\numberwithin{equation}{section}
\newcommand{\Z}{{\mathbb Z}}
\newcommand{\C}{{\mathbb C}}
\newcommand{\be}{\beta}
\newcommand{\gm}{\gamma}
\newcommand{\la}{\langle}
\newcommand{\ra}{\rangle}
\newcommand{\wo}{\widehat{\frak{so}_m}}
\DeclareMathOperator{\Aut}{Aut}
\DeclareMathOperator{\End}{End}
\DeclareMathOperator{\wt}{wt}
\DeclareMathOperator{\Hom}{Hom}
\DeclareMathOperator{\Irr}{Irr}
\newtheorem{thm}{Theorem}[section]
\newtheorem{prop}[thm]{Proposition}
\newtheorem{lem}[thm]{Lemma}
\newtheorem{rmk}[thm]{Remark}
\newtheorem{defn}[thm]{Definition}
\begin{document}

\begin{center}
{\Large \bf  Representations of the orbifold VOAS $L_{\widehat{\frak{sl}_2}}(k,0)^{K}$ and the commutant VOAS $C_{{L_{\widehat{\mathfrak{so}_m}}(1,0)}^{\otimes 3}}({L_{\widehat{\mathfrak{so}_m}}(3,0)})$ }
\end{center}

\begin{center}
{Cuipo Jiang
\footnote{Supported by China NSF grants No.11771281 and No.11531004. email: cpjiang@sjtu.edu.cn.}
and Bing Wang
\footnote{Supported by China NSF grant No.11771281. email: ering123@sjtu.edu.cn.}\\
School of Mathematical  Sciences, Shanghai Jiao Tong University\\
Shanghai 200240, China}
\end{center}

\begin{abstract}
For the Klein group $K$, $k\in\mathbb{Z}_{\geqslant 1}$ and $m\in\mathbb{Z}_{\geqslant 4}$, we study the representations of the orbifold vertex operator algebra $L_{\widehat{\mathfrak{sl}_2}}(k,0)^{K}$ and the commutant vertex operator algebra of $L_{\widehat{\mathfrak{so}_m}}(3,0)$ in $L_{\widehat{\mathfrak{so}_m}}(1,0)^{\otimes 3}$ which can be realized as the orbifold vertex operator subalgebra $L_{\widehat{\mathfrak{sl}_2}}(2m,0)^{K}$ or its extension. All the irreducible modules for $L_{\widehat{\mathfrak{sl}_2}}(k,0)^{K}$ and $C_{{L_{\widehat{\mathfrak{so}_m}}(1,0)}^{\otimes 3}}({L_{\widehat{\mathfrak{so}_m}}(3,0)})$ are classified and constructed explicitly.
\end{abstract}

% \textbf{Keywords}: commutant vertex operator algebra, orbifold vertex operator algebra, irreducible modules

\section{Introduction}

The coset construction, initiated in \cite{GKO85} and \cite{GKO86}, is one of the major ways to construct new vertex algebras from given ones. And the commutant construction was later introduced  in \cite{FZ92} from the point view of vertex operator algebras. Given a vertex algebra $V$ and a subalgebra $U\subseteq V$, it has been proved in \cite{FZ92} that under suitable conditions $C_V(U)$, which is called the commutant of $U$ in $V$,  is a vertex subalgebra of $V$. Since then, describing commutant vertex operator algebras has been one of the most interesting questions in the theory of vertex operator algebras.
The orbifold construction is another important way to construct new vertex algebras from existing ones. From the viewpoint of representations, the orbifold theory is the representation theory for the fixed point vertex operator subalgebra of a vertex operator algebra under the action of a finite automorphism group. Let $V$ be a vertex operator algebra and $G$ a finite group consisting of certain automorphisms of $V$, the fixed point subalgebra $V^G=\{v \in V \mid gv=v, g \in G\}$ is called an orbifold vertex operator subalgebra of $V$.

This paper is prompted by the  results of \cite{JLam19}. Let $\mathfrak{g}$ be a complex finite dimensional simple Lie algebra, and  $L_{\hat{\mathfrak{g}}}(1,0)$ the associated rational and simple affine vertex operator algebra  \cite{DLM97-1}, \cite{FZ92}, \cite{LeL04}, \cite{Li96-1}. For $\ell \in \mathbb{Z}_+$, the tensor product $L_{\hat{\mathfrak{g}}}(1,0)^{\otimes \ell}$ is still rational and the diagonal action of $\hat{\mathfrak{g}}$ on $L_{\hat{\mathfrak{g}}}(1,0)^{\otimes \ell}$ defines a vertex subalgebra $L_{\hat{\mathfrak{g}}}(\ell,0)$ of level $\ell$.
For the simple Lie algebra $\mathfrak{so}_{m}$, the commutant vertex operator algebras $C_{{L_{\widehat{\mathfrak{so}_m}}(1,0)}^{\otimes \ell}}({L_{\widehat{\mathfrak{so}_m}}(\ell,0)})$ for $m\geqslant 4$, $\ell \geqslant 3$ were investigated in \cite{JLam19}.
In particular, the commutant vertex operator algebra $C_{{L_{\widehat{\mathfrak{so}_m}}(1,0)}^{\otimes 3}}({L_{\widehat{\mathfrak{so}_m}}(3,0)})$ can be realized as the obifold vertex operator algebra $L_{\widehat{\mathfrak{sl}_2}}(2m,0)^{K}$ if $m$ is odd and  $(L_{\widehat{\mathfrak{sl}_2}}(2m,0)+L_{\widehat{\mathfrak{sl}_2}}(2m,2m))^{K}$ if $m$ is even, where $K\leqslant\Aut(L_{\widehat{\mathfrak{sl}_2}}(2m,0))$ is the abelian Klein group of order $4$.
In this paper, we classify and construct all the irreducible modules for both the orbifold vertex operator algebras $L_{\widehat{\mathfrak{sl}_2}}(k,0)^{K}$ for $k\geqslant 1$ and  the commutant vertex operator algebras $C_{{L_{\widehat{\mathfrak{so}_m}}(1,0)}^{\otimes 3}}({L_{\widehat{\mathfrak{so}_m}}(3,0)})$ for $m\geqslant 4$. Note that for $m=3$,  $C_{{L_{\widehat{\mathfrak{so}_m}}(1,0)}^{\otimes 3}}({L_{\widehat{\mathfrak{so}_m}}(3,0)})$  has been completely studied in \cite{CL}, \cite{DJL}, \cite{DLY}, \cite{JLin16}, \cite{KLY}, \cite{KMY}.

 It is well known that $L_{\widehat{\mathfrak{sl}_2}}(k,0)$ is a regular and selfdual vertex operator algebra of CFT type for $k\in\Z_{+}$ \cite{FZ92}, \cite{LeL04}. Let $G$ be a finite solvable automorphism group of $L_{\widehat{\mathfrak{sl}_2}}(k,0)$, then $L_{\widehat{\mathfrak{sl}_2}}(k,0)^G$ is also a regular and selfdual vertex operator algebra of CFT type \cite{CM}, \cite{M15}. It follows from \cite{DRX17} that any irreducible $L_{\widehat{\mathfrak{sl}_2}}(k,0)^G$-module occurs in an irreducible $g$-twisted $L_{\widehat{\mathfrak{sl}_2}}(k,0)$-module for some $g \in G$.
  Let $G=K$ be the Klein subgroup of $\Aut(L_{\widehat{\mathfrak{sl}_2}}(k,0))$. We first construct twisted $\sigma$-modules of $L_{\widehat{\mathfrak{sl}_2}}(k,0)$ for every $\sigma\in K$. Then by computing the associated twisted algebras introduced in \cite{DRX17}, we
   classify and construct all the irreducible modules for  the orbifold vertex operator algebra $L_{\widehat{\mathfrak{sl}_2}}(k,0)^{K}$, $k\in\Z_{+}$. It turns out
   that there are $\frac{11(k+1)}{2}$ inequivalent irreducible $L_{\widehat{\mathfrak{sl}_2}}(k,0)^K$-modules if $k$ is odd and $\frac{11k+32}{2}$ inequivalent irreducible $L_{\widehat{\mathfrak{sl}_2}}(k,0)^{K}$-modules if $k$ is even. We then deduce that  $C_{{L_{\widehat{\mathfrak{so}_m}}(1,0)}^{\otimes 3}}({L_{\widehat{\mathfrak{so}_m}}(3,0)})$ has $11m+16$ inequivalent irreducible modules  for  $m\in 2\Z_{\geqslant 2}+1$, and has $8m+32$ inequivalent irreducible modules if $m\in 2\Z_{\geqslant 2}$.

 The paper is organized as follows. In Section 2, we briefly review some basic notations and facts on vertex operator algebras. In Section 3, we first give the action of the Klein group $K$ on $L_{\widehat{\mathfrak{sl}_{2}}}(k,0)$ and realize each element of $K$ as an inner automorphism of $\mathfrak{sl}_2$. We next review the construction of the affine vertex operator algebra $L_{\widehat{\mathfrak{sl}_2}}(2m,0)$ by using the fermionic vertex superalgebras. In Section 4, we classify and construct all the irreducible modules of the orbifold vertex operator algebras $L_{\widehat{\mathfrak{sl}_2}}(k,0)^{K}$ for $k\geqslant 1$ and the commutant vertex operator algebras $C_{{L_{\widehat{\mathfrak{so}_m}}(1,0)}^{\otimes 3}}({L_{\widehat{\mathfrak{so}_m}}(3,0)})$ for $m\geqslant 4$.

We use the usual symbols $\C$ for the complex numbers, $\Z$ for the integers, and $\Z_{+}$ for the positive integers.

\section{Preliminary}

Let $(V,Y,\mathbbm{1},\omega)$ be a vertex operator algebra \cite{Bor86}, \cite{FLM88}. We first review basics from \cite{DLM98-1}, \cite{DLM00}, \cite{FHL93} and \cite{LeL04}. Let $g$ be an automorphism of the vertex operator algebra $V$ of finite order $T$. Denote the decomposition of $V$ into eigenspaces of $g$ as:
\[ V=\bigoplus_{r\in \mathbb{Z}/T\mathbb{Z}}V^r,\]
where $V^r= \{v \in V |gv = e^{-2\pi \sqrt{-1}\frac{r}{T}}v\}$, $0 \leqslant r \leqslant T-1$. We use $r$ to denote both an integer between $0$ and $T-1$ and its residue class modulo $T$ in this situation.

\begin{defn}

Let $V$ be a vertex operator algebra. A weak $g$-twisted $V$-module is a vector space $M$ equipped with a linear map
\begin{align*}
Y_M(\cdot, x) : V & \longrightarrow (\End M)[[x^{\frac{1}{T}},x^{-\frac{1}{T}}]] \\
                v & \longmapsto Y_M(v,x) = \sum_{n\in \frac{1}{T}\mathbb{Z}}v_nx^{-n-1},
\end{align*}
 where $v_n \in \End M $, satisfying the following conditions for $0 \leqslant r \leqslant T-1$, $u \in V^r, v \in V$, $w \in M$:
\[Y_M(u,x) = \sum_{n\in \frac{r}{T}+\mathbb{Z}}u_nx^{-n-1},\]
\[u_sw=0 \quad\text{for} \quad s \gg 0,\]
\[Y_M(\mathbbm{1},x) = id_M,\]
\[x_{0}^{-1}\delta(\frac{x_1-x_2}{x_0})Y_M(u,x_1)Y_M(v,x_2)-x_{0}^{-1}\delta(\frac{x_2-x_1}{-x_0})Y_M(v,x_2)Y_M(u,x_1)\]
\[=x_{2}^{-1}(\frac{x_1-x_0}{x_2})^{-\frac{r}{T}}\delta(\frac{x_1-x_0}{x_2})Y_M(Y(u,x_0)v,x_2),\]
where $\delta(x)=\sum_{n\in\mathbb{Z}}x^n$ and all binomial expressions are to be expanded in nonnegative integral powers of the second variable.
\end{defn}

The following Borcherds identities can be derived from the twisted-Jacobi identity \cite{DLM98-1}, \cite{XX98}.

\begin{equation}
[ u_{m + \frac{r}{T}}, v_{n + \frac{s}{T}} ] = \sum_{i=0}^{\infty} \binom{m + \frac{r}{T}}{i}(u_iv)_{m + n + \frac{r+s}{T}-i},
\label{Borcherds identity 1}
\end{equation}

\begin{equation}
\sum_{i=0}^{\infty} \binom{\frac{r}{T}}{i}(u_{m+i}v)_{n+\frac{r+s}{T}-i} = \sum_{i=0}^{\infty} (-1)^i \binom{m}{i} (u_{m+\frac{r}{T}-i}v_{n+\frac{s}{T}+i} -(-1)^mv_{m+n+\frac{s}{T}-i}u_{\frac{r}{T}+i}),
\label{Borcherds identity 2}
\end{equation}
where $u \in V^r$, $v \in V^s$, $m$, $n \in \mathbb{Z}$.

\begin{defn}
An admissible $g$-twisted $V$-module is a weak $g$-twisted $V$-module which carries a $\frac{1}{T}\mathbb{Z}_+$-grading $M = \oplus_{n\in \frac{1}{T}\mathbb{Z}_+}M(n)$ satisfying $v_mM(n) \subseteq M(n+r-m-1)$ for homogeneous $v \in V_r$, $m$, $n \in\frac{1}{T}\mathbb{Z}$.
\end{defn}

\begin{defn}
A $g$-twisted $V$-module is a weak $g$-twisted $V$-module which carries a $\mathbb{C}$-grading:
\[M=\oplus_{ \lambda \in \mathbb{C}}M_\lambda,\]
such that dim $M_{\lambda} < \infty$, $M_{\lambda +\frac{n}{T}} = 0$ for fixed $\lambda$ and $n \ll 0$, $L(0)w = \lambda w = ( \wt w )w$ for $w \in M_{\lambda}$, where $L(0)$ is the component operator of $Y_M(\omega, x) = \sum_{n\in \mathbb{Z}}L(n)x^{-n-2}$.
\end{defn}

\begin{rmk}
If $g = id_V$, we have the notations of weak, admissible and ordinary $V$-modules \cite{DLM97-1}.
\end{rmk}

\begin{defn}
A vertex operator algebra $V$ is called $g$-rational if the admissible $g$-twisted $V$-module category is semisimple. $V$ is called rational if $V$ is $id_V$-rational.
\end{defn}

If $M = \oplus_{n\in \frac{1}{T}\mathbb{Z}_+}M(n)$ is an admissible $g$-twisted $V$-module, the contragredient module $M'$ is defined as follows:
\[ M' = \oplus_{n\in \frac{1}{T}\mathbb{Z}_+}M(n)^{*},\]
where $M(n)^{*} = \rm{Hom}_{\mathbb{C}}(M(n), \mathbb{C})$. The vertex operator $Y_{M'}(a, z)$ is defined for $a \in V$ via
\[ \langle Y_{M'}(a, z)f, u \rangle = \langle f, Y_M(e^{zL(1)}(-z^{-2})^{L(0)}a, z^{-1})u \rangle, \]
where $\langle f, u \rangle = f(u)$ is the natural pairing $M' \times M \to \mathbb{C}$. It follows from \cite{FHL93} and \cite{XF00} that $( M' , Y_{M'} )$ is an admissible $g^{-1}$-twisted $V$-module. We can also define the contragredient module $M'$ for a $g$-twisted $V$-module $M$. In this case, $M'$ is a $g^{-1}$-twisted $V$-module. Moreover, $M$ is irreducible if and only if $M'$ is irreducible. $M$ is said to be selfdual if $M$ is $V$-isomorphic to $M'$. In particular, $V$ is said to be a selfdual vertex operator algebra if $V$ is isomorphic to $V'$.
We recall the following  concept from \cite{Zhu96}.
\begin{defn}
A vertex operator algebra is called $C_2$-cofinite if $C_2(V)$ has finite codimension (i.e., dim $V/C_2(V) < \infty$),
where $C_2(V) = \langle u_{-2}v \mid u, v \in V \rangle$.
\end{defn}

We have the following result from \cite{ABD04}, \cite{DLM98-1} and \cite{Zhu96}.

\begin{thm}
If $V$ is a vertex operator algebra satisfying the $C_2$-cofinite property, then $V$ has only finitely many irreducible admissible modules up to isomorphism. The rationality of $V$ also implies the same result.
\end{thm}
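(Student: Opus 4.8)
The plan is to handle both assertions through a single mechanism, Zhu's associative algebra $A(V)=V/O(V)$. Recall from \cite{Zhu96} (see also \cite{FZ92}) that $A(V)$ is an associative $\C$-algebra and that the assignment $M\mapsto M(0)$, sending an admissible $V$-module to its lowest graded piece, induces a bijection between isomorphism classes of irreducible admissible $V$-modules and isomorphism classes of irreducible $A(V)$-modules. Since any finite-dimensional associative algebra over $\C$ has only finitely many irreducible modules up to isomorphism --- they are indexed by the simple factors of the semisimple quotient $A(V)/\mathrm{rad}\,A(V)$ --- it suffices, in each of the two cases, to prove that $A(V)$ is finite-dimensional.

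For the rationality statement I would invoke the structure theory developed in \cite{DLM98-1} (building on \cite{Zhu96}): if $V$ is rational, then $A(V)$ is a finite-dimensional semisimple associative algebra. Finite-dimensionality is all that the reduction above requires, so Zhu's correspondence immediately yields finitely many irreducible admissible $V$-modules.

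For the $C_2$-cofinite statement the goal is again to deduce $\dim A(V)<\infty$, this time from $\dim V/C_2(V)<\infty$. Here I would use the spanning-set results of \cite{ABD04}: fixing homogeneous vectors $u^1,\dots,u^r$ whose images span $V/C_2(V)$, one shows that $V$ is spanned by the monomials $u^{i_1}_{-n_1}\cdots u^{i_s}_{-n_s}\mathbbm{1}$ with $n_1\geqslant n_2\geqslant\cdots\geqslant n_s\geqslant 1$. Organizing $V$ by the length $s$ of such monomials produces a filtration whose associated graded is a commutative algebra generated by the finitely many images of the $u^i$, hence finitely generated. Transporting this filtration to $A(V)=V/O(V)$ and using the defining relations of $O(V)$ (notably $u\circ v=\sum_{i\geqslant 0}\binom{\wt u}{i}u_{i-2}v\in O(V)$) to reduce every mode $u_{-n}$ with $n\geqslant 2$ to combinations of lower-filtration terms shows that $\mathrm{gr}\,A(V)$ is a quotient of this finitely generated commutative algebra with the mode indices confined to a bounded range; this forces $\dim A(V)<\infty$, and the reduction above again applies.

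The main obstacle is the second part: passing from $C_2$-cofiniteness to $\dim A(V)<\infty$. The delicate point is controlling the interaction between the PBW-type spanning set coming from $V/C_2(V)$ and the relations in $O(V)$, so that the induced filtration on $A(V)$ really does have a finitely generated --- and in fact finite-dimensional --- associated graded. This is exactly where the quantitative spanning-set estimates of \cite{ABD04} (and the difference-condition refinements behind them) are needed; everything else is a formal consequence of Zhu's correspondence.
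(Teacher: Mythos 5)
The paper does not actually prove this theorem: it is quoted directly from \cite{Zhu96}, \cite{DLM98-1} and \cite{ABD04}, so your proposal can only be measured against the standard arguments in those references. Your overall frame --- reduce both assertions to $\dim A(V)<\infty$ via Zhu's bijection between irreducible admissible $V$-modules and irreducible $A(V)$-modules --- is exactly the literature's route, and the rational half is fine: $g$-rationality implies $A_g(V)$ is finite-dimensional semisimple (\cite{DLM98-1}, with $g=\mathrm{id}$), and a finite-dimensional algebra has finitely many irreducibles.

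The $C_2$-cofinite half, however, has a genuine gap at its concluding inference. Eliminating the modes $u_{-n}$, $n\geqslant 2$, via the relations defining $O(V)$ shows at best that $A(V)$ (or its associated graded) is a quotient of a \emph{finitely generated} commutative algebra on the images of $u^1_{-1},\dots,u^r_{-1}$; but ``finitely generated with mode indices confined to a bounded range'' does not force finite-dimensionality --- nothing in your argument bounds the length $s$ of the monomials $u^{i_1}_{-1}\cdots u^{i_s}_{-1}\mathbbm{1}$. Indeed, the Zhu algebra of the rank-one Heisenberg vertex operator algebra is the polynomial ring $\C[x]$, generated by the single element $h_{-1}\mathbbm{1}$ whose mode index is $-1$, yet infinite-dimensional; since your argument uses the hypothesis $\dim V/C_2(V)<\infty$ only to secure finitely many generators, it would ``prove'' that $\C[x]$ is finite-dimensional. (Your intermediate claim is also off: the length-graded algebra of $V$ itself needs all the generators $u^i_{-n}$, $n\geqslant 1$, not just finitely many.) The correct mechanism is a surjection of \emph{vector spaces}, not merely of algebras: filter $A(V)$ by conformal weight, $F_pA(V)=\mathrm{im}\bigl(\oplus_{n\leqslant p}V_n\bigr)$. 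From $u\circ v=\sum_{i\geqslant 0}\binom{\wt u}{i}u_{i-2}v\in O(V)$ one gets
\begin{equation*}
u_{-2}v\equiv-\sum_{i\geqslant 1}\binom{\wt u}{i}u_{i-2}v \pmod{O(V)},
\end{equation*}
and the right-hand side has strictly smaller weight, so every element of $C_2(V)$ dies in $\mathrm{gr}\,A(V)$; hence the natural map $V\to\mathrm{gr}\,A(V)$ factors through a surjection of the finite-dimensional space $V/C_2(V)$ onto $\mathrm{gr}\,A(V)$, giving $\dim A(V)\leqslant\dim V/C_2(V)<\infty$. This quantitative step is precisely what your sketch defers to \cite{ABD04} rather than proves, and it is where the hypothesis genuinely enters.
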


We have the following results from \cite{DLM98-1} and \cite{DLM00}.

\begin{thm}
If V is $g$-rational vertex operator algebra, then

(1) Any irreducible admissible $g$-twisted $V$-module $M$ is a $g$-twisted $V$-module. Moreover, there exists a number $\lambda \in \mathbb{C}$ such that $M=\oplus_{n\in \frac{1}{T}\mathbb{Z}_+}M_{\lambda + n}$, where $M_{\lambda} \ne 0$. The number $\lambda$ is called the conformal weight of $M$;

(2)There are only finitely many irreducible admissible $g$-twisted $V$-modules up to isomorphism.
\end{thm}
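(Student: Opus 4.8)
The plan is to pass to the $g$-twisted Zhu algebra $A_g(V)$ and reduce both statements to its structure theory. Recall that $A_g(V) = V/O_g(V)$ is an associative algebra whose product encodes the action of the weight-preserving (zero-mode) operators on the lowest graded piece of an admissible $g$-twisted module. The first step is to set up, or invoke from the twisted Zhu theory, the two basic functors: for an admissible $g$-twisted module $M = \oplus_{n \in \frac{1}{T}\Z_+} M(n)$, sending $M \mapsto M(0)$ produces an $A_g(V)$-module, while conversely each $A_g(V)$-module $U$ gives rise to a generalized Verma-type admissible $g$-twisted module $\bar M(U)$ with $\bar M(U)(0) \cong U$, together with its unique irreducible quotient $L(U)$. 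These functors restrict to a bijection between isomorphism classes of irreducible admissible $g$-twisted $V$-modules and isomorphism classes of irreducible $A_g(V)$-modules, and this bijection is the scaffold for everything that follows.

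The heart of the argument is to show that $g$-rationality forces $A_g(V)$ to be a finite-dimensional semisimple associative algebra. The semisimple part, once finite-dimensionality is known, is easy: given any $A_g(V)$-module $U$, the admissible module $\bar M(U)$ decomposes as a direct sum of irreducibles by $g$-rationality, and reading off bottom levels exhibits $U$ as a direct sum of irreducible $A_g(V)$-modules; thus every module over the finite-dimensional algebra $A_g(V)$ is completely reducible, which is equivalent to the vanishing of its Jacobson radical. The main obstacle is establishing finite-dimensionality of $A_g(V)$: this cannot be deduced from complete reducibility alone, since a semisimple algebra may have infinitely many irreducibles, and I expect it to require the genuinely hard input, namely using $g$-rationality to bound simultaneously the number and the dimensions of the irreducible $A_g(V)$-modules. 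One natural route is to show that an infinite-dimensional $A_g(V)$ would produce an admissible $g$-twisted module failing to decompose as a direct sum of the ordinary modules $L(U)$, contradicting semisimplicity of the admissible module category. This is where the real content lies.

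Granting that $A_g(V)$ is finite-dimensional and semisimple, both conclusions follow quickly. For (2), a finite-dimensional semisimple algebra has only finitely many isomorphism classes of irreducible modules, so by the bijection above there are only finitely many irreducible admissible $g$-twisted $V$-modules up to isomorphism. For (1), let $M$ be irreducible admissible with bottom level $M(0)$, an irreducible and hence finite-dimensional $A_g(V)$-module. The operator $L(0)$ preserves the admissible grading and, because the image of $\omega$ is central in $A_g(V)$, it commutes with the whole $A_g(V)$-action on $M(0)$; Schur's lemma then gives $L(0)|_{M(0)} = \lambda\,\mathrm{id}$ for a single $\lambda \in \C$. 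Since $M$ is generated by $M(0)$ and each mode $v_m$ shifts the $L(0)$-eigenvalue by $\wt v - m - 1 \in \frac{1}{T}\Z$, one propagates this to obtain $M(n) = M_{\lambda + n}$ for all $n \in \frac{1}{T}\Z_+$, with each weight space finite-dimensional because $V$ has finite-dimensional homogeneous components. This identifies the admissible grading with the $L(0)$-eigenspace decomposition, exhibits $M$ as an ordinary $g$-twisted module, and shows $\lambda$ is its conformal weight.
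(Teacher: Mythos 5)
First, a point of reference: the paper contains no proof of this statement at all; it is quoted as a known result from \cite{DLM98-1} and \cite{DLM00}, so the only meaningful comparison is with the Dong--Li--Mason proof in those references, and your scaffolding (the twisted Zhu algebra $A_g(V)$, the functors $M \mapsto M(0)$ and $U \mapsto \overline{M}(U)$, $L(U)$, and the bijection on irreducibles) is indeed the route taken there. The problem is that your proposal stops exactly where the content of the theorem lies, and the surrounding discussion misidentifies what is missing. You write that finiteness ``cannot be deduced from complete reducibility alone, since a semisimple algebra may have infinitely many irreducibles''; this is false for unital rings. If every $A_g(V)$-module is semisimple, then $A_g(V)$ is semisimple as a left module over itself, and since $1$ lies in a finite sub-sum of simple left ideals, $A_g(V)$ is a \emph{finite} direct sum of simple left ideals, hence semisimple Artinian; Wedderburn then already gives finitely many irreducible modules, which is statement (2), with no finite-dimensionality over $\mathbb{C}$ needed. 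What complete reducibility genuinely does not give is that the Wedderburn division algebras are $\mathbb{C}$ (equivalently, that $A_g(V)$ and its simple modules are finite-dimensional over $\mathbb{C}$). In \cite{DLM98-1} this is supplied not by the vague contradiction argument you gesture at, but by a countability argument: $V$ has countable dimension (its homogeneous pieces are finite-dimensional), hence so do $A_g(V)$ and any simple $A_g(V)$-module, so Schur's lemma in Dixmier's form applies and the division algebras collapse to $\mathbb{C}$. The same countable-dimension Schur's lemma is what legitimizes your step that $L(0)=o(\omega)$ acts as a scalar $\lambda$ on $M(0)$. As written, your proof of both (1) and (2) rests on a claim you explicitly leave unproved (``granting that\ldots''), so it is incomplete at its core.

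There is a second, unacknowledged gap in your part (1). Your closing assertion that each weight space is ``finite-dimensional because $V$ has finite-dimensional homogeneous components'' does not hold up. By irreducibility, $M(n)$ is spanned by $\{\,v_{\wt v -1-n}\,w \;:\; v\in V \text{ homogeneous},\ w\in M(0)\,\}$, and homogeneous vectors $v$ of \emph{every} weight $k\in\mathbb{Z}_{\geqslant 0}$ contribute modes landing in $M(n)$; finite-dimensionality of each $V_k$ therefore yields only a countable spanning set for $M(n)$, not a finite one. That $\dim M(n)<\infty$ (i.e., that an irreducible admissible $g$-twisted module is ordinary) is a genuine theorem which again uses $g$-rationality --- it is precisely part of what the cited result of \cite{DLM98-1} asserts --- and it cannot be obtained from the grading of $V$ alone. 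So your proposal has the right architecture, matching the cited source, but both of the hard steps (finite-dimensionality of $A_g(V)$ and of the graded pieces of $M$) are either deferred or justified by an argument that fails.
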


\begin{defn}
A vertex operator algebra $V$ is called regular if every weak $V$-module is a direct sum of irreducible $V$-modules, i.e., the weak module category is semisimple.
\end{defn}

\begin{defn}
A vertex operator algebra $V=\oplus_{n\in \mathbb{Z}}V_n$ is said to be of CFT type if $V_n = 0$ for $n < 0$ and $V_0 = \mathbb{C}\mathbbm{1}$.
\end{defn}

\begin{rmk}
It is proved in \cite{ABD04}  that for a  CFT type vertex operator algebra $V$,  regularity is equivalent to rationality and $C_2$-cofiniteness.
\end{rmk}

Next, we recall the notation of the commutant vertex operator algebra following \cite{FZ92} and \cite{LeL04}.

Let $(V, Y, \mathbbm{1}, \omega)$ be a vertex operator algebra of central charge $c_V$ and $(U, Y, \mathbbm{1}, \omega')$ a vertex operator subalgebra of $V$ of central charge $c_U$. Set
\[C_V(U) = \{ v\in V \mid  [Y(u,x_1), Y(v,x_2)] = 0,  \forall  u \in U \}.\]
If $\omega' \in U\cap V_2$ and $L(1)\omega' = 0$, then $(C_V(U), Y, \mathbbm{1}, \omega - \omega')$ is a vertex operator subalgebra of $V$ of central charge $c_V-c_U$ which is called the \emph{commutant} of $U$ in $V$.

 We now review some notations and  facts about the action of the automorphism group on twisted modules of vertex operator algebra $V$ from \cite{DLM00}, \cite{DRX17}, \cite{DY02}, \cite{MT04}.

Let $g, h$ be two automorphisms of $V$. If $(M, Y_M)$ is a weak $g$-twisted $V$-module, there is a weak $h^{-1}gh$-twisted $V$-module $(M \circ h, Y_{M \circ h})$ where $M \circ h = M$ as vector spaces and $Y_{M \circ h}(v, z) = Y_M(hv, z)$ for $v \in V$. This gives a right action of $\Aut(V)$ on weak twisted $V$-modules. Symbolically, we write
\[  (M, Y_M) \circ h = (M \circ h,Y_{M \circ h}) = M \circ h. \]
The $V$-module $M$ is called \emph{$h$-stable} if $M \circ h$ and $M$ are isomorphic $V$-modules.

Let $G$ be a finite group of automorphisms of $V$, $g \in G$ of finite order $T$ and $M = (M, Y_M)$ an irreducible $g$-twisted $V$-module. Define a subgroup $G_M$ of $G$ consisting all of $h \in G$ such that $M$ is $h$-stable. For $h \in G_{M}$, there is a linear isomorphism $\phi(h) : M \to M$ satisfying
\[ \phi(h)Y_M(v, z){\phi(h)}^{-1} = Y_{M \circ h}(v, z) = Y_M(hv, z)  \]
for $v \in V$. The simplicity of $M$ together with Schur's lemma shows that $h \mapsto \phi(h)$ is a projective representation of $G_M$ on $M$. Let $\alpha_M$ be the corresponding $2$-cocycle in $C^2(G, \mathbb{C}^*)$. Then $M$ is a module for the twisted group algebra $\mathbb{C}^{\alpha_M}[G_M]$ which is  a semisimple associative algebra. A basic fact is that $g$ belongs to $G_M$. Let $M^r=\oplus_{n\in \frac{r}{T}+\mathbb{Z}_+}M(n)$ for $r = 0$, $1$, $\cdots$, $T-1$, then $M = \oplus_{n\in \frac{1}{T}\mathbb{Z}_+}M(n) = \oplus_{r=0}^{T-1}M^r$ and each $M^r$ is an irreducible $V^{\langle g \rangle}$-module on which  $\phi(g)$ acts as constant $e^{2\pi \sqrt{-1}\frac{r}{T}}$ \cite{DM97}, \cite{DRX17}.

Let $\Lambda_{G_M, \alpha_M}$ be the set of all irreducible characters $\lambda$ of $\mathbb{C}^{\alpha_M}[G_M]$. Then
\begin{equation}
M = \oplus_{\lambda \in \Lambda_{G_M, \alpha_M}}W_{\lambda} \otimes M_{\lambda},
\end{equation}
where $W_{\lambda}$ is the simple $\mathbb{C}^{\alpha_M}[G_M]$-module affording $\lambda$ and $M_{\lambda} = \Hom_{\mathbb{C}^{\alpha_M}[G_M]}(W_{\lambda}, M)$ is the mulitiplicity of $W_{\lambda}$ in $M$. And each $M_{\lambda}$ is a module for the vertex operator subalgebra $V^{G_M}$.

The following results follow from \cite{DRX17} and \cite{DY02}.

\begin{thm}
\label{DRX17 thm1}
With the same notations as above we have

(1) $W_{\lambda} \otimes M_{\lambda}$ is nonzero for any $\lambda \in \Lambda_{G_M, \alpha_M}$.

(2) Each $M_{\lambda}$ is an irreducible $V^{G_M}$-module.

(3) $M_{\lambda}$ and $M_{\mu}$ are equivalent $V^{G_M}$-module if and only if $\lambda = \mu$.
\end{thm}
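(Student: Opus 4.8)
The plan is to recognize the stated decomposition as an instance of the classical double-centralizer (Schur--Weyl) theorem, applied to a commuting pair of algebras acting on $M$, and to show that this pair forms a dual pair. Write $\mathcal{A}$ for the associative subalgebra of $\End M$ generated by all the twisted modes $\{v_n : v\in V,\ n\in \frac1T\Z\}$, let $\mathcal{B}\subseteq \mathcal{A}$ be the subalgebra generated by the modes of $V^{G_M}$, and let $A$ denote the image of $\C^{\alpha_M}[G_M]$ under $h\mapsto \phi(h)$. First I would record the elementary consequences of the intertwining relation $\phi(h)Y_M(v,z)\phi(h)^{-1}=Y_M(hv,z)$: conjugation by $\phi(h)$ sends $v_n$ to $(hv)_n$, so it defines a $G_M$-action on $\mathcal{A}$; and because $hv=v$ for $v\in V^{G_M}$, every element of $A$ commutes with every element of $\mathcal{B}$. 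Since $\C^{\alpha_M}[G_M]$ is a finite-dimensional semisimple associative algebra (as already noted in the excerpt), $M$ becomes an $(A,\mathcal{B})$-bimodule to which double-centralizer theory should apply.

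The heart of the argument, and the step I expect to be the main obstacle, is to promote this commutation into a genuine double-centralizer relation $\mathcal{B}=\End_A(M)$ and $A=\End_{\mathcal{B}}(M)$. One inclusion is the commutativity just noted; the reverse inclusions rest on the fact that $M$ is \emph{irreducible} as a $g$-twisted $V$-module, equivalently as a module over the full mode algebra $\mathcal{A}$, so that $\End_{\mathcal{A}}(M)=\C$ and $\mathcal{A}$ is dense in $\End M$ in the graded sense (Jacobson density applied degree by degree, each graded piece being finite-dimensional). Using the central idempotents of $\C^{\alpha_M}[G_M]$ to project each mode $v_n$ onto its $G_M$-isotypic components, one sees that these components together with $A$ regenerate all of $\mathcal{A}$; this is what forces the commutant of $A$ to be exactly $\mathcal{B}$ and, symmetrically, the commutant of $\mathcal{B}$ to be exactly $A$. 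A separate but essential ingredient is the \emph{faithfulness} of the $A$-action, i.e.\ the linear independence of $\{\phi(h):h\in G_M\}$ in $\End M$; this is an Artin-type ``linear independence of automorphisms'' statement in the vertex-algebra setting, established through quantum Galois theory and its twisted analogue in \cite{DRX17}, and it is precisely what prevents the isotypic decomposition from degenerating.

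Granting the dual-pair relation, the three assertions follow from Wedderburn theory. Because $A$ is finite-dimensional semisimple and acts faithfully on $M$ with $\mathcal{B}=\End_A(M)$, the bimodule splits canonically as
\[
M=\bigoplus_{\lambda\in\Lambda_{G_M,\alpha_M}} W_\lambda\otimes M_\lambda,\qquad M_\lambda=\Hom_{\C^{\alpha_M}[G_M]}(W_\lambda,M),
\]
with each $M_\lambda$ naturally a $\mathcal{B}$-module. Faithfulness of the $A$-action guarantees that every simple $A$-module $W_\lambda$ occurs, which is (1) $W_\lambda\otimes M_\lambda\ne 0$ for all $\lambda$. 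The double-centralizer property $A=\End_{\mathcal{B}}(M)$ makes each multiplicity space $M_\lambda$ an irreducible $\mathcal{B}$-module, that is, an irreducible $V^{G_M}$-module, giving (2); and it identifies distinct isotypic blocks with inequivalent simple $\mathcal{B}$-modules, yielding (3) $M_\lambda\cong M_\mu$ as $V^{G_M}$-modules if and only if $\lambda=\mu$.

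Finally, I would emphasize that all the vertex-algebraic content is concentrated in the density and faithfulness step; everything else is formal. The cyclic case $G_M=\langle g\rangle$ is already available from the discussion preceding the theorem, where each $M^r\ne 0$ and $\phi(g)$ acts on $M^r$ as the scalar $e^{2\pi\sqrt{-1}\frac{r}{T}}$ after \cite{DM97} and \cite{DRX17}; this both seeds the general argument and confirms independently that the decomposition cannot collapse.
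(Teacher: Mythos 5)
The paper does not prove this theorem at all: it is quoted directly from \cite{DRX17} and \cite{DY02} (the latter literally titled ``generalized double and dual pairs''), and the proofs in those references proceed by exactly the dual-pair/double-centralizer strategy you outline, with the density theorem and the faithfulness (linear independence of the $\phi(h)$) as the hard technical input. Your sketch therefore follows essentially the same route as the paper's source; the step you flag as the main obstacle --- upgrading commutation to a genuine double-centralizer relation --- is indeed the nontrivial content, and since you correctly locate it in the quantum Galois / twisted density results of \cite{DM97} and \cite{DRX17} rather than claiming to derive it formally, your proposal is as complete as the citation-level treatment in the paper itself.
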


\begin{thm}
\label{DRX17 thm2}
Let $g$, $h \in G$, $M$ be an irreducible $g$-twisted $V$-module, and $N$ an irreducible $h$-twisted $V$-module. If $M$, $N$ are not in the same orbit under the action of $G$, then the irreducible $V^G$-modules $M_{\lambda}$ and $N_{\mu}$ are inequivalent for any $\lambda \in \Lambda_{G_M, \alpha_M}$ and $\mu \in \Lambda_{G_N, \alpha_N}$.
\end{thm}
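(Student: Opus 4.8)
The plan is to reduce the statement to a single vanishing assertion: it suffices to show that
\[
\Hom_{V^G}(M,N)=0
\]
whenever the irreducible $g$-twisted module $M$ and the irreducible $h$-twisted module $N$ lie in distinct $G$-orbits, both being regarded as (untwisted) $V^G$-modules by restriction. To see that this suffices, recall from the decomposition $M=\oplus_{\lambda}W_{\lambda}\otimes M_{\lambda}$ that, as a $V^G$-module, $M$ is a direct sum of copies of the $M_{\lambda}$; in particular each $M_{\lambda}$ is a $V^G$-direct summand of $M$, and likewise each $N_{\mu}$ is a $V^G$-direct summand of $N$. Hence a hypothetical $V^G$-isomorphism $M_{\lambda}\cong N_{\mu}$ would produce a nonzero element of $\Hom_{V^G}(M,N)$ by composing the projection $M\twoheadrightarrow M_{\lambda}$, the isomorphism, and the inclusion $N_{\mu}\hookrightarrow N$. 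Thus the vanishing of this Hom-space across orbits forces $M_{\lambda}\not\cong N_{\mu}$ for all $\lambda,\mu$.

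To prove the vanishing I would bring in the action of the whole algebra $V$, not merely of $V^G$, together with the $g$-rationality and $C_2$-cofiniteness available for every $g\in G$. The natural device is an induction functor with a Frobenius reciprocity, formulated inside the $G$-graded category $\bigsqcup_{k\in G}\{k\text{-twisted }V\text{-modules}\}$: to an irreducible $V^G$-module $X$ one would attach an induced object $\Ind(X)$ in this category satisfying
\[
\Hom_{V^G}(X,\Res P)\cong\Hom(\Ind(X),P)
\]
for every twisted module $P$. Taking $X=M_{\lambda}$ and using the embedding $M_{\lambda}\hookrightarrow M$, this identifies $M$ as an irreducible constituent of $\Ind(M_{\lambda})$. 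The decisive structural point to establish is that the irreducible constituents of $\Ind(M_{\lambda})$ are exactly the members $\{M\circ k:k\in G\}$ of the single $G$-orbit of $M$. Once this is known, a $V^G$-isomorphism $M_{\lambda}\cong N_{\mu}$ gives $\Ind(M_{\lambda})\cong\Ind(N_{\mu})$, so $N$ would appear among these constituents and therefore lie in the orbit of $M$, contrary to hypothesis; equivalently $\Hom_{V^G}(M,N)=0$ across orbits.

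The hard part is exactly the construction of $\Ind(M_{\lambda})$ as a genuine twisted $V$-module and the identification of its constituents with a single orbit. Here rationality is indispensable: it ensures that the ambient twisted-module categories are semisimple with only finitely many simple objects, so that $\Ind(M_{\lambda})$ is a finite direct sum and the multiplicity count closes up. Concretely, I would carry this out by adapting the averaging and projection technique of Dong--Mason quantum Galois theory to the twisted setting as in \cite{DRX17} and \cite{DY02}: form the idempotents cutting out the $V^G$-isotypic components, keep track of the $\tfrac{1}{T}\mathbb{Z}$-graded conformal weights in each twisted sector, and use the twisted Jacobi identity---equivalently the Borcherds identities \eqref{Borcherds identity 1} and \eqref{Borcherds identity 2}---to show that vectors lying in non-conjugate $g$-twisted sectors cannot be matched by any $V^G$-homomorphism. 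This last compatibility check between the fractional gradings of the two sectors is where I expect the real work to lie.
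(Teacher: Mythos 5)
First, note that the paper does not prove this theorem: it is quoted verbatim from \cite{DRX17} and \cite{DY02}, so the benchmark is the proofs in those references, which proceed via a Schur--Weyl (dual pair) theorem for the ``generalized double'' acting on a $G$-stable sum of twisted modules. Your opening reduction is fine: since $M=\oplus_\lambda W_\lambda\otimes M_\lambda$ and $N=\oplus_\mu W_\mu\otimes N_\mu$ as $V^G$-modules, an isomorphism $M_\lambda\cong N_\mu$ is equivalent to $\Hom_{V^G}(M,N)\neq 0$. The problem is what comes next. Granting your induction functor and Frobenius reciprocity $\Hom_{V^G}(X,\Res P)\cong\Hom(\Ind(X),P)$, your ``decisive structural point'' --- that the irreducible constituents of $\Ind(M_\lambda)$ are exactly $\{M\circ k : k\in G\}$ --- is, by that very reciprocity, \emph{equivalent} to the theorem: $P$ is a constituent of $\Ind(M_\lambda)$ iff $M_\lambda$ embeds in $\Res P$, so asserting that the constituents form a single orbit is precisely asserting that $M_\lambda$ occurs only in twisted modules from the orbit of $M$. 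Your argument is therefore circular unless this point is established by independent means, and it is not: you defer it to ``adapting the averaging and projection technique.'' Moreover, the existence of $\Ind$ with reciprocity valued in the $G$-graded twisted category is itself a substantial theorem, available only under hypotheses (rationality and $C_2$-cofiniteness of $V^G$, hence solvability of $G$ together with regularity of $V$) that the statement being proved does not assume.

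Second, the mechanism you propose for closing the gap --- that vectors in non-conjugate twisted sectors ``cannot be matched'' because of the incompatibility of the fractional gradings --- provably cannot work, and this paper itself supplies the counterexample. Since $K$ is abelian, the $K$-orbit of a $\sigma_1$-twisted module contains only $\sigma_1$-twisted modules, so $\overline{L(k,i)}^{\sigma_1}$ and $\overline{L(k,i)}^{\sigma_2}$ are never in the same orbit; yet by Lemma \ref{twisted l.w.} they have the same lowest weight $\frac{i(i-k)}{4(k+2)}+\frac{k}{16}$, both are graded by the same coset $\frac{1}{2}\mathbb{Z}_{\geqslant 0}$, and (because $h^{(1)}$ and $h^{(2)}$ are conjugate under $\Aut(\mathfrak{sl}_2)$) their graded dimensions agree entirely. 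The whole force of the theorem in this paper is to distinguish such pairs --- e.g.\ $\overline{L(k,i)}^{\sigma_1,\pm}$ from $\overline{L(k,i)}^{\sigma_2,\pm}$ in Theorem \ref{main1} --- so no bookkeeping of weights or gradings can suffice. What actually powers the proof in \cite{DY02}, \cite{DRX17} is a Jacobson-density type theorem for the associative algebra generated by all components of the twisted operators $Y_P(v,z)$, $v\in V$, on the direct sum of a $G$-stable family of inequivalent irreducible twisted modules: the commutant of $V^G$ there is exactly the twisted group algebra part spanned by the maps $\phi(h)$ and the sector projections, and since that commutant preserves each orbit-sum, multiplicity spaces $M_\lambda$ and $N_\mu$ arising from distinct orbits cannot be isomorphic. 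That density theorem is the missing engine in your proposal.
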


\begin{thm}
Let $V^G$ be a regular and selfdual vertex operator algebra of CFT type. Then any irreducible $V^G$-module is isomorphic to $M_\lambda$ for some irreducible $g$-twisted $V$-module $M$ and some $\lambda \in \Lambda_{G_M, \alpha_M}$. In particular, if $V$ is a regular and selfdual vertex operator algebra of CFT type and $G$ is solvable, then any irreducible $V^G$-module is isomorphic to some $M_\lambda$.
\label{DRX17 thm3}
\end{thm}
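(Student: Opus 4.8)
The plan is to treat the two assertions separately: the first (conditional on $V^G$ being regular) is assembled from the structural results already recorded, while the second feeds the solvability hypothesis into the regularity machinery to verify that conditional hypothesis. Throughout I keep the notation $G_M$, $\alpha_M$, $\Lambda_{G_M,\alpha_M}$ from the paragraphs preceding Theorem~\ref{DRX17 thm1}, and I use repeatedly that $g\in G$ gives $V^G\subseteq V^{\langle g\rangle}\subseteq V$, so that any $g$-twisted $V$-module, and any $V^{G_M}$-module, restricts to a $V^G$-module.

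For the first statement, I would start from the given regularity of $V^G$, which (via the finiteness theorem above) yields only finitely many irreducible $V^G$-modules up to isomorphism, and from the regularity of $V$, which by the general theory of twisted modules makes $V$ $g$-rational for each $g\in G$; hence every twisted sector has finitely many irreducible $g$-twisted modules, all ordinary (cf. \cite{DLM00}). The family $\{M_\lambda\}$, with $M$ ranging over $G$-orbit representatives of irreducible $g$-twisted modules ($g\in G$) and $\lambda$ over $\Lambda_{G_M,\alpha_M}$, then consists of pairwise inequivalent irreducible modules: inequivalence within one sector is Theorem~\ref{DRX17 thm1}(3), and inequivalence across distinct orbits is Theorem~\ref{DRX17 thm2}. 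The remaining content is therefore \emph{exhaustion}, namely that an arbitrary irreducible $V^G$-module $N$ is isomorphic to some $M_\lambda$.

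To prove exhaustion I would realize $N$ inside a twisted sector by means of an induction functor from $V^G$-modules to (twisted) $V$-modules together with Frobenius reciprocity: $\Ind(N)$ is a nonzero weak $V$-module, which by the $g$-rationality of the twisted categories decomposes into a finite sum of irreducible $g$-twisted $V$-modules, and reciprocity forces $N$ to embed into $M|_{V^G}$ for at least one such $M$. Restricting the decomposition $M=\bigoplus_\lambda W_\lambda\otimes M_\lambda$ of Theorem~\ref{DRX17 thm1} down through $V^{G_M}\supseteq V^G$, and using that $\phi(g)$ acts as the scalar $e^{2\pi\sqrt{-1}r/T}$ on $M^r$ (as recorded from \cite{DM97}, \cite{DRX17}) to select the correct eigenspace, then identifies $N$ with one of the $M_\lambda$. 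I expect the genuinely hard point to be exactly this induction--reciprocity step for \emph{twisted} modules: one must build the functor compatibly with the grading over $G/\langle g\rangle$ and keep track of the $2$-cocycle $\alpha_M$ so that the multiplicity spaces $M_\lambda$ are matched correctly; this is the technical core taken from \cite{DRX17} and, in the cyclic base case, from \cite{DY02}.

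For the second statement the only extra ingredient is that, when $V$ is regular, selfdual and of CFT type and $G$ is solvable, $V^G$ inherits these same properties, so that the hypothesis of the first statement holds automatically. I would prove this by induction along a subnormal series $1=G_0\triangleleft G_1\triangleleft\cdots\triangleleft G_n=G$ with cyclic quotients of prime order: at each stage $V^{G_{i+1}}=(V^{G_i})^{G_{i+1}/G_i}$ is a cyclic orbifold of the regular, selfdual, CFT-type algebra $V^{G_i}$, hence is again regular, selfdual and of CFT type by \cite{CM} and \cite{M15}. Invoking the first statement then completes the argument; compared with the twisted induction step above, this solvable reduction is essentially formal once the cyclic orbifold is known to preserve the three required properties.
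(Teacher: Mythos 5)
The first thing to note is that the paper does not prove this statement at all: Theorem~\ref{DRX17 thm3} is quoted from \cite{DRX17} (with the solvable case resting on the regularity results of \cite{CM} and \cite{M15}), so your attempt has to be measured against the argument actually given in that reference. Your outer scaffolding is fine and matches how the result is meant to be assembled: pairwise inequivalence of the $M_\lambda$ is indeed Theorem~\ref{DRX17 thm1}(3) together with Theorem~\ref{DRX17 thm2}, and your reduction of the solvable case to the first statement is correct (CFT type is immediate since $(V^G)_0=V_0=\C\mathbbm{1}$, selfduality follows from simplicity of $V^G$ via \cite{DM97} and $L(1)V_1=0$, and regularity of $V^G$ is exactly what \cite{CM}, \cite{M15} provide).

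The genuine gap is in the only step that carries real content: exhaustion. You propose an induction functor from $V^G$-modules to (twisted) $V$-modules satisfying Frobenius reciprocity, with $\Ind(N)$ nonzero and decomposing into irreducible $g$-twisted modules, and you defer this "technical core" to \cite{DRX17} and \cite{DY02}. No such functor is constructed in either reference; \cite{DY02} concerns the dual-pair decomposition underlying Theorems~\ref{DRX17 thm1} and~\ref{DRX17 thm2}, not induction. Worse, building such a functor --- in particular showing that the induced object is nonzero and carries a \emph{twisted} $V$-module structure at all --- is essentially equivalent to the theorem you are trying to prove (it is the $G$-equivariantization statement, made rigorous for vertex operator algebras only later, by vertex tensor category methods). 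So as a blind proof your argument is circular at precisely the decisive point. The actual proof in \cite{DRX17} is entirely different: it is a counting argument with quantum dimensions. Using modular invariance of twisted trace functions \cite{DLM00} and the modular tensor category/Verlinde-formula results available for the regular, selfdual, CFT-type algebra $V^G$, one computes the quantum dimensions $\mathrm{qdim}_{V^G}M_\lambda$ of the known modules, shows that the sum of their squares already equals the global dimension of $V^G$ (which is $|G|^2$ times that of $V$), and concludes that no further irreducible $V^G$-module can exist. Your proposal cannot be completed from the references you invoke; it would need either this quantum-dimension argument or the later equivariantization machinery in its place.
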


We now recall from \cite{FHL93} the notions of intertwining operators and fusion rules.

\begin{defn} Let $(V,\ Y)$ be a vertex operator algebra and
let $(W^{1},\ Y^{1}),\ (W^{2},\ Y^{2})$ and $(W^{3},\ Y^{3})$ be
$V$-modules. An \emph{intertwining operator} of type $\left(\begin{array}{c}
W^{3}\\
W^{1\ }W^{2}
\end{array}\right)$ is a linear map
\begin{align*}
I(\cdot, z) : W^{1} & \longrightarrow \Hom ( W^{2}, W^{3} ) \{ z \} \\
                 u  & \longmapsto I(u, z) = \sum_{n\in\mathbb{Q}}u_{n}z^{-n-1}
\end{align*}
% \[
% I(\cdot,\ z):\ W^{1}\to\text{\ensuremath{\mbox{Hom}(W^{2},\ W^{3})\{z\}}}
% \]
% \[
% u\to I(u,\ z)=\sum_{n\in\mathbb{Q}}u_{n}z^{-n-1}
% \]
satisfying:

(1) for any $u\in W^{1}$ and $v\in W^{2}$, $u_{n}v=0$ for $n$
sufficiently large;

(2) $I(L(-1)v,\ z)=\frac{d}{dz}I(v,\ z)$;

(3) (Jacobi identity) for any $u\in V,\ v\in W^{1}$

\[
z_{0}^{-1}\delta\left(\frac{z_{1}-z_{2}}{z_{0}}\right)Y^{3}(u,\ z_{1})I(v,\ z_{2})-z_{0}^{-1}\delta\left(\frac{-z_{2}+z_{1}}{z_{0}}\right)I(v,\ z_{2})Y^{2}(u,\ z_{1})
\]
\[
=z_{2}^{-1}\delta\left(\frac{z_{1}-z_{0}}{z_{2}}\right)I(Y^{1}(u,\ z_{0})v,\ z_{2}).
\]

The space of all intertwining operators of type $\left(\begin{array}{c}
W^{3}\\
W^{1}\ W^{2}
\end{array}\right)$ is denoted by
$$I_{V}\left(\begin{array}{c}
W^{3}\\
W^{1}\ W^{2}
\end{array}\right).$$ Let $N_{W^{1},\ W^{2}}^{W^{3}}=\dim I_{V}\left(\begin{array}{c}
W^{3}\\
W^{1}\ W^{2}
\end{array}\right)$. These integers $N_{W^{1},\ W^{2}}^{W^{3}}$ are usually called the
\emph{fusion rules}.
\end{defn}

%\begin{remark}\label{Intertwining expression} \cite{FZ} Let $M^{i}=\oplus_{n\in\mathbb{Z}}M^{i}(n),$
%$i=1,2,3$ be irreducible modules for a vertex operator algebra $V,$
%and the corresponding conformal weights are $h_{i}$, $i=1,2,3$.
%If $I(\cdot,z)$ is an intertwining operator of type $\left(\begin{array}{c}
%M^{3}\\
%M^{1}\ M^{2}
%\end{array}\right),$ then $I(\cdot,z)$ can be written as
%\[
%I(v,z)=\sum_{n\in\mathbb{Z}}v(n)z^{-n-1}z^{-h_{1}-h_{2}+h_{3}}
%\]
 %such that for honogeneous $v\in M^{1},$ $v(n)M^{2}(m)\subset M^{3}\left(m+deg\, v-1-n\right),$
%where $deg\, v=k$ means $v\in M^{1}(k).$

%We will write $o(v)=v(deg\, v-1).$

%\end{remark}

\begin{defn} Let $V$ be a vertex operator algebra, and $W^{1},$
$W^{2}$ be two $V$-modules. A module $(W,I)$, where $I\in I_{V}\left(\begin{array}{c}
\ \ W\ \\
W^{1}\ \ W^{2}
\end{array}\right),$ is called a \emph{tensor product} (or fusion product) of $W^{1}$
and $W^{2}$ if for any $V$-module $M$ and $\mathcal{Y}\in I_{V}\left(\begin{array}{c}
\ \ M\ \\
W^{1}\ \ W^{2}
\end{array}\right),$ there is a unique $V$-module homomorphism $f:W\rightarrow M,$ such
that $\mathcal{Y}=f\circ I.$ As usual, we denote $(W,I)$ by $W^{1}\boxtimes_{V}W^{2}$ or $W^{1}\boxtimes W^{2}$ simply.
\end{defn}

\begin{rmk}
It is well known that if $V$ is rational, then for any two irreducible
$V$-modules $W^{1}$ and $W^{2},$ the fusion product $W^{1}\boxtimes_{V}W^{2}$ exists and
$$
W^{1}\boxtimes_{V}W^{2}=\sum_{W}N_{W^{1},\ W^{2}}^{W}W,
$$
 where $W$ runs over the set of equivalence classes of irreducible
$V$-modules.
\end{rmk}
Fusion rules have the following symmetric property \cite{FHL93}.

\begin{prop}\label{fusionsymm.}
Let $W^{i} (i=1,2,3)$ be $V$-modules. Then
$$N_{W^{1},W^{2}}^{W^{3}}=N_{W^{2},W^{1}}^{W^{3}}, \ N_{W^{1},W^{2}}^{W^{3}}=N_{W^{1},(W^{3})^{'}}^{(W^{2})^{'}}.$$
\end{prop}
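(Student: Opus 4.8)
The plan is to prove both identities by exhibiting explicit linear isomorphisms between the relevant spaces of intertwining operators; since $N^{W^3}_{W^1,W^2}$ is by definition the dimension of such a space, equality of the fusion rules follows once the maps are shown to be bijective.

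For the first identity, I would define a \emph{skew-symmetry} map
$$\Omega : I_V\binom{W^3}{W^1\ W^2} \longrightarrow I_V\binom{W^3}{W^2\ W^1}$$
by the formula
$$\Omega(I)(w^2,z)w^1 = e^{zL(-1)}\, I\!\left(w^1,\, e^{\pi\sqrt{-1}}z\right)w^2$$
for $w^1 \in W^1$, $w^2 \in W^2$, where the factor $e^{\pi\sqrt{-1}}z$ records the branch chosen when expanding the (possibly fractional) powers of $z$. First I would check that $\Omega(I)$ satisfies the three axioms of an intertwining operator: the lower-truncation condition is immediate from that for $I$ together with the fact that $e^{zL(-1)}$ only raises powers of $z$; the $L(-1)$-derivative property follows from $[L(-1),e^{zL(-1)}]=0$ and the chain rule applied to the substitution $z\mapsto e^{\pi\sqrt{-1}}z$; and the Jacobi identity for $\Omega(I)$ is deduced from that of $I$ by conjugating with $e^{zL(-1)}$ and using the standard $\delta$-function manipulations that convert the commutator form of the Jacobi identity into its skew form. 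Finally I would verify that applying the analogous map in the reverse direction recovers $I$, so that $\Omega$ is invertible and $N^{W^3}_{W^1,W^2}=N^{W^3}_{W^2,W^1}$.

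For the second identity, I would define an \emph{adjoint} map
$$A : I_V\binom{W^3}{W^1\ W^2} \longrightarrow I_V\binom{(W^2)'}{W^1\ (W^3)'}$$
through the natural pairings between the modules and their contragredients. Concretely, for $I$ of type $\binom{W^3}{W^1\ W^2}$, the operator $A(I)$ is determined by
$$\langle A(I)(w^1,z)f,\, w^2\rangle = \langle f,\, I\!\left(e^{zL(1)}(-z^{-2})^{L(0)}w^1,\, z^{-1}\right)w^2\rangle,$$
where $f \in (W^3)'$, $w^1 \in W^1$, $w^2 \in W^2$, and the right-hand side is read through the pairing $(W^3)'\times W^3\to\mathbb{C}$. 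The verification of the intertwining axioms here runs parallel to the first case, but now the key input is the defining formula for the vertex operator on a contragredient module recalled earlier in this section; the Jacobi identity for $A(I)$ follows by transporting the Jacobi identity for $I$ through the pairing and simplifying the resulting action of $e^{zL(1)}$ and $(-z^{-2})^{L(0)}$ via the conjugation formulas these operators satisfy against vertex operators. Invertibility again follows by constructing the inverse in the same fashion and invoking the canonical isomorphism $((W^i)')'\cong W^i$.

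The routine parts are the truncation and $L(-1)$-derivative axioms, which amount to formal bookkeeping. The main obstacle in both cases is the Jacobi identity, where one must carefully track the formal-calculus identities for the $\delta$-function together with the branch choices encoded by the factors $e^{\pi\sqrt{-1}}$ and $(-z^{-2})^{L(0)}$; for the adjoint map the additional difficulty is correctly commuting $e^{zL(1)}$ and $(-z^{-2})^{L(0)}$ past the intertwining operator, for which the $\mathfrak{sl}_2$-type conjugation relations among $L(1)$, $L(0)$, and $L(-1)$ must be applied in the right order.
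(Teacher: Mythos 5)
Your proposal is correct and takes essentially the same route as the paper: the paper does not prove this proposition itself but quotes it directly from \cite{FHL93}, and your skew-symmetry map $\Omega$ and adjoint map $A$ are precisely the operators (the $\Omega_r$ and $A_r$ with $r=0$) by which that reference establishes the two identities. Nothing in your argument deviates from that standard construction.
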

\begin{defn} Let $V$ be a simple vertex operator algebra, a simple $V$-module $M$ is called a simple current if for any irreducible $V$-module, $M\boxtimes W$ exists and is also an irreducible $V$-module.
\end{defn}
We have the following fact about simple current extension from \cite{DLM96-1}, \cite{Lam01} and \cite{Li97-1}.
\begin{thm}\label{sc}
Let $V$ be a simple rational and $C_2$-cofinite  vertex operator algebra. Let $M$ be a simple current module of $V$ such that $\widetilde{V}=V\oplus M$ has a vertex operator algebra structure. Then, as a $V$-module, any irreducible module of $\widetilde{V}$ is of the following form:

(1) $N\oplus \widetilde{N}$, where $N$ is an irreducible $V$-module and $\widetilde{N}=M\boxtimes_{V}N\ncong N$. In this case, $N\oplus \widetilde{N}$ has a unique $\widetilde{V}$-module structure and the weight of  $\widetilde{N}$ is congruent to the weight of $N$ {\rm mod}$\Z$.

(2) $N$, where  $N$ is an irreducible $V$-module such that ${N} \cong M\boxtimes_{V}N$. In this case, there exist two non-isomorphic $\widetilde{V}$-module structures on $N$. These two modules are denoted by $N^+$ and $N^-$, respectively.
The modules in case (1) are often called ``nonsplit'' type and the modules in case (2) are often called ``split'' type.
\end{thm}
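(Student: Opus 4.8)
The plan is to exploit the $\Z_2$-grading that the extension $\widetilde{V}=V\oplus M$ carries. First I would observe that, since $\widetilde{V}$ is a vertex operator algebra, the vertex operator $Y(\cdot,z)$ restricted to $M\times M$ is, under the canonical identification, an intertwining operator of type $\binom{V}{M\ M}$, so $V$ is a summand of the fusion product $M\boxtimes_{V}M$; as $M$ is a simple current, $M\boxtimes_{V}M$ is irreducible and this forces $M\boxtimes_{V}M\cong V$. Consequently the linear map $\sigma$ acting as $+1$ on $V$ and $-1$ on $M$ is an order-two automorphism of $\widetilde{V}$ with fixed-point subalgebra $V=\widetilde{V}^{\langle\sigma\rangle}$, and every operator $Y_{\widetilde{V}}(w,z)$ with $w\in M$ involves only integral powers of $z$ because $\widetilde{V}$ is $\Z$-graded.

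Next, let $X$ be an irreducible $\widetilde{V}$-module and regard it as a $V$-module by restriction. Since $V$ is rational, $X$ decomposes into a direct sum of irreducible $V$-modules; choose an irreducible $V$-submodule $N\subseteq X$. For $w\in M$ the operator $Y_X(w,z)|_{N}$ is an intertwining operator of type $\binom{X}{M\ N}$, so by the universal property of the fusion product its total image $\sum_{n}w_nN$ is a homomorphic image of $M\boxtimes_{V}N$. Since $M$ is a simple current, $M\boxtimes_{V}N$ is irreducible, whence this image is either zero or an irreducible $V$-module isomorphic to $\widetilde{N}:=M\boxtimes_{V}N$. It cannot be zero, since $M$ must act nontrivially on $X$ (the $\mathbbm{1}$-component of $w_1\boxtimes_{V}w_2\in V$ acts as the identity). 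Thus the $M$-action carries $N$ onto a copy of $\widetilde{N}$ in $X$, and applying $M$ once more, together with $M\boxtimes_{V}M\cong V$, returns a copy of $M\boxtimes_{V}\widetilde{N}\cong N$.

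The classification now splits according to whether $\widetilde{N}\cong N$. If $\widetilde{N}\ncong N$, then $N$ and $\widetilde{N}$ are distinct irreducible $V$-submodules whose sum $N\oplus\widetilde{N}$ is stable under both $V$ and the $M$-action, hence is a nonzero $\widetilde{V}$-submodule; by irreducibility $X=N\oplus\widetilde{N}$, which is case (1). The weight statement is immediate, since the modes $w_n$ with $w\in M$ have integral index and therefore shift $L(0)$-weights by integers, so the conformal weights of $N$ and $\widetilde{N}$ lie in the same coset of $\C/\Z$. If instead $\widetilde{N}\cong N$, the $M$-action maps $N$ to an isomorphic copy of itself, so $X$ is already irreducible as a $V$-module and $X\cong N$; fixing an isomorphism $M\boxtimes_{V}N\xrightarrow{\sim}N$ determines the $M$-action up to a scalar, and the requirement that its square reproduce the $V$-action (via $M\boxtimes_{V}M\cong V$) pins this scalar to exactly two opposite values, yielding the two non-isomorphic structures $N^{+}$ and $N^{-}$ of case (2).

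The main obstacle is this last step: verifying that an honest $\widetilde{V}$-module structure genuinely exists on $N\oplus\widetilde{N}$ (respectively that exactly two exist on $N$) and is essentially unique. This amounts to checking the Jacobi identity for the candidate operators $Y_X(w,z)$, $w\in M$, which reduces to an associativity and commutativity relation among the intertwining operators of types $\binom{V}{M\ M}$, $\binom{\widetilde{N}}{M\ N}$ and $\binom{N}{M\ \widetilde{N}}$, that is, a coherence condition for the simple-current fusion. Rationality and $C_2$-cofiniteness of $V$ guarantee that these intertwining spaces are one-dimensional and that the required associativity holds, so the scalar normalizations can be chosen compatibly; tracking the residual sign ambiguity is precisely what separates the unique nonsplit extension from the pair $N^{\pm}$.
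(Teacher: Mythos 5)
The paper itself gives no proof of Theorem \ref{sc}: it is quoted from \cite{DLM96-1}, \cite{Lam01} and \cite{Li97-1}, and your outline follows the same route as those sources (restrict an irreducible $\widetilde{V}$-module $X$ to $V$, let $M$ act, and use the simple-current property to get at most two summands). However, your write-up has genuine gaps. The most serious one is at the very start: you need the $V$-component of $Y_{\widetilde{V}}|_{M\times M}$ to be nonzero before you can conclude $M\boxtimes_{V}M\cong V$, and your later claim that ``$M$ must act nontrivially on $X$'' is justified by a parenthetical that presupposes exactly this nonvanishing, so the argument is circular. The nonvanishing is equivalent to $M$ not being an ideal of $\widetilde{V}$, and it does \emph{not} follow from the stated hypotheses: if $M$ is any irreducible $V$-module with integral weights (e.g.\ $M\cong V$), the square-zero extension in which one decrees $Y(w,z)w'=0$ for all $w,w'\in M$ satisfies every vertex operator algebra axiom, yet for it conclusion (2) is false --- the only $\widetilde{V}$-module structure on an irreducible $V$-module has $M$ acting as zero. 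What rescues the theorem is simplicity of $\widetilde{V}$ (assumed or established in the cited references, and true for the extension $L(2m,0)\oplus L(2m,2m)$ used later in the paper); your proof never invokes it, and without it the key step cannot be repaired.

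Two further steps are asserted rather than proved. First, in case (2) you claim that $M\boxtimes_{V}N\cong N$ forces ``$X$ is already irreducible as a $V$-module''; this does not follow, since a priori $M\cdot N$ could be a copy of $N$ \emph{distinct} from $N$ inside $X$, giving $X|_{V}\cong N\oplus N$. Ruling out this multiplicity requires an argument: write the isotypic component as $N\otimes\C^{d}$, use $\dim I_{V}\binom{N}{M\ N}=1$ to express the $M$-action as $\mathcal{Y}_{0}\otimes A$ for a single matrix $A$, deduce $A^{2}=c\,\mathrm{id}$ with $c\ne 0$ from associativity against $M\boxtimes_{V}M\cong V$, and diagonalize $A$ to split $X$ unless $d=1$. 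Second, your final paragraph concedes the existence/uniqueness claims instead of closing them, and it also misjudges what is needed: since ``in this case'' refers to $V$-modules that already underlie an irreducible $\widetilde{V}$-module, no Huang-type coherence of intertwining operators is required at all --- any two $\widetilde{V}$-structures on the same $V$-module differ by scalars on the one-dimensional intertwining spaces, weak associativity (which is a \emph{module axiom}, not a theorem to be invoked) forces $\lambda\mu=1$ in case (1) and $\lambda^{2}=1$ in case (2), and Schur's lemma shows the rescaling is an isomorphism in case (1) but not in case (2). If instead you intend the stronger statement that every eligible $N$ actually supports such a structure --- which the paper does rely on when asserting that its lists of modules are exhaustive --- then that existence is precisely the hard content of \cite{Li97-1} and \cite{Lam01}, and cannot be dispatched by saying ``the required associativity holds, so the scalar normalizations can be chosen compatibly.''
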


Let $V$, $\widetilde{V}$, $M$ be as in Theorem \ref{sc}. Assume that $N$ is an irreducible $V$-module such that $\widetilde{N}=M\boxtimes_{V}N\cong N$. Then there exists a $V$ isomorphism $f$ from $N$ to $\widetilde{N}$ such that
$$
fY(a,z)f(u)=Y(a,z)u,
$$
for $a\in M$, $u\in N$. Then \cite{Li97-1}
\begin{equation}\label{esc}
N^+=\{w+f(w)|w\in N\}, \ N^-=\{w-f(w)|w\in N\},
\end{equation}
and $N\oplus \widetilde{N}=N^{+}\oplus N^{-}$.

\section{$L_{\widehat{\frak{sl}_{2}}}(k,0)^{K}$ and Fermionic construction of $L_{\widehat{\mathfrak{sl}_2}}(2m,0)$}

In this section, we will   introduce the Klein group ${K}$ which is a subgroup of $\Aut(L_{\widehat{\frak{sl}_{2}}}(k,0))$, and realize each element of ${K}$ as an inner automorphism of $\frak{sl}_2$. We will also review the construction of the simple vertex operator algebra $L_{\widehat{\mathfrak{sl}_2}}(2m,0)$ by using the fermionic vertex superalgebras following \cite{AP}, \cite{JLam19} and \cite{KW94}.

Let $h,e,f$ be a standard basis of $\frak{sl}_2(\C)$, define automorphisms $\sigma_1$ and $\sigma_2$ of $\frak{sl}_2(\C)$ as follows:
$$
\sigma_1(h)=h, \ \sigma_1(e)=-e, \ \sigma_1(f)=-f;
$$
$$
\sigma_2(h)=-h, \ \sigma_2(e)=f, \ \sigma_2(f)=e.
$$
It is obvious that the automorphic subgroup generated by $\sigma_1$ and $\sigma_2$ is isomorphic to the Klein group ${K}$, and  ${K}$ can be lifted to an automorphic subgroup of the vertex operator algebra $L_{\widehat{\mathfrak{sl}_{2}}}(k,0)$ for $k \in \Z_{+}$. Set
\[h^{(1)} = h, \quad e^{(1)} = e, \quad f^{(1)} = f,\]
\[h^{(2)} = e + f, \quad e^{(2)} = \frac{1}{2}(h - e + f), \quad f^{(2)} = \frac{1}{2}(h + e - f),\]
\[h^{(3)} = \sqrt{-1}(e - f), \quad e^{(3)} = \frac{1}{2}(\sqrt{-1}h + e + f), \quad f^{(3)} = \frac{1}{2}(-\sqrt{-1}h + e + f).\]
We can verify that $\{ h^{(1)}$, $e^{(1)}$, $f^{(1)} \}$, $\{ h^{(2)}$, $e^{(2)}$, $f^{(2)} \}$ and $\{ h^{(3)}$, $e^{(3)}$, $f^{(3)} \}$ are $\mathfrak{sl}_2$-triples, and for $r = 1$, $2$, or $3$, we have
\[\sigma_r(h^{(r)}) = h^{(r)}, \quad \sigma_r(e^{(r)}) = -e^{(r)}, \quad \sigma_r(f^{(r)}) = -f^{(r)}.\]
Let $h^{(r)'}=\frac{1}{4}h^{(r)}$ for  $r = 1$, $2$, $3$. Then we have the following result.

\begin{prop}
For  $r = 1$, $2$, $3$, $e^{2\pi \sqrt{-1}h^{(r)'}(0)} = \sigma_r$.
\end{prop}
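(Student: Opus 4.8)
The plan is to verify directly that the inner automorphism $e^{2\pi\sqrt{-1}h^{(r)\prime}(0)}$, which acts on $\mathfrak{sl}_2$ via $\ad$, agrees with $\sigma_r$ on a basis of $\mathfrak{sl}_2$, and then invoke the fact that both are algebra automorphisms lifting to $L_{\widehat{\mathfrak{sl}_2}}(k,0)$ determined by their action on the weight-one space. Since $h^{(r)\prime} = \tfrac14 h^{(r)}$, the operator $2\pi\sqrt{-1}\,h^{(r)\prime}(0)$ acts on $L_{\widehat{\mathfrak{sl}_2}}(k,0)$ as $\tfrac{\pi\sqrt{-1}}{2}\,h^{(r)}(0) = \tfrac{\pi\sqrt{-1}}{2}\ad(h^{(r)})$ on the degree-one part. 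The key point is that $\{h^{(r)}, e^{(r)}, f^{(r)}\}$ is an $\mathfrak{sl}_2$-triple, so $\ad(h^{(r)})$ has eigenvalues $0$ on $h^{(r)}$ and $\pm 2$ on $e^{(r)}$, $f^{(r)}$ respectively.

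First I would record the eigenvalue computation. For $r=1$, $\ad(h^{(1)})=\ad(h)$ satisfies $[h,h]=0$, $[h,e]=2e$, $[h,f]=-2f$, so $h^{(1)}$, $e^{(1)}$, $f^{(1)}$ are eigenvectors with eigenvalues $0$, $2$, $-2$. Exponentiating, $e^{\frac{\pi\sqrt{-1}}{2}\ad(h^{(1)})}$ multiplies these by $e^{0}=1$, $e^{\pi\sqrt{-1}}=-1$, and $e^{-\pi\sqrt{-1}}=-1$, which is exactly the action of $\sigma_1$: it fixes $h$ and negates $e$ and $f$. The same numerology applies verbatim for $r=2$ and $r=3$, since by hypothesis each $\{h^{(r)},e^{(r)},f^{(r)}\}$ is an $\mathfrak{sl}_2$-triple, so $\ad(h^{(r)})$ has eigenvalues $0$, $2$, $-2$ on $h^{(r)}$, $e^{(r)}$, $f^{(r)}$, and exponentiation yields the eigenvalues $1$, $-1$, $-1$.

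Next I would check that this reproduces $\sigma_r$ on all of $\mathfrak{sl}_2$, not merely on the triple $\{h^{(r)},e^{(r)},f^{(r)}\}$. The defining relations state precisely that $\sigma_r(h^{(r)})=h^{(r)}$, $\sigma_r(e^{(r)})=-e^{(r)}$, $\sigma_r(f^{(r)})=-f^{(r)}$, so both $e^{2\pi\sqrt{-1}h^{(r)\prime}(0)}$ and $\sigma_r$ agree on the basis $\{h^{(r)},e^{(r)},f^{(r)}\}$ of $\mathfrak{sl}_2$. Since two Lie algebra automorphisms that coincide on a basis are equal, the two maps agree as automorphisms of $\mathfrak{sl}_2$. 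Finally, because the automorphism of $L_{\widehat{\mathfrak{sl}_2}}(k,0)$ is determined by its restriction to the weight-one subspace (which generates the whole vertex operator algebra as $\widehat{\mathfrak{sl}_2}$ acts), the equality on $\mathfrak{sl}_2$ lifts to the desired equality of automorphisms of $L_{\widehat{\mathfrak{sl}_2}}(k,0)$.

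The only genuinely nontrivial input is that $\{h^{(2)},e^{(2)},f^{(2)}\}$ and $\{h^{(3)},e^{(3)},f^{(3)}\}$ are in fact $\mathfrak{sl}_2$-triples with the stated $\sigma_r$-action; once that is taken as given (it is asserted just above the proposition and is a routine bracket check), the exponentiation is immediate. I expect no real obstacle: the main care is keeping the normalization factor $\tfrac14$ straight so that the exponent becomes $\pm\pi\sqrt{-1}$ rather than some other multiple of $\pi\sqrt{-1}$, which is exactly what makes $e^{\pm\pi\sqrt{-1}}=-1$ produce the sign pattern of $\sigma_r$.
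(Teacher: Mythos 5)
Your proposal is correct and takes essentially the same route as the paper: both proofs reduce to computing that $h^{(r)'}(0)=\frac{1}{4}h^{(r)}(0)$ has eigenvalues $0$, $\frac{1}{2}$, $-\frac{1}{2}$ on the triple $\{h^{(r)}, e^{(r)}, f^{(r)}\}$, exponentiating to get the sign pattern $1$, $-1$, $-1$ that defines $\sigma_r$, and concluding equality because the weight-one space generates $L_{\widehat{\mathfrak{sl}_2}}(k,0)$. The one point where the paper is more careful than you are is the step you simply assert --- that $e^{2\pi\sqrt{-1}h^{(r)'}(0)}$ is in fact an automorphism of the vertex operator algebra: the paper verifies that $h^{(r)'}(0)$ acts semisimply with rational eigenvalues and invokes \cite{Li96-2} for this, a justification you should include rather than take for granted.
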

\begin{proof}
For each $r = 1$, $2$, $3$,  direct calculations yield that
\[ L(n)h^{(r)'} = \delta_{n,0}h^{(r)'}, \quad h^{(r)'}(n)h^{(r)'} = \frac{1}{8}\delta_{n,1}k,\quad \text{for} \quad n \in \mathbb{Z}_+ , \]
\[ h^{(r)'}(0)e^{(r)} = \frac{1}{2}e^{(r)}, \quad h^{(r)'}(0)f^{(r)} = -\frac{1}{2}f^{(r)}, \quad h^{(r)'}(0)h^{(r)'} = 0, \quad e^{(r)}(0)f^{(r)} = 4h^{(r)'} ,\]
where $L(n)=\omega(n+1)$, $\omega$ is the conformal vector of $L_{\widehat{\mathfrak{sl}_2}}(k,0)$. These equations show that $h^{(r)'}(0)$ acts on $L_{\widehat{\mathfrak{sl}_2}}(k,0)$ semisimply with rational eigenvalues. From \cite{Li96-2}, we know that $e^{2\pi \sqrt{-1}h^{(r)'}(0)}$ is an automorphism of $L_{\widehat{\mathfrak{sl}_2}}(k,0)$. Moreover, $e^{2\pi \sqrt{-1}h^{(r)'}(0)}(h^{(r)}) = h^{(r)}$, $e^{2\pi \sqrt{-1}h^{(r)'}(0)}(e^{(r)}) = -e^{(r)}$, $e^{2\pi \sqrt{-1}h^{(r)'}(0)}(f^{(r)}) = -f^{(r)}$. Thus $e^{2\pi \sqrt{-1}h^{(r)'}(0)} = \sigma_r$.
\end{proof}

For a positive integer  $m\geqslant 4$,  let $\mathcal{C}\mathbf{\ell}_{3m}$  be the Clifford algebra generated by
\[\psi_{ij}(r),\quad r \in \frac{1}{2} + \mathbb{Z}, 1\leqslant i\leqslant m, 1\leqslant j\leqslant 3,\]
with the non-trivial relations
\[[\psi_{ij}(r), \; \psi_{tl}(s)]_+ = \delta_{it}\delta_{jl}\delta_{r+s,0} \;,\]
where $1\leqslant i, t\leqslant m$, $1\leqslant j, l\leqslant 3$, $r, s \in \frac{1}{2} + \mathbb{Z}$.

Let $\mathcal{F}_{3m}$ be the irreducible $\mathcal{C}\mathbf{\ell}_{3m}$-module generated by the cyclic vector $\mathbbm{1}$ such that
\[\psi_{ij}(r)\mathbbm{1}=0, \quad \text{for} \quad r > 0, 1\leqslant i\leqslant m, 1\leqslant j\leqslant 3. \]
Define the following fields on $\mathcal{F}_{3m}$ as follows
\[ \psi_{ij}(x)= \sum_{r\in \mathbb{Z}}\psi_{ij}(r+\frac{1}{2})x^{-r-1}.  \]
Then the fields $\psi_{ij}(x)$, $1\leqslant i\leqslant m$, $1\leqslant j\leqslant 3$ generate the unique structure of a simple vertex superalgebra on $\mathcal{F}_{3m}$. Let $\mathcal{F}_{3m}^{even}$ be the even part of the vertex superalgebra $\mathcal{F}_{3m}$. It was proved in \cite{FF85} that for $m\geqslant 2$,
\[\mathcal{F}_{3m}^{even}\cong L_{\widehat{\mathfrak{so}_{3m}}}(1,0).\]

Obviously, the vertex operator algebra $L_{\widehat{\mathfrak{so}_{3m}}}(1,0)$ is generated by $\psi_{ri}(-\frac{1}{2})\psi_{sj}(-\frac{1}{2})\mathbbm{1}$, $1\leqslant r, s \leqslant m$, $1\leqslant i,j \leqslant 3$. Moreover, the vertex operator subalgebra of $L_{\widehat{\mathfrak{so}_{3m}}}(1,0)$ generated by $\sum_{r=1}^{m}\psi_{ri}(-\frac{1}{2})\psi_{rj}(-\frac{1}{2})\mathbbm{1}$, $1\leqslant i,j \leqslant 3$, is isomorphic to $L_{\widehat{\mathfrak{sl}_{2}}}(2m,0)$. We simply denote this vertex operator subalgebra by $L_{\widehat{\mathfrak{sl}_{2}}}(2m,0)$, and this gives a fermionic construction of the simple vertex operator algebra $L_{\widehat{\mathfrak{sl}_2}}(2m,0)$.

Following \cite{JLam19}, for  $m \geqslant 4$,  we define the vertex operator algebra automorphism $\sigma_i'$ of $L_{\widehat{\mathfrak{so}_{3m}}}(1,0)$ for $1 \leqslant i \leqslant 3$ as follows
\[ \sigma_i'(\psi_{jl}(-\frac{1}{2})\psi_{rs}(-\frac{1}{2})\mathbbm{1})=(-1)^{\delta_{il}+\delta_{is}}\psi_{jl}(-\frac{1}{2})\psi_{rs}(-\frac{1}{2})\mathbbm{1}. \]
Then
\[ (\sigma_i')^2=id. \]

It is easy to check that the automorphism group of $L_{\widehat{\mathfrak{so}_{3m}}}(1,0)$ generated by $\{\sigma_i', 1\leqslant i\leqslant 3\}$ is  isomorphic to the  Klein group ${K}=\mathbb{Z}_2 \times \mathbb{Z}_2$.
Set
\[L_{\widehat{\mathfrak{so}_{3m}}}(1,0)^{K} = \{ v\in L_{\widehat{\mathfrak{so}_{3m}}}(1,0) \} | gv = v, g \in K \}.\]

We have the following result from \cite{JLam19}.
\begin{thm}\label{jl1}
For $m\geqslant 4$,
$$C_{L_{\wo}(1,0)^{\otimes 3}}(L_{\wo}(3,0))=(L_{\widehat{\frak{sl}_2}}(2m,0)+L_{\widehat{\frak{sl}_2}}(2m,2m))^{K}$$
if $m$ is even, and
$$C_{L_{\wo}(1,0)^{\otimes 3}}(L_{\wo}(3,0))=L_{\widehat{\frak{sl}_2}}(2m,0)^{K}$$
if $m$ is odd.
\end{thm}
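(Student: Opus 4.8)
The plan is to pass everything into the free-fermion realization set up above, where both sides of the identity become transparent, and to convert the computation of a commutant inside an orbifold into the computation of a commutant inside the full even fermion algebra followed by taking $K$-fixed points. Writing $\mathcal{F}_{3m} = \mathcal{F}_m^{(1)} \otimes \mathcal{F}_m^{(2)} \otimes \mathcal{F}_m^{(3)}$ according to the column index $j$, each $\sigma_j'$ acts as the parity involution on the $j$-th tensor factor, so $\sigma_1'\sigma_2'\sigma_3'$ is the total parity and acts trivially on $\mathcal{F}_{3m}^{even}$; this is exactly why $K=\langle\sigma_1',\sigma_2',\sigma_3'\rangle$ is Klein four. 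Hence a vector of $\mathcal{F}_{3m}^{even}$ is $K$-fixed iff it is even in each factor, giving
$$ (\mathcal{F}_{3m}^{even})^{K} = (\mathcal{F}_m^{even})^{\otimes 3} = L_{\wo}(1,0)^{\otimes 3}. $$
Moreover the diagonal currents $\sum_{l=1}^3 \psi_{al}(-\tfrac12)\psi_{bl}(-\tfrac12)\mathbbm{1}$ generating $L_{\wo}(3,0)$ are each fixed by every $\sigma_j'$ (both fermions lie in one column), so $L_{\wo}(3,0)\subseteq(\mathcal{F}_{3m}^{even})^{K}$.

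Since $L_{\wo}(3,0)$ is $K$-stable and contained in $(\mathcal{F}_{3m}^{even})^{K}$, the group $K$ preserves its commutant, and the elementary identity
$$ C_{(\mathcal{F}_{3m}^{even})^{K}}(L_{\wo}(3,0)) = C_{\mathcal{F}_{3m}^{even}}(L_{\wo}(3,0)) \cap (\mathcal{F}_{3m}^{even})^{K} = \big(C_{\mathcal{F}_{3m}^{even}}(L_{\wo}(3,0))\big)^{K} $$
reduces the theorem to identifying the commutant $C_{\mathcal{F}_{3m}^{even}}(L_{\wo}(3,0))$ in the full even fermion algebra and then taking $K$-invariants.

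The heart of the argument is this last identification. The column-versus-row symmetry of the fermions exhibits the level--rank dual pair $L_{\wo}(3,0)\times L_{\ws}(2m,0)$ inside $L_{\widehat{\mathfrak{so}_{3m}}}(1,0)$, the second factor being generated by $\sum_{r=1}^m \psi_{ri}(-\tfrac12)\psi_{rj}(-\tfrac12)\mathbbm{1}$ as already noted, so $L_{\ws}(2m,0)\subseteq C_{\mathcal{F}_{3m}^{even}}(L_{\wo}(3,0))$. A central-charge count,
$$ \frac{3m(m-1)}{2(m+1)} + \frac{3m}{m+1} = \frac{3m}{2}, $$
shows the two conformal vectors sum to that of $\mathcal{F}_{3m}^{even}$, so the commutant has the same central charge and the same Virasoro element as $L_{\ws}(2m,0)$ and is therefore an extension of it by integrally graded modules (recall $\mathcal{F}_{3m}^{even}$ is $\mathbb{Z}_{\geq 0}$-graded). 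The natural candidate is the nontrivial simple current $L_{\ws}(2m,2m)$, whose conformal weight is $\frac{2m(2m+2)}{4(2m+2)}=\frac{m}{2}$. This is an integer precisely when $m$ is even: for even $m$ the module $L_{\ws}(2m,2m)$ occurs in $\mathcal{F}_{3m}^{even}$ over the vacuum sector of $L_{\wo}(3,0)$ and the commutant is $L_{\ws}(2m,0)+L_{\ws}(2m,2m)$, while for odd $m$ its weight is a half-integer, so it cannot appear and the commutant is exactly $L_{\ws}(2m,0)$.

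Combining this with the reduction above gives the two cases of the statement, once one checks that the induced $K$-action on the commutant matches the $\mathfrak{sl}_2$-Klein group: on the currents $\sum_r \psi_{ri}\psi_{rj}$ with $i\neq j$ the involution $\sigma_l'$ multiplies by $-1$ exactly when $l\in\{i,j\}$, which under $\mathfrak{so}_3\cong\mathfrak{sl}_2$ is precisely the action of $\langle\sigma_1,\sigma_2\rangle\leq\Aut(L_{\ws}(2m,0))$ fixed at the start of this section (the $\pi$-rotations about the coordinate axes). I expect the genuinely hard step to be the rigorous determination of $C_{\mathcal{F}_{3m}^{even}}(L_{\wo}(3,0))$: the central-charge matching only guarantees \emph{an} extension of $L_{\ws}(2m,0)$, and establishing that $L_{\ws}(2m,2m)$, and nothing more, occurs — together with the exact parity role of $m$ — requires the detailed level--rank branching of $\mathcal{F}_{3m}^{even}$ over $L_{\wo}(3,0)\times L_{\ws}(2m,0)$, for instance through a character or theta-function computation.
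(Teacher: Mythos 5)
Your bookkeeping is all correct and cleanly organized: the identification of $K$ with the three column-parity involutions, the equality $(\mathcal{F}_{3m}^{even})^{K} = (\mathcal{F}_m^{even})^{\otimes 3} = L_{\wo}(1,0)^{\otimes 3}$, the fact that the diagonal $L_{\wo}(3,0)$ is pointwise $K$-fixed, the reduction $C_{(\mathcal{F}_{3m}^{even})^{K}}(L_{\wo}(3,0)) = \bigl(C_{\mathcal{F}_{3m}^{even}}(L_{\wo}(3,0))\bigr)^{K}$, the central-charge count $\frac{3m(m-1)}{2(m+1)}+\frac{3m}{m+1}=\frac{3m}{2}$, the exclusion of $L_{\ws}(2m,2m)$ for odd $m$ on integrality grounds, and the matching of the two descriptions of the Klein action on the $\mathfrak{so}_3$-currents all check out. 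But there is a genuine gap, and you have located it yourself: the identification of $C_{\mathcal{F}_{3m}^{even}}(L_{\wo}(3,0))$ as exactly $L_{\ws}(2m,0)\oplus L_{\ws}(2m,2m)$ for even $m$ and exactly $L_{\ws}(2m,0)$ for odd $m$ is asserted, not proved. Integrality of conformal weights does not suffice to pin down the extension: for instance, for $m=5$ the module $L_{\ws}(10,6)$ has conformal weight $1$, and for $m=11$ the module $L_{\ws}(22,16)$ has conformal weight $3$, so such modules are not excluded by grading alone; ruling them out (and, for even $m$, showing that $L_{\ws}(2m,2m)$ genuinely occurs over the $\mathfrak{so}_m$-vacuum) requires the actual level-rank branching of $\mathcal{F}_{3m}^{even}$ as an $L_{\wo}(3,0)\otimes L_{\ws}(2m,0)$-module, which your outline explicitly defers to ``a character or theta-function computation.''

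This gap is the entire mathematical content of the statement. Note that the paper itself gives no proof of this theorem: it is quoted verbatim from \cite{JLam19}, and the proof there consists precisely of the type $B$/$D$ level-rank branching computation that you leave open. So what your argument accomplishes is a correct reduction of the theorem to the main duality result of \cite{JLam19} (and your reduction is, in substance, how that result specializes to $n=3$ via $\mathfrak{so}_3\cong\mathfrak{sl}_2$), rather than an independent proof. To close it along your own lines you would need to establish the full decomposition of $\mathcal{F}_{3m}^{even}$ under the commuting pair, e.g.\ by a character identity or by the theory of dual pairs acting on $\mathbb{C}^m\otimes\mathbb{C}^3$, and then read off the multiplicity space of the vacuum $L_{\wo}(3,0)$-module.
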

Let
\[w^1=\sum_{r=1}^{m}\psi_{r2}(-\frac{1}{2})\psi_{r3}(-\frac{1}{2})\mathbbm{1},\]
\[w^2=\sum_{r=1}^{m}\psi_{r1}(-\frac{1}{2})\psi_{r3}(-\frac{1}{2})\mathbbm{1},\]
\[w^3=\sum_{r=1}^{m}\psi_{r1}(-\frac{1}{2})\psi_{r2}(-\frac{1}{2})\mathbbm{1}.\]
Using the non-trivial relations $[\psi_{ij}(r), \psi_{tl}(s)]_+ = \delta_{it}\delta_{jl}\delta_{r+s,0}$ and the Borcherds identities \eqref{Borcherds identity 1} and \eqref{Borcherds identity 2} , we obtain the following relations:
\[[w^1, w^2] = w^3, \quad [w^2, w^3] = w^1, \quad [w^3, w^1] = w^2.\]
Let
$$h=2\sqrt{-1}w^1 ,    \  e=\sqrt{-1}w^2-w^3 ,  \  f=\sqrt{-1}w^2+w^3.$$
Then $\{e,f,h\}$ are the generators of $L_{\widehat{\mathfrak{sl}_2}}(2m,0)$ and  form a standard basis of $L(0)$-eigenspace $(L_{\widehat{\mathfrak{sl}_2}}(2m,0))_1\cong \frak{sl}_2(\C)$. It is easy to check that on $L_{\widehat{\mathfrak{sl}_2}}(2m,0)$,
$$
\sigma_1=\sigma_1', \ \sigma_2=\sigma_2'.
$$

\section{Classification and construction of irreducible modules of $L_{\widehat{\mathfrak{sl}_2}}(k,0)^{K}$ and $C_{{L_{\widehat{\mathfrak{so}_m}}(1,0)}^{\otimes 3}}({L_{\widehat{\mathfrak{so}_m}}(3,0)})$}

In this section, we will classify and construct explicitly the irreducible modules of the orbifold vertex operator algebras $L_{\widehat{\mathfrak{sl}_2}}(k,0)^{K}$ for $k\geqslant 1$ and the commutant vertex operator algebras $C_{{L_{\widehat{\mathfrak{so}_m}}(1,0)}^{\otimes 3}}({L_{\widehat{\mathfrak{so}_m}}(3,0)})$ for $m\geqslant 4$.

Let Irr$(K)$ denote the set of irreducible characters of ${K}$ which only contains four irreducible characters $\chi_0$ (unit representation), $\chi_1$, $\chi_2$ and $\chi_3$ up to isomorphism with the following character table.
\[
\begin{tabular}{ccccc}
\hline
           &   $1$   &  $\sigma_1$  &  $\sigma_2$  &  $\sigma_3$  \\
\hline
$\chi_0$     &   $1$   &     $1$      &      $1$     &      $1$     \\
$\chi_1$     &   $1$   &     $1$      &     $-1$     &     $-1$     \\
$\chi_2$     &   $1$   &    $-1$      &      $1$     &     $-1$     \\
$\chi_3$     &   $1$   &    $-1$      &     $-1$     &      $1$     \\
\hline
\end{tabular}
\]

% Set
% $$v^1 = -\frac{\sqrt{-1}}{2} h, \  v^2 = -\frac{\sqrt{-1}}{2}(e +f), \  v^3 = -\frac{1}{2}(e-f),$$
\vskip0.3cm
For simplicity, we denote $L_{\widehat{\mathfrak{sl}_2}}(k,0)$ by $L(k,0)$. We first have the following decomposition.

\begin{thm}   \label{V decomposition}
As a $L(k,0)^{K}$-module,
\[ L(k,0)= \oplus_{j=0}^{3}L(k,0)^{(j)},\]
where $L(k,0)^{(0)} (= L(k,0)^{K}) ($resp. $L(k,0)^{(1)}, L(k,0)^{(2)}, L(k,0)^{(3)} )$ is the irreducible $L(k,0)^{K}$-module generated by the lowest weight vector $\mathbbm{1} ($resp. $h$, $e+f$, $e-f )$ with the lowest weight $0 ($resp. $1, 1, 1 )$.
\end{thm}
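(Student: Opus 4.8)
The plan is to view $L(k,0)$, as a module over itself, as an instance of the isotypic decomposition recalled just before Theorem~\ref{DRX17 thm1}, applied with $G=K$ and $g=\mathrm{id}$. Since $L(k,0)$ is simple, it is an irreducible (untwisted) module over itself, and every $h\in K$ stabilizes it: indeed $h$, viewed as a linear isomorphism of $L(k,0)$, satisfies $hY(v,z)h^{-1}=Y(hv,z)$ precisely because $h$ is a vertex operator algebra automorphism. Hence $G_{L(k,0)}=K$, and because $h\mapsto h$ is already a genuine (not merely projective) representation of $K$, the associated $2$-cocycle is trivial, so the twisted group algebra $\mathbb{C}^{\alpha}[K]$ coincides with the ordinary group algebra $\mathbb{C}[K]$.

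First I would use the decomposition of $\mathbb{C}[K]$ into its four one-dimensional characters $\chi_0,\chi_1,\chi_2,\chi_3$ from the table above. The general decomposition then reads $L(k,0)=\bigoplus_{j=0}^{3}W_{\chi_j}\otimes M_{\chi_j}$ with each $W_{\chi_j}$ one-dimensional, so that $W_{\chi_j}\otimes M_{\chi_j}$ is canonically identified with the $\chi_j$-isotypic component $L(k,0)^{(j)}=\{v\in L(k,0)\mid gv=\chi_j(g)v,\ g\in K\}$. By Theorem~\ref{DRX17 thm1}, each $L(k,0)^{(j)}$ is an irreducible $L(k,0)^{K}$-module and the four are pairwise inequivalent; this yields the direct-sum decomposition and the irreducibility claims simultaneously.

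It then remains to identify the lowest weight vectors and their weights. Using that $L(k,0)$ is of CFT type, I would note $L(k,0)_0=\mathbb{C}\mathbbm{1}$ and $L(k,0)_1\cong\mathfrak{sl}_2=\mathbb{C}h\oplus\mathbb{C}e\oplus\mathbb{C}f$. A short computation from the definitions of $\sigma_1,\sigma_2$ (and $\sigma_3=\sigma_1\sigma_2$) shows that $\mathbbm{1}$, $h$, $e+f$, $e-f$ transform under $K$ according to $\chi_0,\chi_1,\chi_2,\chi_3$ respectively, so each lies in the corresponding component $L(k,0)^{(j)}$. Since $\mathbbm{1}$ is the only weight-$0$ vector and carries $\chi_0$, the components $L(k,0)^{(j)}$ for $j=1,2,3$ vanish in weight $0$ and meet weight $1$ exactly in the one-dimensional space spanned by the stated vector; thus each stated vector is a nonzero lowest weight vector, of weight $0$ for $j=0$ and weight $1$ for $j=1,2,3$. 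Finally, because each $L(k,0)^{(j)}$ is irreducible, it is generated as an $L(k,0)^{K}$-module by any nonzero element, in particular by its lowest weight vector, completing the argument.

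I do not anticipate a serious obstacle: the statement is essentially a direct application of the recalled orbifold machinery together with the explicit $K$-action on $\mathfrak{sl}_2$. The one point genuinely requiring care is the verification that the cocycle is trivial — that is, that the lifts $\phi(h)$ may be chosen to be the automorphisms $h$ themselves — since this is exactly what lets us replace $\mathbb{C}^{\alpha}[K]$ by the ordinary group algebra and thereby forces all four isotypic components to be nonzero and irreducible.
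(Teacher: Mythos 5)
Your proof is correct and takes essentially the same approach as the paper: both decompose $L(k,0)$ into the four $K$-isotypic components, use quantum Galois/orbifold theory to get nonvanishing, irreducibility and pairwise inequivalence, and then read off the lowest weight vectors from $L(k,0)_0=\mathbb{C}\mathbbm{1}$ and $L(k,0)_1\cong\mathfrak{sl}_2$ using the explicit $K$-action. The only cosmetic difference is that the paper invokes the result of \cite{DM97} directly, whereas you re-derive that statement as the untwisted, trivial-cocycle special case of Theorem \ref{DRX17 thm1}.
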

\begin{proof}
Since ${K}$ is abelian, all the irreducible modules of $\mathbb{C}K$ are one dimensional. From \cite{DM97}, ${K} = \oplus_{\chi \in \Irr(K)}K_{\chi}$ is a decomposition of $L(k,0)$ into simple $L(k,0)^{K}$-modules. Moreover, ${K}_{\chi}$ is nonzero for any $\chi \in \Irr(K)$, and $L(k,0)_{\chi}$ and $L(k,0)_{\mu}$ are equivalent $L(k,0)^{K}$-module if and only if $\chi=\mu$. Obviously, $L(k,0)^{K}$ is an irreducible $L(k,0)^{K}$-module affording the unit character $\chi_0$. Observing the action of ${K}$ on $L(k,0)_1$ which is isomorphic to $\mathfrak{sl}_2(\C)$, we find that $h$, $e+f$, $e-f$ generate three inequivalent irreducible modules according to $\chi_1$, $\chi_2$, $\chi_3$, respectively. Note that $L(k,0)_0 = \mathbb{C}\mathbbm{1}$ and $L(k,0)_1 = \C h \oplus \mathbb{C}(e+f) \oplus \C(e-f)$, then $\mathbbm{1}$, $h$, $e+f$ and $e-f$ are four different lowest weight vectors in $L(k,0)$ as a $L(k,0)^{K}$-module. Let $L(k,0)^{(0)} ($resp. $L(k,0)^{(1)}, L(k,0)^{(2)}, L(k,0)^{(3)} )$ be the irreducible $L(k,0)^{K}$-module generated by the lowest weight vector $\mathbbm{1} ($resp. $h$, $e+f$, $e-f )$ with the lowest weight $0 ($resp. $1, 1, 1 )$. Then the irreducible $L(k,0)^{K}$-module decomposition $L(k,0)= \oplus_{j=0}^{3}L(k,0)^{(j)}$ holds.
\end{proof}

Let $\alpha$ be the simple root of $\mathfrak{sl}_2(\C)$ with $\langle \alpha, \alpha  \rangle = 2$. From \cite{FZ92}, the integrable highest weight $L(k,0)$-modules $L(k, i)$ for $0 \leqslant i \leqslant k$ provide a complete list of irreducible $L(k,0)$-modules with the lowest weight spaces being  $(i+1)$-dimensional irreducible $\mathfrak{sl}_2(\C)$-modules $L(\frac{i\alpha}{2})$, respectively.
For  $r = 1$, $2$, $3$, $0\leqslant i\leqslant k$, let $\{h^{(r)}, e^{(r)}, f^{(r)}\}$ be the same as introduced in Section 3, and $\{v^{r,i,j}|0 \leqslant j \leqslant i\}$  the basis of $L(\frac{i\alpha}{2})$ according to the $\mathfrak{sl}_2$-triple $\{ h^{(r)}, e^{(r)}, f^{(r)} \}$ with the following action of $\widehat{\mathfrak{sl}_2}$ on $L(\frac{i\alpha}{2})$, namely
\[ h^{(r)}(0)v^{r,i,j} = (i - 2j) v^{r,i,j} \quad \text{for} \quad 0 \leqslant j \leqslant i , \]
\[ e^{(r)}(0)v^{r,i,0} = 0, \quad e^{(r)}(0)v^{r,i,j} = (i - j + 1)v^{r,i,j-1} \quad \text{for} \quad 1 \leqslant j \leqslant i , \]
\[ f^{(r)}(0)v^{r,i,i} = 0, \quad f^{(r)}(0)v^{r,i,j} = (j + 1)v^{r,i,j+1} \quad \text{for} \quad 0 \leqslant j \leqslant i-1 , \]
\[ a^{(r)}(n)v^{r,i,j} = 0 \quad \text{for} \quad a \in \{ h, e, f \}, \quad n \geqslant 1 .\]

Recall that $h^{(r)'}=\frac{1}{4}h^{(r)}$. For  $r = 1$, $2$, $3$, let
\[\Delta(h^{(r)'}, z) = z^{h^{(r)'}(0)}\exp(\sum^{\infty}_{n=1}\frac{h^{(r)'}(n)}{-n}(-z)^{-n}).\]
From \cite{Li97-2}, we have the following result.
\begin{lem}
For each $r = 1$, $2$, $3$, $( \overline{L(k,i)}^{\sigma_r}, Y_{\sigma_{r}}(\cdot, z)) = ( L(k,i), Y(\Delta(h^{(r)'}, z)\cdot, z) ) ( 0 \leqslant i \leqslant k )$ provide a complete list of irreducible $\sigma_{r}$-twisted $L(k, 0)$-modules.
\end{lem}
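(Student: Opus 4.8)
The plan is to realize every module in the list as the image of an ordinary irreducible $L(k,0)$-module under Haisheng Li's $\Delta$-operator functor, and then to transport the classification of untwisted modules through the category isomorphism established in \cite{Li97-2}. Recall the general principle: if $h$ is a weight-one primary element of a vertex operator algebra $V$ with $h(n)h\in\C\mathbbm{1}$ for all $n\geqslant 1$ and such that $h(0)$ acts semisimply with rational eigenvalues, then for $\sigma=e^{2\pi\sqrt{-1}h(0)}$ the assignment $(M,Y_M)\mapsto\bigl(M,\,Y_M(\Delta(h,z)\cdot,z)\bigr)$ sends each weak $V$-module to a weak $\sigma$-twisted $V$-module, and, crucially, this assignment is an \emph{isomorphism of categories} from weak $V$-modules to weak $\sigma$-twisted $V$-modules. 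Being a category isomorphism, it preserves irreducibility, isomorphism classes, and in particular completeness of a list of irreducibles.

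First I would verify that $h=h^{(r)'}$ meets the hypotheses, for which no new computation is required: all the needed identities are already recorded in the proof of the preceding Proposition. The relation $L(n)h^{(r)'}=\delta_{n,0}h^{(r)'}$ exhibits $h^{(r)'}$ as a primary vector of weight one; the relation $h^{(r)'}(n)h^{(r)'}=\frac18\delta_{n,1}k\,\mathbbm{1}$ for $n\geqslant 1$ gives $h^{(r)'}(n)h^{(r)'}\in\C\mathbbm{1}$; and the eigenvalue computations there show that $h^{(r)'}(0)$ acts semisimply on $V=L(k,0)$ with eigenvalues in $\tfrac12\Z$ (the $h^{(r)}(0)$-eigenvalues on the vacuum module lie in the root lattice $2\Z$). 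Since the same Proposition identifies $\sigma_r=e^{2\pi\sqrt{-1}h^{(r)'}(0)}$ and shows $\sigma_r^2=\mathrm{id}$, the automorphism $\sigma_r$ has order $T=2$, which is precisely the setting of the $\sigma_r$-twisted module definitions of Section~2.

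With the hypotheses checked, the functor $M\mapsto\bigl(M,\,Y(\Delta(h^{(r)'},z)\cdot,z)\bigr)$ is a category isomorphism onto the $\sigma_r$-twisted side. By \cite{FZ92} the modules $L(k,i)$, $0\leqslant i\leqslant k$, form a complete list of the irreducible ordinary $L(k,0)$-modules. Applying the functor therefore produces the complete list $\bigl(\overline{L(k,i)}^{\sigma_r},Y_{\sigma_r}\bigr)=\bigl(L(k,i),\,Y(\Delta(h^{(r)'},z)\cdot,z)\bigr)$, $0\leqslant i\leqslant k$, of pairwise inequivalent irreducible $\sigma_r$-twisted $L(k,0)$-modules. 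Regularity of $L(k,0)$ guarantees $\sigma_r$-rationality, so every irreducible admissible $\sigma_r$-twisted module is ordinary and this list is genuinely exhaustive; one then repeats the argument for $r=1,2,3$.

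I expect the only point demanding real care to be the bookkeeping of fractional powers: one must confirm that the $\Delta$-twist yields modules whose $V$-action is graded by $\tfrac12\Z$ (so that they are $\sigma_r$-twisted in the sense of Section~2 with $T=2$), and that the shift encoded by $z^{h^{(r)'}(0)}$ matches the $e^{-2\pi\sqrt{-1}r/T}$-eigenspace decomposition $V=\bigoplus_r V^r$ under $\sigma_r$. This is exactly governed by the $\tfrac12\Z$-valued spectrum of $h^{(r)'}(0)$ on $V$ together with the identity $\sigma_r=e^{2\pi\sqrt{-1}h^{(r)'}(0)}$ from the preceding Proposition, so the compatibility is already guaranteed by the data assembled in the excerpt.
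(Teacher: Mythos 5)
Your proposal is correct and follows essentially the same route as the paper, which establishes this lemma simply by invoking Li's $\Delta$-operator theorem from \cite{Li97-2}; what you have written is a reconstruction of that cited result (the category isomorphism induced by $\Delta(h^{(r)'},z)$, with inverse given by $\Delta(-h^{(r)'},z)$) together with the verification of its hypotheses, which the paper has already recorded in the proposition identifying $\sigma_r=e^{2\pi\sqrt{-1}h^{(r)'}(0)}$. Your appeal to the categorical isomorphism (rather than, say, lowest weights, which cannot distinguish $\overline{L(k,i)}^{\sigma_r}$ from $\overline{L(k,k-i)}^{\sigma_r}$) is exactly the right mechanism for both inequivalence and completeness.
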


Direct calculations (also see \cite{JW1}) yield that for any $r = 1$, $2$, $3$,
\begin{equation}
h^{(r)'}(0)\omega = 0, \quad h^{(r)'}(1)\omega = h^{(r)'}, \quad h^{(r)'}(1)^{2}\omega = \frac{k}{8}\mathbbm{1},            \label{twist-h1}
\end{equation}
\begin{equation}
h^{(r)'}(2)\omega = 0, \quad h^{(r)'}(n)\omega = 0  \quad \text{for} \quad n \in \mathbb{Z}_{>2},                          \label{twist-h2}
\end{equation}
\begin{equation}
\Delta(h^{(r)'}, z)\omega = \omega + z^{-1}h^{(r)'} + z^{-2}\frac{k}{16}\mathbbm{1},  \label{twist-h3}
\end{equation}
\begin{equation}
Y_{\sigma_{r}}(h^{(r)'}, z) = Y(h^{(r)'}+\frac{k}{8}z^{-1}, z),   \label{twist-h4}
\end{equation}
\begin{equation}
Y_{\sigma_{r}}(h^{(r)}, z) = Y(h^{(r)}+\frac{k}{2}z^{-1}, z),    \label{twist-h5}
\end{equation}
\begin{equation}
Y_{\sigma_{r}}(e^{(r)}, z) =z^{\frac{1}{2}}Y(e^{(r)}, z),   \label{twist-e}
\end{equation}
\begin{equation}
Y_{\sigma_{r}}(f^{(r)}, z) =z^{-\frac{1}{2}}Y(f^{(r)}, z).     \label{twist-f}
\end{equation}
To distinguish the components of $Y(v,z)$ from those of $Y_{\sigma_{r}}(v,z) ( r = 1, 2, 3 )$, we denote the following expansions
\[ Y_{\sigma_{r}}(v,z) = \sum_{n\in \frac{t}{2}+\mathbb{Z}}v_nz^{-n-1}, \quad  Y(v,z) = \sum_{n\in\mathbb{Z}}v(n)z^{-n-1}, \]
where $v \in L(k, 0)$, $t \in \{0, 1\}$ such that $\sigma_{r}(v) = e^{-\pi \sqrt{-1}t}v$ for some $r \in \{1, 2, 3\}$.

For $0 \leqslant j \leqslant i$, we denote $v^{1,i,j}$ by $v^{i,j}$ for convenience.
For each $r = 1$, $2$, $3$, since $\{v^{r,i,j}|0 \leqslant j \leqslant i\}$ is a specific basis of $L(\frac{i\alpha}{2})$ according to the $\mathfrak{sl}_2$-triple $\{ h^{(r)}, e^{(r)}, f^{(r)} \}$, we can use $v^{i,j} ( 0 \leqslant j \leqslant i )$ to express the specific form of $v^{r,i,i}$ as follows:
\[ v^{1,i,i} = v^{i,i}, \quad v^{2,i,i} = \sum_{j=0}^{i}(-1)^jv^{i,j}, \quad v^{3,i,i} = \sum_{j=0}^{i}(\sqrt{-1})^jv^{i,j}. \]
Keeping in mind that the conformal vector $\omega$ belongs to $L(k,0)^{K}$, we can obtain the following lemma by a straightforward calculation (see also \cite{JW1} and \cite{JW2}).

\begin{lem} \label{twisted l.w.}
For $0 \leqslant i \leqslant k$, $r = 1$, $2$, $3$, we have $L_0v^{r,i,i} = (\frac{i(i-k)}{4(k+2)} + \frac{k}{16})v^{r,i,i}$.
\end{lem}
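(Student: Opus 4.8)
The statement computes the $L_0$-eigenvalue (the conformal weight under the twisted action) of the lowest weight vector $v^{r,i,i}$ in the $\sigma_r$-twisted module $\overline{L(k,i)}^{\sigma_r}$. The plan is to use the explicit formula for the twisted vertex operator, namely $Y_{\sigma_r}(\omega, z) = Y(\Delta(h^{(r)'},z)\omega, z)$, together with equation \eqref{twist-h3}, which gives
\[
\Delta(h^{(r)'}, z)\omega = \omega + z^{-1}h^{(r)'} + z^{-2}\tfrac{k}{16}\mathbbm{1}.
\]
Extracting the coefficient of $z^{-2}$ in $Y_{\sigma_r}(\omega, z)$ yields the twisted $L_0$ operator. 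Concretely, $L_0 = \omega(1) + h^{(r)'}(0) + \tfrac{k}{16}\,\mathbbm{1}(-1)$, where on the right the operators act as untwisted modes on $L(k,i)$. Here $\omega(1) = L(0)$ is the ordinary Virasoro grading operator, $\mathbbm{1}(-1) = \mathrm{id}$, and $h^{(r)'}(0) = \tfrac14 h^{(r)}(0)$.

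With this decomposition the computation reduces to three independent pieces applied to the vector $v^{r,i,i}$. First, $v^{r,i,i}$ lies in the lowest weight space $L(\tfrac{i\alpha}{2})$ of $L(k,i)$, which has ordinary conformal weight $\tfrac{i(i+2)}{4(k+2)}$ by the standard Sugawara formula for $\widehat{\mathfrak{sl}_2}$ at level $k$; so $\omega(1)v^{r,i,i} = \tfrac{i(i+2)}{4(k+2)}v^{r,i,i}$. Second, since $v^{r,i,i}$ is the lowest weight vector for the $\mathfrak{sl}_2$-triple $\{h^{(r)},e^{(r)},f^{(r)}\}$, the defining action $h^{(r)}(0)v^{r,i,i} = (i-2i)v^{r,i,i} = -i\,v^{r,i,i}$ gives $h^{(r)'}(0)v^{r,i,i} = -\tfrac{i}{4}v^{r,i,i}$. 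Third, the constant term contributes $\tfrac{k}{16}$. Adding these,
\[
\Big(\tfrac{i(i+2)}{4(k+2)} - \tfrac{i}{4} + \tfrac{k}{16}\Big)v^{r,i,i},
\]
and a routine simplification of the first two terms,
\[
\tfrac{i(i+2)}{4(k+2)} - \tfrac{i}{4} = \tfrac{i(i+2) - i(k+2)}{4(k+2)} = \tfrac{i(i-k)}{4(k+2)},
\]
produces exactly $\big(\tfrac{i(i-k)}{4(k+2)} + \tfrac{k}{16}\big)v^{r,i,i}$, as claimed.

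The only genuinely delicate point is verifying that $L_0$ is correctly identified as $\omega(1) + h^{(r)'}(0) + \tfrac{k}{16}$. This follows by matching coefficients of $z^{-n-2}$ in $Y_{\sigma_r}(\omega,z) = \sum_n L(n)z^{-n-2}$ against $Y(\Delta(h^{(r)'},z)\omega, z)$, using that in $Y(\omega + z^{-1}h^{(r)'} + z^{-2}\tfrac{k}{16}\mathbbm{1}, z)$ the $z^{-2}$ coefficient of the three summands are $\omega(1)$, $h^{(r)'}(0)$, and $\tfrac{k}{16}\mathbbm{1}(-1)$ respectively; equations \eqref{twist-h1}--\eqref{twist-h3} guarantee no higher modes of $h^{(r)'}$ intrude. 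I expect this bookkeeping, rather than the arithmetic, to be the main obstacle, and it is precisely the content already flagged as a ``straightforward calculation'' in the references \cite{JW1}, \cite{JW2}. A clean alternative is to invoke \eqref{twist-h5} directly to read off the shift $h^{(r)}(0)\mapsto h^{(r)}(0)+\tfrac{k}{2}z^{-1}$ in the twisted sector and combine it with the standard conformal weight formula, bypassing the explicit $\Delta$-expansion entirely.
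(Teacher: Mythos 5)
Your proposal is correct and follows exactly the route the paper intends: the paper states this lemma as a ``straightforward calculation'' from \eqref{twist-h3}, which is precisely your identification of the twisted operator $L_0 = L(0) + h^{(r)'}(0) + \tfrac{k}{16}$ followed by evaluation on $v^{r,i,i}$ using $L(0)v^{r,i,i} = \tfrac{i(i+2)}{4(k+2)}v^{r,i,i}$ and $h^{(r)}(0)v^{r,i,i} = -i\,v^{r,i,i}$. The arithmetic simplification and the final answer match the paper's statement.
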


By induction, we can prove the following lemma.
\begin{lem}
For each $r = 1$, $2$, $3$, $\{ h^{(r)}(-1)\mathbbm{1}, e^{(r)}(-1)^2\mathbbm{1}, f^{(r)}(-1)^2\mathbbm{1}\}$ is a generator set of the orbifold vertex operator subalgebra $L(k, 0)^{\langle \sigma_{r} \rangle}$ of $L(k, 0)$.
\end{lem}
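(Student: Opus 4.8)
The plan is to first perform a structural reduction to a single involution, then prove the generation statement by induction on conformal weight, the crucial enabling fact being that the Virasoro vector $\omega$ itself lies in the subalgebra generated by the three given vectors.

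First I would reduce to the case $r=1$. The three triples $\{h^{(r)},e^{(r)},f^{(r)}\}$ are $\mathfrak{sl}_2$-triples inside $L(k,0)_1\cong\mathfrak{sl}_2(\C)$, so there is an automorphism of $\mathfrak{sl}_2(\C)$ carrying $\{h^{(1)},e^{(1)},f^{(1)}\}$ to $\{h^{(r)},e^{(r)},f^{(r)}\}$; since every automorphism of $\mathfrak{sl}_2(\C)$ is inner it is realized by some $g\in SL_2(\C)$ and lifts, exactly as the automorphisms $e^{2\pi\sqrt{-1}h^{(r)'}(0)}$ do, to a vertex operator algebra automorphism $\theta_r$ of $L(k,0)$ with $\theta_r(h^{(1)})=h^{(r)}$, $\theta_r(e^{(1)})=e^{(r)}$, $\theta_r(f^{(1)})=f^{(r)}$ and $\theta_r\sigma_1\theta_r^{-1}=\sigma_r$. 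Then $\theta_r$ carries $L(k,0)^{\langle\sigma_1\rangle}$ isomorphically onto $L(k,0)^{\langle\sigma_r\rangle}$ and the $r=1$ generating set onto the general one, so it suffices to treat $\sigma_1$.

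Write $U$ for the subalgebra of $V:=L(k,0)$ generated by $h=h(-1)\mathbbm{1}$, $E=e(-1)^2\mathbbm{1}$, $F=f(-1)^2\mathbbm{1}$. Clearly $U\subseteq V^{\langle\sigma_1\rangle}$, and $V^{\langle\sigma_1\rangle}$ is spanned by the PBW monomials in the currents $h(-n),e(-n),f(-n)$ applied to $\mathbbm{1}$ containing an even number of $e,f$-factors. The first key point is that $\omega\in U$: indeed $h(-1)^2\mathbbm{1}=h(-1)h\in U$, while a direct computation of the product $E_1F$ (equivalently, the commutator of the modes of the fields $:\!e^2\!:$ and $:\!f^2\!:$) yields a nonzero multiple of $e(-1)f(-1)\mathbbm{1}$ modulo vectors already in $U$, so $e(-1)f(-1)\mathbbm{1}\in U$ and the Sugawara $\omega$ is a combination of elements of $U$. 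Consequently $U$ is closed not only under the Heisenberg modes $h(n)$ and the modes of $E,F$, but also under all Virasoro modes $L(n)$, $n\in\mathbb{Z}$ — and this is essential, because it is precisely the modes $L(-2),L(-3),\dots$, and not merely the translation $L(-1)$, that break the symmetric pattern of $L(-1)$-descendants.

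The generation statement then rests on two claims, each proved by induction. The standard orbifold reduction gives the first: every fixed vector lies in the algebra generated by $h$ together with the bilinear currents $e(-a)e(-b)\mathbbm{1}$, $e(-a)f(-b)\mathbbm{1}$, $f(-a)f(-b)\mathbbm{1}$, shown by inducting on the number of $e,f$-letters in a PBW monomial and peeling off a pair as a bilinear, the commutator error terms having strictly fewer odd letters. The second claim, where the three chosen vectors must do their work, is that all these bilinears already lie in $U$. Here I would induct on the mode index: for $m\geq 3$ one has $L(-(m-1))E=2\,e(-m)e(-1)\mathbbm{1}+e(-1)^2\,L(-(m-1))\mathbbm{1}$, whose left side is the product $\omega_{(2-m)}E\in U$ and whose correction term expands into strictly lower bilinears and $\omega$-dressed vectors already in $U$; solving isolates $e(-m)e(-1)\mathbbm{1}$, and the analogous scheme with $F$ and $e(-1)f(-1)\mathbbm{1}$ handles the remaining bilinears, after which the modes $h(n)$ adjust the second index. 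The main obstacle is exactly this second induction: one must verify that the various Virasoro–Heisenberg descendants of $E$ are independent enough to recover each monomial — for instance $L(-1)^2E$ and $L(-2)E$ have pure two-$e$ parts proportional to $2e(-3)e(-1)\mathbbm{1}+e(-2)^2\mathbbm{1}$ and $2e(-3)e(-1)\mathbbm{1}+\tfrac{2}{k+2}e(-2)^2\mathbbm{1}$, which are linearly independent precisely because $\tfrac{2}{k+2}\neq1$. The heart of the proof is thus a non-degeneracy check uniform in $k$: the relevant coefficient matrices are rational functions of $k$ that must be shown nonsingular, a fact traceable to $L(k,0)$ having no null vector in the $e$-string below $e(-1)^{k+1}\mathbbm{1}$, with the degenerate lowest levels (where $E$ or $F$ may vanish) treated separately.
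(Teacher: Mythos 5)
There is nothing in the paper to compare your argument with: the paper's entire justification of this lemma is the sentence ``By induction, we can prove the following lemma,'' so any proof has to stand on its own. Yours does not, because its first key point fails at small level. A direct computation gives
\begin{equation*}
E_1F \;=\; 4(k-2)\,e(-1)f(-1)\mathbbm{1}\;+\;2\,h(-1)^2\mathbbm{1}\;+\;2\,h(-2)\mathbbm{1}
\end{equation*}
(one can check this formula against the identity $e(-1)f(-1)\mathbbm{1}=\tfrac{1}{2}\bigl(h(-1)^2+h(-2)\bigr)\mathbbm{1}$, valid in $L(1,0)$, under which it correctly gives $E_1F=0$ at $k=1$), so the coefficient you need to be nonzero vanishes exactly at $k=2$. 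Moreover, at $k=2$ this cannot be repaired by choosing a different product: by the paper's own remark on $k=2$, $L(2,0)^{\langle\sigma_1\rangle}\cong V_{\Z\gamma}\otimes L_{Vir}(\frac{1}{2},0)$ with $(\gamma,\gamma)=4$, and under this isomorphism $h(-1)\mathbbm{1}$, $e(-1)^2\mathbbm{1}$ and $f(-1)^2\mathbbm{1}$ all lie in the vertex subalgebra $V_{\Z\gamma}\otimes\C\mathbbm{1}$, which is closed under every $n$-th product; hence the vertex algebra $U$ they generate is contained in $V_{\Z\gamma}\otimes\C\mathbbm{1}$ and contains neither $e(-1)f(-1)\mathbbm{1}$ nor $\omega$ nor anything of the Ising factor. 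So at $k=2$ the cornerstone $\omega\in U$ of your induction is false, and with it the whole Virasoro-mode machinery. At $k=1$ the situation is worse: $e(-1)^2\mathbbm{1}=f(-1)^2\mathbbm{1}=0$, so $U$ is just the Heisenberg subalgebra $M(1)$, which is a proper subalgebra of $L(1,0)^{\langle\sigma_1\rangle}\cong V_{\Z\beta}$, $(\beta,\beta)=8$ (it misses the charge-$4$ vectors such as $e(-3)e(-1)\mathbbm{1}\neq 0$); no separate treatment of ``degenerate levels'' can generate these from a zero generator.

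What this exposes is that the lemma as printed, quantified over all $k\geqslant 1$, is itself false for $k=1$, and for $k=2$ is true only under the convention that a vertex operator subalgebra automatically contains $\omega$ --- a defect the paper hides behind the words ``by induction.'' For $k\geqslant 3$ your plan is sound: the reduction to $r=1$ via a lifted inner automorphism conjugating $\sigma_1$ to $\sigma_r$ is correct, the displayed formula does put $e(-1)f(-1)\mathbbm{1}$, and hence the Sugawara vector $\omega=\frac{1}{2(k+2)}\bigl(\frac{1}{2}h(-1)^2\mathbbm{1}+2e(-1)f(-1)\mathbbm{1}-h(-2)\mathbbm{1}\bigr)$, into $U$, and your two-stage induction (reduce PBW monomials to bilinears, then produce all bilinears from $E$, $F$ and $e(-1)f(-1)\mathbbm{1}$ using the $L(-n)$ and $h(n)$ modes) has the right shape, although the nonsingularity of the coefficient matrices is asserted rather than proved. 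As written, however, the proposal's central computational claim is false at $k=2$, and the statement it aims at must first be corrected (restricted to $k\geqslant 3$, or with the cases $k=1,2$ handled by the explicit lattice descriptions above) before any version of it can be proved.
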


Now we are poised to give the classification of the irreducible modules for the orbifold vertex operator subalgebra $L(k,0)^{\langle \sigma_{r} \rangle}$ of $L(k,0)$ for each $r = 1$, $2$, $3$. The same result for $\sigma_2$ was given  in the literature \cite{JW2}, here we include each $\sigma_r,  r = 1, 2, 3$ for completeness.

\begin{thm}   \label{Z_2 decomposition}
For $r = 1$, $2$, or $3$, there are $4(k+1)$ irreducible $L(k, 0)^{\langle \sigma_r \rangle}$-modules up to isomorphism as follows:
$$
L(k,i)^{\sigma_r,+},  \ L(k,i)^{\sigma_r,-}, \ \overline{L(k,i)}^{\sigma_r,+}, \  \overline{L(k,i)}^{\sigma_r,-}, \ 0\leqslant i\leqslant k.
$$
\end{thm}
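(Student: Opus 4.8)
The plan is to run the orbifold machinery of Theorem \ref{DRX17 thm3} on the group $\langle\sigma_r\rangle\cong\mathbb{Z}_2$, using the twisted modules already assembled. Since $L(k,0)$ is regular, selfdual and of CFT type and $\langle\sigma_r\rangle$ is solvable, the fixed-point algebra $L(k,0)^{\langle\sigma_r\rangle}$ is again regular, selfdual and of CFT type, so Theorem \ref{DRX17 thm3} applies: every irreducible $L(k,0)^{\langle\sigma_r\rangle}$-module is isomorphic to some $M_\lambda$ with $M$ an irreducible $g$-twisted $L(k,0)$-module, $g\in\{1,\sigma_r\}$, and $\lambda\in\Lambda_{G_M,\alpha_M}$. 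By \cite{FZ92} the untwisted case $g=1$ contributes the modules $L(k,i)$, $0\leqslant i\leqslant k$, and by the preceding lemma the $\sigma_r$-twisted case contributes $\overline{L(k,i)}^{\sigma_r}$, $0\leqslant i\leqslant k$. Thus it remains to analyze the $G_M$-decomposition of each of these $2(k+1)$ modules and to verify pairwise inequivalence.

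First I would compute $G_M=\{h\in\langle\sigma_r\rangle\mid M\circ h\cong M\}$ in each case. For the $\sigma_r$-twisted modules this is immediate: the basic fact recorded above that the twisting element lies in its own stabilizer gives $\sigma_r\in G_M$, whence $G_M=\langle\sigma_r\rangle$. For the untwisted modules $L(k,i)$ the crucial point is that $\sigma_r$ is an \emph{inner} automorphism, realized as $\sigma_r=e^{2\pi\sqrt{-1}h^{(r)'}(0)}$ by the Proposition. Because $h^{(r)'}(0)$ acts semisimply with rational eigenvalues on each $L(k,i)$, the operator $e^{2\pi\sqrt{-1}h^{(r)'}(0)}$ is a well-defined linear isomorphism of $L(k,i)$ intertwining $Y_{L(k,i)}(\sigma_r v,z)$ with $Y_{L(k,i)}(v,z)$; equivalently $L(k,i)\circ\sigma_r\cong L(k,i)$, so again $G_M=\langle\sigma_r\rangle$. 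At the level of the lowest-weight space this merely reflects that every finite-dimensional irreducible $\mathfrak{sl}_2(\mathbb{C})$-module is invariant under an inner involution.

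Next, with $G_M=\langle\sigma_r\rangle\cong\mathbb{Z}_2$ in every case, I would resolve the twisted group algebra $\mathbb{C}^{\alpha_M}[G_M]$. Since $H^2(\mathbb{Z}_2,\mathbb{C}^*)=0$, the cocycle $\alpha_M$ is a coboundary and $\mathbb{C}^{\alpha_M}[\langle\sigma_r\rangle]\cong\mathbb{C}[\mathbb{Z}_2]\cong\mathbb{C}\oplus\mathbb{C}$; concretely one normalizes $\phi(\sigma_r)$ so that $\phi(\sigma_r)^2=\mathrm{id}$ and splits $M$ into the $\pm1$-eigenspaces $M^{\pm}$ of $\phi(\sigma_r)$. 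Thus $\Lambda_{G_M,\alpha_M}$ has exactly two elements, and Theorem \ref{DRX17 thm1} then says each eigenspace is an irreducible $L(k,0)^{\langle\sigma_r\rangle}$-module and that the two are inequivalent. Labelling these eigenspaces $L(k,i)^{\sigma_r,\pm}$ and $\overline{L(k,i)}^{\sigma_r,\pm}$ produces exactly $4(k+1)$ modules.

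Finally I would establish pairwise inequivalence and hence the count. The $2(k+1)$ twisted modules $\{L(k,i)\}\cup\{\overline{L(k,i)}^{\sigma_r}\}$ lie in distinct $\langle\sigma_r\rangle$-orbits: the untwisted and $\sigma_r$-twisted sectors are automatically separated since the abelian group action preserves the twist type, and within each sector distinct $i$ give non-isomorphic modules which, having full stabilizer, each form their own orbit. Theorem \ref{DRX17 thm2} then makes modules coming from different $M$ inequivalent, while Theorem \ref{DRX17 thm1}(3) separates the $+$ and $-$ pieces of a single $M$; together with the exhaustion from Theorem \ref{DRX17 thm3} this gives precisely the $4(k+1)$ listed modules. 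The main obstacle is the $G_M$-computation for the untwisted modules, i.e.\ verifying $\sigma_r$-stability of every $L(k,i)$; once the inner realization $\sigma_r=e^{2\pi\sqrt{-1}h^{(r)'}(0)}$ is used to supply the intertwining isomorphism, the remainder is bookkeeping with the cited orbifold theorems.
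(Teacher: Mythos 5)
Your proposal is correct and runs on essentially the same machinery as the paper's proof: exhaustion from Theorem \ref{DRX17 thm3} (using that $L(k,0)$ is regular, selfdual, of CFT type and $\langle\sigma_r\rangle$ is solvable), Li's $\Delta$-operator classification of the irreducible $\sigma_r$-twisted modules, and the splitting of each stable irreducible twisted or untwisted module into two inequivalent irreducible $L(k,0)^{\langle\sigma_r\rangle}$-modules. The only minor differences are in the details: you establish $\sigma_r$-stability of $L(k,i)$ by conjugating with the inner realization $e^{2\pi\sqrt{-1}h^{(r)'}(0)}$ and split via $H^2(\mathbb{Z}_2,\mathbb{C}^*)=0$ together with Theorems \ref{DRX17 thm1} and \ref{DRX17 thm2}, whereas the paper argues more concretely---quoting \cite{DM97} for the irreducibility of the graded pieces of the twisted modules, deducing stability from the pairwise-distinct lowest weights (as in Lemma \ref{irr K_M subgroup}), and exhibiting explicit lowest-weight generators (e.g.\ $v^{r,i,i}$, $v^{r,i,i-1}$, $f^{(r)}_{-\frac{1}{2}}v^{r,i,i}$) for every summand, data that the later sections of the paper rely on.
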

\begin{proof}
For $i = 0$, $r = 1$, $2$, $3$, the decomposition of $L(k,0)$ into inequivalent irreducible $L(k,0)^{\langle \sigma_{r} \rangle}$-modules is
\begin{equation}   \label{Z_2 decomposition eq0}
L(k,0) = L(k,0)^{\sigma_r,+} \oplus L(k,0)^{\sigma_r,-},
\end{equation}
where $L(k,0)^{\sigma_r,+}$ is generated by the lowest weight vector $\mathbbm{1}$ with the lowest weight $0$ as an irreducible $L(k, 0)^{\langle \sigma_{r} \rangle}$-module, and $L(k,0)^{\sigma_r,-}$ is generated by the lowest weight vector $e^{(r)}(-1)\mathbbm{1}$ with the lowest weight $1$ as an irreducible $L(k,0)^{\langle \sigma_{r} \rangle}$-module.
For $0 < i \leqslant k$, the decomposition of $L(k,i)$ into inequivalent irreducible $L(k,0)^{\langle \sigma_r \rangle}$-modules is
\begin{equation}   \label{Z_2 decomposition eq1}
L(k,i) = L(k,i)^{\sigma_r,+} \oplus L(k,i)^{\sigma_r,-},
\end{equation}
where $L(k,i)^{\sigma_r,+}$ is generated by the lowest weight vector $v^{r,i,i}$ with the lowest weight $\frac{i(i+2)}{4(k+2)}$ as an irreducible $L(k, 0)^{\langle \sigma_{r} \rangle}$-module, and $L(k,i)^{\sigma_r,-}$ is generated by the lowest weight vector $v^{r,i,i-1}$ with the lowest weight $\frac{i(i+2)}{4(k+2)}$ as an irreducible $L(k, 0)^{\langle \sigma_{r} \rangle}$-module.

From \cite{DM97}, we know that if $V$ is a vertex operator algebra with an automorphism $g$ of order $T$, and $M=\oplus_{n\in \frac{1}{T}\mathbb{Z}_+}M(n)$ is an irreducible $g$-twisted admissible module of $V$, then $M^t=\oplus_{n\in \frac{t}{T}+\mathbb{Z}_+}M(n)$ is an irreducible $V^{\langle g \rangle}$-module for $t=0, \cdots, T-1$.
Therefore, the decomposition of $\overline{L(k,i)}^{\sigma_r} (0 \leqslant i \leqslant k)$ into inequivalent irreducible $L(k,0)^{\langle \sigma_{r} \rangle}$-modules is
\begin{equation}    \label{Z_2 decomposition eq2}
\overline{L(k,i)}^{\sigma_r} = \overline{L(k,i)}^{\sigma_r,+} \oplus \overline{L(k,i)}^{\sigma_r,-},
\end{equation}
where $\overline{L(k,i)}^{\sigma_r,+}$ is generated by the lowest weight vector $v^{r,i,i}$ with the lowest weight $\frac{i(i-k)}{4(k+2)} + \frac{k}{16}$ as an irreducible $L(k, 0)^{\langle \sigma_{r} \rangle}$-module, and $\overline{L(k,i)}^{\sigma_r,-}$ is generated by the lowest weight vector $f^{(r)}_{-\frac{1}{2}}v^{r,i,i}$ with the lowest weight $\frac{i(i-k)}{4(k+2)} + \frac{k}{16}+\frac{1}{2}$ as an irreducible $L(k,0)^{\langle \sigma_{r} \rangle}$-module. Thus there are $4(k+1)$ inequivalent irreducible $L(k,0)^{\langle \sigma_r \rangle}$-modules for each $r = 1$, $2$, $3$.
\end{proof}

\vskip 0.3cm
For $0\leqslant i\leqslant k, \ 0\leqslant j\leqslant k,\ 0\leqslant l\leqslant k$ such that $i+j+l\in 2\mathbb{Z}$, following \cite{JW2}, we define

\[\mbox{sign}(i,j,l)^{+}=\begin{cases}+,\ & \mbox{if} \ i+j-l\in 4{\mathbb{Z}},\cr
-,\ &\mbox{if} \  i+j-l\notin 4{\mathbb{Z}},\end{cases}\]

and

\[\mbox{sign}(i,j,l)^{-}=\begin{cases}-,\ & \mbox{if} \ i+j-l\in 4{\mathbb{Z}},\cr
+,\ &\mbox{if} \  i+j-l\notin 4{\mathbb{Z}}.\end{cases}\]

For $r=1,2,$ or $3$, we have the following fusion rules for the ${\Z}_{2}$-orbifold affine vertex operator algebra $L(k,0)^{\sigma_r}$.

\begin{thm}\label{fusion-aff} The fusion rules for the ${\mathbb{Z}}_{2}$-orbifold affine vertex operator algebra $L(k,0)^{\la \sigma_r \ra}$ are as follows:
\begin{eqnarray}\label{fusion.untwist1.}
L(k,i)^{\sigma_r,+}\boxtimes L(k,j)^{\sigma_r,\pm}=\sum\limits_{\tiny{\begin{split}|i-j|\leqslant l\leqslant i+j \\  i+j+l\in 2\mathbb{Z} \ \ \ \\ i+j+l\leqslant 2k\ \ \ \end{split}}} L(k,l)^{\sigma_r, \mbox{sign}(i,j,l)^{\pm}},
\end{eqnarray}
%where $|i-j|\leqslant l\leqslant i+j, \ i+j+l\in 2\mathbb{Z},\ i+j+l\leqslant 2k.$

\begin{eqnarray}\label{fusion.untwist2}
L(k,i)^{\sigma_r,-}\boxtimes L(k,j)^{\sigma_r,\pm}=\sum\limits_{\tiny{\begin{split}|i-j|\leqslant l\leqslant i+j \\  i+j+l\in 2\mathbb{Z} \ \ \ \\ i+j+l\leqslant 2k\ \ \ \end{split}}}  L(k,l)^{\sigma_r, \mbox{sign}(i,j,l)^{\mp}},
\end{eqnarray}
%where $|i-j|\leqslant l\leqslant i+j, \ i+j+l\in 2\mathbb{Z},\ i+j+l\leqslant 2k.$

\begin{eqnarray}\label{fusion.twist1}
L(k,i)^{\sigma_r,+}\boxtimes \overline{L(k,j)}^{\sigma_r,\pm}=\sum\limits_{\tiny{\begin{split}|i-j|\leqslant l\leqslant i+j \\  i+j+l\in 2\mathbb{Z} \ \ \ \\ i+j+l\leqslant 2k\ \ \ \end{split}}} \overline{L(k,l)}^{\sigma_r, \mbox{sign}(i,j,l)^{\pm}},
\end{eqnarray}
%where $|i-j|\leqslant l\leqslant i+j, \ i+j+l\in 2\mathbb{Z},\ i+j+l\leqslant 2k.$

\begin{eqnarray}\label{fusion.twist2}
L(k,i)^{\sigma_r,-}\boxtimes \overline{L(k,j)}^{\sigma_r,\pm}=\sum\limits_{\tiny{\begin{split}|i-j|\leqslant l\leqslant i+j \\  i+j+l\in 2\mathbb{Z} \ \ \ \\ i+j+l\leqslant 2k\ \ \ \end{split}}}  \overline{L(k,l)}^{\sigma_r, \mbox{sign}(i,j,l)^{\mp}}.
\end{eqnarray}
%where $|i-j|\leqslant l\leqslant i+j, \ i+j+l\in 2\mathbb{Z},\ i+j+l\leqslant 2k.$
\end{thm}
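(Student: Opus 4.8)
The plan is to cut the work by a factor of three and then reduce everything to the classical fusion rules of $L(k,0)$. First I would show that $\sigma_1,\sigma_2,\sigma_3$ are conjugate inside $\Aut(L(k,0))$. By the proposition above each $\sigma_r$ equals $e^{2\pi\sqrt{-1}h^{(r)'}(0)}$ and is therefore the inner automorphism attached to the $r$-th $\mathfrak{sl}_2$-triple $\{h^{(r)},e^{(r)},f^{(r)}\}$. Since all three triples are standard, there is an element of $\mathrm{SL}_2(\C)$ carrying the second triple to the $r$-th, and its lift $g_r$ to an inner automorphism of $L(k,0)$ satisfies $g_r\sigma_2 g_r^{-1}=\sigma_r$ (because $g_r$ sends $h^{(2)'}\mapsto h^{(r)'}$). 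Hence $g_r$ induces a vertex operator algebra isomorphism $L(k,0)^{\langle\sigma_2\rangle}\to L(k,0)^{\langle\sigma_r\rangle}$ that sends $v^{2,i,i}\mapsto v^{r,i,i}$ up to scalar, and so matches the four families $L(k,i)^{\sigma_2,\pm}$, $\overline{L(k,i)}^{\sigma_2,\pm}$ with $L(k,i)^{\sigma_r,\pm}$, $\overline{L(k,i)}^{\sigma_r,\pm}$ label for label. This reduces the theorem to the case $r=2$, which is \cite{JW2}; what remains is to reproduce that computation.

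For a fixed $r$ the engine is the simple current $\mathbb{Z}_2$-extension $L(k,0)=L(k,0)^{\langle\sigma_r\rangle}\oplus L(k,0)^{\sigma_r,-}$ together with the classical fusion rules $L(k,i)\boxtimes L(k,j)=\sum_l L(k,l)$ of $L(k,0)$ (see \cite{FZ92}), the sum being over $|i-j|\leqslant l\leqslant i+j$, $i+j+l\in 2\Z$, $i+j+l\leqslant 2k$, each with multiplicity one. The key observation is that $L(k,0)^{\langle\sigma_r\rangle}$ is generated by $h^{(r)}(-1)\mathbbm{1}$, $e^{(r)}(-1)^2\mathbbm{1}$, $f^{(r)}(-1)^2\mathbbm{1}$, whose $h^{(r)}(0)$-weights are $0,+4,-4$; hence the orbifold algebra preserves the $h^{(r)}(0)$-weight modulo $4$, and this $\Z/4\Z$-charge is exactly what separates the two components of each $L(k,i)$ and each $\overline{L(k,i)}^{\sigma_r}$: the $+$ component carries the charge of its extreme vector $v^{r,i,i}$ (of $h^{(r)}(0)$-weight $-i$), while the $-$ component carries charge $-i+2$.

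With this in hand I would restrict each one-dimensional space $I_{L(k,0)}\binom{L(k,l)}{L(k,i)\,L(k,j)}$ to $L(k,0)^{\langle\sigma_r\rangle}$ and read off the leading term of $\mathcal Y(v^{r,i,i},z)v^{r,j,j}$: its $h^{(r)}(0)$-weight is $-(i+j)$, which lies in the $+$ component of $L(k,l)$ precisely when $-(i+j)\equiv -l \pmod 4$, i.e. when $i+j-l\in 4\Z$. This yields $L(k,i)^{\sigma_r,+}\boxtimes L(k,j)^{\sigma_r,+}=\sum_l L(k,l)^{\sigma_r,\mathrm{sign}(i,j,l)^{+}}$, and the three remaining sign patterns of \eqref{fusion.untwist1.}--\eqref{fusion.untwist2} follow by replacing an extreme input vector $v^{r,j,j}$ with $v^{r,j,j-1}$ (shifting the output charge by $2$) and by the symmetry of fusion rules in Proposition \ref{fusionsymm.}. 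For the twisted cases \eqref{fusion.twist1}--\eqref{fusion.twist2} I would use Li's $\Delta(h^{(r)'},z)$-construction to identify $I_{L(k,0)}\binom{\overline{L(k,l)}^{\sigma_r}}{L(k,i)\,\overline{L(k,j)}^{\sigma_r}}$ with the untwisted space $I_{L(k,0)}\binom{L(k,l)}{L(k,i)\,L(k,j)}$, so that the same count and the same $h^{(r)}(0)$-charge bookkeeping apply, the two twisted components now being separated by the integer versus half-integer $L_0$-grading.

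The main obstacle is the passage from dimensions of $L(k,0)$-intertwining spaces to those of $L(k,0)^{\langle\sigma_r\rangle}$-intertwining spaces: one must prove that restriction is faithful and that, for each admissible $l$, exactly one of the two graded components actually occurs (rather than both with smaller total multiplicity). This is where I would invoke the simple current extension structure (Theorem \ref{sc}) together with the rationality, $C_2$-cofiniteness and selfduality of $L(k,0)^{\langle\sigma_r\rangle}$ to guarantee that the graded pieces of a $V$-intertwining operator are genuine orbifold intertwining operators and that their multiplicities sum to the $V$-multiplicity; the $\Z/4\Z$-charge then forces the single surviving sign. The twisted sector carries the extra bookkeeping of matching half-integer gradings with the $\pm$ labels, which is the most delicate step but otherwise runs parallel to the untwisted one.
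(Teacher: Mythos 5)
Your proposal has to be measured against the fact that the paper itself gives \emph{no} proof of Theorem \ref{fusion-aff}: the sign conventions and the fusion rules are imported from \cite{JW2}, where they are established for $\sigma_2$, and the paper states the result for all $r=1,2,3$ without further comment. Your first step --- taking $A\in\mathrm{SL}_2(\C)$ carrying the second $\mathfrak{sl}_2$-triple to the $r$-th, lifting it to $g_r\in\Aut(L(k,0))$ so that $g_r\sigma_2g_r^{-1}=\sigma_r$, and letting $g_r$ identify $L(k,0)^{\la\sigma_2\ra}$ with $L(k,0)^{\la\sigma_r\ra}$ together with the four families of modules label for label --- is correct, and it is precisely the justification the paper leaves implicit for passing from the $\sigma_2$ case of \cite{JW2} to general $r$. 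Your charge bookkeeping is also sound: all $h^{(r)}(0)$-eigenvalues in $L(k,0)$ are even, $\sigma_r$ acts on an eigenvector of eigenvalue $n$ by $(\sqrt{-1})^{\,n}$, so the orbifold is exactly the charge $\equiv 0 \pmod 4$ part and its operators preserve charge mod $4$ on every module; restricting the one-dimensional space of $L(k,0)$-intertwining operators of type $\left(\begin{array}{c}L(k,l)\\ L(k,i)\ L(k,j)\end{array}\right)$ (via Li's $\Delta$-operator of \cite{Li97-2} in the twisted case) then produces a nonzero orbifold intertwining operator landing in the component predicted by $\mbox{sign}(i,j,l)^{\pm}$. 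This correctly reproduces the sign pattern and gives the lower bound $N\geqslant 1$ for every summand on the right-hand sides of \eqref{fusion.untwist1.}--\eqref{fusion.twist2}.

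The genuine gap is the upper bound, which you yourself flag as the main obstacle and then dispose of by invoking Theorem \ref{sc}. Theorem \ref{sc} classifies irreducible modules of a simple current extension; it says nothing about intertwining operators and does not yield your key claim that the orbifold multiplicities ``sum to the $V$-multiplicity.'' That claim, namely
\[
N_{L(k,i),\,L(k,j)}^{L(k,l)}\;=\;N_{W^{1},\,W^{2}}^{L(k,l)^{\sigma_r,+}}+N_{W^{1},\,W^{2}}^{L(k,l)^{\sigma_r,-}},
\]
with the left side computed over $L(k,0)$ and the right side over $L(k,0)^{\la\sigma_r\ra}$ for fixed components $W^1,W^2$, is true but is itself a theorem: it requires the induction/restriction formalism for fusion rules across a simple current extension (induction as a tensor functor plus Frobenius reciprocity in the vertex tensor category, and a $G$-crossed version of this to handle the twisted sector), or else an independent counting argument such as the quantum-dimension machinery built on \cite{DRX17}. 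Without such input, your argument also cannot exclude summands that the charge count never sees: nothing in the proposal rules out a twisted-sector module $\overline{L(k,l)}^{\sigma_r,\pm}$, or a module $L(k,l)^{\sigma_r,\pm}$ with $l$ outside the range $|i-j|\leqslant l\leqslant \min(i+j,2k-i-j)$, from occurring in $L(k,i)^{\sigma_r,+}\boxtimes L(k,j)^{\sigma_r,+}$, because restriction of $L(k,0)$-intertwining operators only bounds multiplicities from below. So your reduction to $r=2$ stands as a correct complement to the paper, but your reconstruction of the $r=2$ computation is incomplete at its decisive step.
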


Now we are in a position to classify and construct all the irreducible modules for the orbifold vertex operator algebra $L(k,0)^{K}$. Since $L(k,i)(0 \leqslant i \leqslant k)$ are all the inequivalent irreducible $L(k,0)$-modules which can be viewed as id-twisted $L(k,0)$-modules, we first determine the subgroup ${K}_{L(k,i)}$ of ${K}$ which contains   $\sigma \in K$ such that $L(k,i)$ is $\sigma$-stable. From now on, we denote id by $\sigma_0$ for convenience.

\begin{lem}  \label{irr K_M subgroup}
${K}_{L(k,i)} = K$ for any $0 \leqslant i \leqslant k$.
\end{lem}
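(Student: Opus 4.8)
The plan is to show that each of the four elements of $K$ stabilizes the irreducible module $L(k,i)$; since $\sigma_0=\mathrm{id}$ trivially does so, it suffices to treat $\sigma_1,\sigma_2,\sigma_3$. By the definition of $K_{L(k,i)}$, I must verify that $L(k,i)\circ\sigma_r\cong L(k,i)$ as $L(k,0)$-modules for $r=1,2,3$, where $Y_{L(k,i)\circ\sigma_r}(v,z)=Y_{L(k,i)}(\sigma_r v,z)$. My approach is purely conceptual, reducing everything to the classification of irreducibles of $L(k,0)$ together with the fact that the $\circ$-action preserves conformal weights.

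First I would note that, by the general properties of the $\circ$-action recalled in Section 2, an $\mathrm{id}$-twisted module is carried to a $\sigma_r^{-1}\,\mathrm{id}\,\sigma_r=\mathrm{id}$-twisted module, so $L(k,i)\circ\sigma_r$ is again an ordinary $L(k,0)$-module, and twisting by an automorphism preserves irreducibility. Since $L(k,0)$ is rational with complete list of irreducibles $\{L(k,j)\mid 0\leqslant j\leqslant k\}$ from \cite{FZ92}, it follows that $L(k,i)\circ\sigma_r\cong L(k,j)$ for some $0\leqslant j\leqslant k$, and it remains only to pin down $j$.

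The crucial observation is that $\sigma_r$ fixes the conformal vector, $\sigma_r\omega=\omega$, which holds automatically because $\sigma_r$ is a vertex operator algebra automorphism. Hence $Y_{L(k,i)\circ\sigma_r}(\omega,z)=Y_{L(k,i)}(\omega,z)$, so $L(k,i)\circ\sigma_r$ and $L(k,i)$ carry the same $L(0)$-grading and in particular the same conformal weight. As the conformal weight of $L(k,j)$ is $\frac{j(j+2)}{4(k+2)}$ and $j\mapsto j(j+2)$ is strictly increasing on $\{0,1,\dots,k\}$, these weights are pairwise distinct; matching the weight of $L(k,i)\circ\sigma_r\cong L(k,j)$ with that of $L(k,i)$ forces $j=i$. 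Therefore $L(k,i)\circ\sigma_r\cong L(k,i)$, giving $\sigma_r\in K_{L(k,i)}$ for $r=1,2,3$, and combined with $\sigma_0=\mathrm{id}$ we conclude $K_{L(k,i)}=K$.

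I do not expect a genuine obstacle here; the only point deserving care is the claim that $L(k,i)\circ\sigma_r$ remains an \emph{ordinary irreducible} $L(k,0)$-module (so that the classification of \cite{FZ92} applies and the conformal-weight comparison is legitimate), which is exactly what the first step supplies. Should one wish to avoid the conformal-weight computation, an equally short alternative is available: the lowest weight space of $L(k,i)\circ\sigma_r$ is the same underlying space as that of $L(k,i)$ with $\mathfrak{sl}_2(\C)$ acting through $\sigma_r$, and since $\sigma_r$ is an automorphism of $\mathfrak{sl}_2(\C)$ this twist preserves the isomorphism class of the $(i+1)$-dimensional irreducible $\mathfrak{sl}_2(\C)$-module, once again yielding $j=i$.
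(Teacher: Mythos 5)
Your proof is correct and follows essentially the same route as the paper: both arguments rest on the observation that $L(k,i)\circ\sigma_r$ has the same lowest (conformal) weight as $L(k,i)$, and that the weights $\frac{i(i+2)}{4(k+2)}$, $0\leqslant i\leqslant k$, are pairwise distinct, forcing $L(k,i)\circ\sigma_r\cong L(k,i)$. Your write-up merely makes explicit the background facts the paper leaves implicit (that $\circ\,\sigma_r$ preserves ordinary irreducible modules, that $\sigma_r\omega=\omega$, and the classification of irreducibles from \cite{FZ92}).
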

\begin{proof}

By definition, $L(k,i)$ and $L(k,i) \circ \sigma_r$ have the same lowest weight. Observe that the lowest weights $\frac{i(i+2)}{4(k+2)}(0 \leqslant i \leqslant k)$ are pairwise different which implies that all the irreducible $L(k, 0)$-modules $L(k,i)(0 \leqslant i \leqslant k)$ are $\sigma_r$-stable for any $r = 0$, $1$, $2$, $3$. Thus, ${K}_{L(k,i)} = K$.
\end{proof}

Define $\phi(\sigma_r) ( r = 0, 1, 2, 3 )$ as follows:
\begin{align}
 \phi(\sigma_0): v^{i,j} & \mapsto v^{i,j},       \label{phi 0} \\
 \phi(\sigma_1): v^{i,j} & \mapsto (-1)^jv^{i,j},  \label{phi 1}\\
 \phi(\sigma_2): v^{i,j} & \mapsto v^{i,i-j},      \label{phi 2}\\
 \phi(\sigma_3): v^{i,j} & \mapsto (-1)^{i-j}v^{i,i-j}. \label{phi 3}
\end{align}
It is easy to verify that $\phi(\sigma_r)$ for $r = 0$, $1$, $2$, $3$ are $L(k,0)$-module isomorphisms.

\begin{lem} \label{irr decomposition}
For each $0 < i \leqslant k$, we have the following irreducible $L(k,0)^{K}$-module decomposition.
\begin{enumerate}
\item
If $i \in 2\mathbb{Z}+1$, then
\begin{equation}
L(k,i) = W(i) \otimes L(k,i)^{\sigma_1,+} = L(k,i)^{\sigma_1,+} \oplus L(k,i)^{\sigma_1,-},
\end{equation}
where $W(i)$ is the unique two-dimensional irreducible module of the twisted group algebra $\mathbb{C}^{\alpha_{L(k,i)}}[K]$. Moreover, $L(k,i)^{\sigma_1,+}$ and $L(k,i)^{\sigma_1,-}$ are isomorphic irreducible $L(k,0)^{K}$-modules.
\item
If $i = 2$, then
\begin{equation}
L(k,2) = \bigoplus_{j=0}^{j=3}L(k,2)^{(j)},
\end{equation}
where $L(k,2)^{(0)}$, $L(k,2)^{(1)}, L(k,2)^{(2)}$ and $L(k,2)^{(3)}$ are the  irreducible $L(k,0)^{K}$-modules generated by the lowest weight vectors $v^{2,0}+v^{2,2}$, $v^{2,0}-v^{2,2}$, $v^{2,1}$, and $h(-1)v^{2,1}$ with the lowest weights $\frac{2}{k+2}$,  $\frac{2}{k+2}$, $\frac{2}{k+2}$, and $\frac{k+4}{k+2}$, respectively.
\item
If $i \in 2\mathbb{Z}_{>1}$, then
\begin{equation}
L(k,i) = \bigoplus_{j=0}^{j=3}L(k,i)^{(j)},
\end{equation}
where $L(k,i)^{(0)}$, $L(k,i)^{(1)}, L(k,i)^{(2)}$, and $L(k,i)^{(3)}$ are the irreducible $L(k,0)^{K}$-modules generated by the lowest weight vectors $v^{i,0} + v^{i,i}$,  $v^{i,0} - v^{i,i}$, $v^{i,1} + v^{i,i-1}$, and $v^{i,1} - v^{i,i-1}$ with the same lowest weight $\frac{i(i+2)}{4(k+2)}$, respectively.
\end{enumerate}
\end{lem}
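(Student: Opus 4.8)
The plan is to apply the decomposition theory of \cite{DRX17} recorded in Theorem \ref{DRX17 thm1} to the ordinary module $M=L(k,i)$, regarded as a $\sigma_0$-twisted module, with the group $G=K$. By Lemma \ref{irr K_M subgroup} we have $K_{L(k,i)}=K$, and the operators $\phi(\sigma_r)$ of \eqref{phi 0}--\eqref{phi 3} supply the projective representation of $K$ on $L(k,i)$ that enters the decomposition $L(k,i)=\bigoplus_{\lambda}W_\lambda\otimes M_\lambda$. Hence the whole argument reduces to two tasks: first, identifying the two-cocycle $\alpha_{L(k,i)}$, equivalently the structure of the twisted group algebra $\mathbb{C}^{\alpha_{L(k,i)}}[K]$; and second, matching the multiplicity spaces $M_\lambda$ with the explicit lowest weight vectors named in the statement.

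First I would pin down the cocycle by a direct computation on the bottom basis $\{v^{i,j}\}$. Using \eqref{phi 1} and \eqref{phi 2} one finds $\phi(\sigma_1)\phi(\sigma_2)v^{i,j}=(-1)^{i-j}v^{i,i-j}$ and $\phi(\sigma_2)\phi(\sigma_1)v^{i,j}=(-1)^{j}v^{i,i-j}$, so that $\phi(\sigma_1)\phi(\sigma_2)=(-1)^i\phi(\sigma_2)\phi(\sigma_1)$, while $\phi(\sigma_1)^2=\phi(\sigma_2)^2=\mathrm{id}$. Consequently, for $i$ even the operators $\phi(\sigma_r)$ genuinely commute, $\alpha_{L(k,i)}$ is a coboundary, and $\mathbb{C}^{\alpha_{L(k,i)}}[K]\cong\mathbb{C}[K]$ has the four linear characters $\chi_0,\chi_1,\chi_2,\chi_3$; for $i$ odd they anticommute, $\alpha_{L(k,i)}$ represents the nontrivial class of $H^2(K,\mathbb{C}^*)\cong\mathbb{Z}_2$, and $\mathbb{C}^{\alpha_{L(k,i)}}[K]\cong M_2(\mathbb{C})$, which has a single irreducible module $W(i)$ of dimension $2$. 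This dichotomy is precisely the split between the odd case and the even cases, and is the conceptual heart of the proof.

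For odd $i$ the decomposition collapses to $L(k,i)=W(i)\otimes M_\lambda$ for one irreducible $L(k,0)^K$-module $M_\lambda$, occurring with multiplicity $\dim W(i)=2$. Restricting to $L(k,0)^{\langle\sigma_1\rangle}$ and invoking Theorem \ref{Z_2 decomposition}, the $\pm1$-eigenspaces of $\phi(\sigma_1)$ are $L(k,i)^{\sigma_1,\pm}$; since $\phi(\sigma_1)$ has eigenvalues $+1$ and $-1$ each exactly once on $W(i)$, we get $L(k,i)^{\sigma_1,\pm}=W(i)^{\pm}\otimes M_\lambda\cong M_\lambda$, so both are irreducible $L(k,0)^K$-modules isomorphic to $M_\lambda$. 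The explicit isomorphism $L(k,i)^{\sigma_1,+}\to L(k,i)^{\sigma_1,-}$ is realized by $\phi(\sigma_2)$, which anticommutes with $\phi(\sigma_1)$ (hence interchanges the eigenspaces) and centralizes $L(k,0)^K$ (hence intertwines the module structures). This yields part (1).

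For even $i$ the decomposition becomes $L(k,i)=\bigoplus_{j=0}^{3}L(k,i)^{(j)}$, each $\chi_j$-isotypic component $L(k,i)^{(j)}$ being irreducible by Theorem \ref{DRX17 thm1}, and it remains to exhibit a generating lowest weight vector in each. Computing $K$-characters via \eqref{phi 1}--\eqref{phi 3}, for $i>2$ the vectors $v^{i,0}+v^{i,i}$, $v^{i,0}-v^{i,i}$, $v^{i,1}+v^{i,i-1}$, $v^{i,1}-v^{i,i-1}$ afford $\chi_0,\chi_1,\chi_2,\chi_3$ respectively and all lie at the bottom weight $\frac{i(i+2)}{4(k+2)}$, so by irreducibility each generates its component; this gives part (3). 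The case $i=2$ is genuinely special because $v^{2,1}-v^{2,1}=0$: the three-dimensional bottom affords only $\chi_0,\chi_1,\chi_2$ (through $v^{2,0}\pm v^{2,2}$ and $v^{2,1}$), so $\chi_3$ is absent at the bottom. The main obstacle, and the only substantive point beyond the cocycle computation, is to show that $\chi_3$ first occurs one level higher and is realized by $h(-1)v^{2,1}$: one checks that $h(-1)v^{2,1}\neq0$, that it affords $\chi_3$ (using $\sigma_1 h=h$, $\sigma_2 h=-h$ together with $\phi(\sigma_1)v^{2,1}=-v^{2,1}$ and $\phi(\sigma_2)v^{2,1}=v^{2,1}$), and that, lying at the minimal attainable weight $\frac{k+4}{k+2}=\frac{2}{k+2}+1$ in $L(k,2)^{(3)}$, it is necessarily a lowest weight vector generating the irreducible module $L(k,2)^{(3)}$. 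This establishes part (2) and completes the argument.
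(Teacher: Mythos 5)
Your proposal is correct and follows essentially the same route as the paper: it uses Lemma \ref{irr K_M subgroup} together with the projective action \eqref{phi 0}--\eqref{phi 3}, derives the cocycle relation $\phi(\sigma_1)\phi(\sigma_2)=(-1)^i\phi(\sigma_2)\phi(\sigma_1)$ to split into the noncommutative ($i$ odd, unique two-dimensional module $W(i)$) and commutative ($i$ even, four linear characters) cases, and then invokes Theorem \ref{DRX17 thm1} to identify the isotypic components with the modules generated by the stated lowest weight vectors. Your treatment of the $i=2$ case (locating $\chi_3$ one level up at $h(-1)v^{2,1}$) and the explicit intertwiner $\phi(\sigma_2)$ in the odd case are somewhat more detailed than the paper's argument, but they are refinements of the same proof, not a different one.
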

\begin{proof}
 By Lemma \ref{irr K_M subgroup}, ${K}_{L(k,i)} = K$ for  $0 \leqslant i \leqslant k$, and   the ${K}
 $-orbit $L(k,i) \circ K$ of $L(k,i)$ only contains itself for any $0 \leqslant i \leqslant k$. The simplicity of $L(k,i)$ together with Schur's lemma shows that $\sigma_r \mapsto \phi(\sigma_r) ( r = 0, 1, 2, 3 )$ gives a projective representation of ${K}$ on $L(k,i)$. Let $\alpha_{L(k,i)}$ be the corresponding $2$-cocycle in $C^2(K, \mathbb{C}^*)$. Then $L(k,i)$ is a module for the twisted group algebra $\mathbb{C}^{\alpha_{L(k,i)}}[K]$ with relations
\[\phi(\sigma_1)\phi(\sigma_2) = \phi(\sigma_1\sigma_2) = \phi(\sigma_3), \quad \phi(\sigma_2)\phi(\sigma_1) = (-1)^i\phi(\sigma_2\sigma_1) = (-1)^i\phi(\sigma_3).  \]

If $i \in 2\mathbb{Z}+1$, the twisted group algebra $\mathbb{C}^{\alpha_{L(k,i)}}[K]$ is a non-commutative semisimple associative algebra which has only one irreducible module of dimension two, and we denote it by $W(i)$. Recall the decomposition \eqref{Z_2 decomposition eq1}, we can deduce that $L(k,i)^{\sigma_1,+}$ and $L(k,i)^{\sigma_1,-}$ are irreducible $L(k,0)^{K}$-modules. Then $L(k,i)^{\sigma_1,+}$ is the multiplicity of $W(i)$ in $L(k,i)$, and the decomposition $L(k,i) = W(i) \otimes L(k,i)^{\sigma_1,+} = L(k,i)^{\sigma_1,+} \oplus L(k,i)^{\sigma_1,-}$ holds. Then $L(k,i)^{\sigma_1,+}$ and $L(k,i)^{\sigma_1,-}$ are isomorphic irreducible $L(k,0)^{K}$-modules due to Theorem \ref{DRX17 thm1}.

If $i \in 2\mathbb{Z}$, the twisted group algebra $\mathbb{C}^{\alpha_{L(k,i)}}[K]$ is a commutative semisimple associative algebra which has four irreducible modules of dimension one.
For $i = 0$, the decomposition of $L(k,0)$ into irreducible $L(k,0)^{K}$-modules is exactly the case in Theorem \ref{V decomposition}.
For $i = 2$, let $L(k,2)^{(0)} ($resp. $L(k,2)^{(1)}, L(k,2)^{(2)}, L(k,2)^{(3)} )$ be the irreducible $L(k,0)^K$-module generated by the lowest weight vector $v^{2,0}+v^{2,2} ($resp. $v^{2,0}-v^{2,2}$, $v^{2,1}$, $h(-1)v^{2,1} )$ with the lowest weight $\frac{2}{k+2} ($resp. $\frac{2}{k+2}, \frac{2}{k+2}, \frac{k+4}{k+2} )$. Then $L(k,2)^{\sigma_1,+} = L(k,2)^{(0)} \oplus L(k,2)^{(1)}$ and $L(k,2)^{\sigma_1,-} = L(k,2)^{(2)} \oplus L(k,2)^{(3)}$. Thus $L(k,2) = \bigoplus_{j=0}^{j=3}L(k,2)^{(j)}$ is a decomposition of $L(k,2)$ into inequivalent irreducible $L(k,0)^{K}$-modules.
For $i \in 2\mathbb{Z}_{>1}$, let $L(k,i)^{(0)}$, $L(k,i)^{(1)}$, $L(k,i)^{(2)}$, and $L(k,i)^{(3)}$ be the irreducible $L(k,0)^{K}$-modules generated by the lowest weight vectors $v^{i,0} + v^{i,i}$, $v^{i,0} - v^{i,i}$, $v^{i,1} + v^{i,i-1}$, and $v^{i,1} - v^{i,i-1}$ with the same lowest weight $\frac{i(i+2)}{4(k+2)}$, respectively. Then $L(k,i)^{\sigma_1,+} = L(k,i)^{(0)} \oplus L(k,i)^{(1)}$ and $L(k,i)^{\sigma_1,-} = L(k,i)^{(2)} \oplus L(k,i)^{(3)}$.
From Theorem \ref{DRX17 thm1},
we know that $L(k,i)^{(j)}, j = 0, 1, 2, 3$ are inequivalent irreducible $L(k,0)^{K}$-modules for fixed $2\leqslant i\leqslant k$, $i\in 2\Z$.
Thus the decomposition of $L(k,i)$ into inequivalent irreducible $L(k,0)^{K}$-modules is
$L(k,i) = \oplus_{j=0}^{j=3}L(k,i)^{(j)}$.
\end{proof}

\begin{rmk}
If $0 \leqslant i \leqslant k$, $i \in 2\mathbb{Z}+1$, the set $\{ L(k,i)^{\sigma_r,\pm}|r = 1, 2, 3 \}$ is an equivalence class of irreducible $L(k,0)^{K}$-modules.
From now on, we denote any $L(k,0)^{K}$-module in $\{ L(k,i)^{\sigma_r,\pm}|r = 1, 2, 3 \}$ by $L(k,i)^+$ for the sake of simplicity.
\end{rmk}

\begin{rmk}
If $4 \leqslant i \leqslant k$, $i \in 2\mathbb{Z}$, the top levels of the irreducible $L(k,0)^{K}$-modules $L(k,i)^{(j)} (j = 0, 1, 2, 3 )$ are listed below.\\
If $i \in 4\mathbb{Z}$, the top level of $L(k,i)^{(0)}$ is a vector space with a bisis
\[ \{ v^{i,0} + v^{i,i}, v^{i,2} + v^{i,i-2}, \cdots , v^{i,\frac{i}{2}} \} ; \]
the top level of $L(k,i)^{(1)}$ is a vector space with a bisis
\[ \{ v^{i,0} - v^{i,i}, v^{i,2} - v^{i,i-2}, \cdots , v^{i,\frac{i}{2}-2}-v^{i,\frac{i}{2}+2} \} ; \]
the top level of $L(k,i)^{(2)}$ is a vector space with a bisis
\[ \{ v^{i,1}+v^{i,i-1}, v^{i,3}+v^{i,i-3}, \cdots , v^{i,\frac{i}{2}-1}+v^{i,\frac{i}{2}+1} \} ; \]
the top level of $L(k,i)^{(3)}$ is a vector space with a bisis
\[ \{ v^{i,1}-v^{i,i-1}, v^{i,3}-v^{i,i-3}, \cdots , v^{i,\frac{i}{2}-1}-v^{i,\frac{i}{2}+1} \} . \]
If $i \in 4\mathbb{Z}+2$, the top level of $L(k,i)^{(0)}$ is a vector space with a basis
\[ \{ v^{i,0} + v^{i,i}, v^{i,2} + v^{i,i-2}, \cdots , v^{i,\frac{i}{2}-1} + v^{i,\frac{i}{2}+1} \}  ; \]
the top level of $L(k,i)^{(1)}$ is a vector space with a bisis
\[ \{ v^{i,0} - v^{i,i}, v^{i,2} - v^{i,i-2}, \cdots , v^{i,\frac{i}{2}-1} - v^{i,\frac{i}{2}+1} \}  ; \]
the top level of $L(k,i)^{(2)}$ is a vector space with a bisis
\[ \{ v^{i,1}+v^{i,i-1}, v^{i,3}+v^{i,i-3}, \cdots , v^{i,\frac{i}{2}} \} ; \]
the top level of $L(k,i)^{(3)}$ is a vector space with a bisis
\[ \{ v^{i,1}-v^{i,i-1}, v^{i,3}-v^{i,i-3}, \cdots , v^{i,\frac{i}{2}-2}-v^{i,\frac{i}{2}+2} \} . \]
\end{rmk}

In conclusion, if $k$ is odd, there are $\frac{k+1}{2}$ inequivalent irreducible $L(k,0)^{K}$-modules coming from $L(k,i), 0 \leqslant i\leqslant k, i \in 2\mathbb{Z}+1$, and  $2(k+1)$ inequivalent irreducible $L(k,0)^{K}$-modules coming from $L(k,i), 0 \leqslant i\leqslant k, i \in 2\mathbb{Z}$. Therefore, there are $\frac{5(k+1)}{2}$ inequivalent irreducible $L(k,0)^{K}$-modules coming from $L(k,i), 0 \leqslant i\leqslant k$. If $k$ is even, there are $\frac{k}{2}$ inequivalent irreducible $L(k,0)^{K}$-modules coming from $L(k,i), 0 \leqslant i\leqslant k, i \in 2\mathbb{Z}+1$, and $2(k+2)$ inequivalent irreducible $L(k,0)^{K}$-modules coming from $L(k,i), 0 \leqslant i\leqslant k, i \in 2\mathbb{Z}$. Therefore, there are  $\frac{5k+8}{2}$ inequivalent irreducible $L(k,0)^{K}$-modules coming from $L(k,i), 0 \leqslant i\leqslant k$.

\begin{lem}   \label{twisted G_M subgroup}
For $0 \leqslant i \leqslant k$, and $r = 1$, $2$, $3$, we have
$$
\overline{L(k,i)}^{\sigma_r, +}\cong \overline{L(k,k-i)}^{\sigma_r,+}, \ \overline{L(k,i)}^{\sigma_r, -}\cong \overline{L(k,k-i)}^{\sigma_r,-}
$$
as $L(k,0)^{K}$-modules, and
\[  K_{\overline{L(k,i)}^{\sigma_r}} =
\begin{cases}
\langle \sigma_{r} \rangle,  & \mbox{if }  i \neq \frac{k}{2} ,  \\
K, & \mbox{if }   i = \frac{k}{2}, \ k\in 2\Z_{+}.
\end{cases}  \]
\end{lem}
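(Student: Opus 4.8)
The plan is to compute the full $K$-orbit of the $\sigma_r$-twisted module $\overline{L(k,i)}^{\sigma_r}$ and to read off both assertions from it. Throughout I fix $r$ and write $\{r,s,t\}=\{1,2,3\}$. First I would record the free input: by the basic fact that $g\in G_M$ for any $g$-twisted module $M$, we have $\la\sigma_r\ra\subseteq K_{\overline{L(k,i)}^{\sigma_r}}$, so all the content lies in the action of the remaining generators $\sigma_s,\sigma_t$. Since $K$ is abelian, $\sigma_u^{-1}\sigma_r\sigma_u=\sigma_r$, hence for $u\in\{s,t\}$ the module $\overline{L(k,i)}^{\sigma_r}\circ\sigma_u$ is again an irreducible $\sigma_r$-twisted $L(k,0)$-module; by the classification of such modules it is isomorphic to $\overline{L(k,j)}^{\sigma_r}$ for a unique $0\leqslant j\leqslant k$.

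Next I would pin down $j$ using two invariants. Because $\omega\in L(k,0)^K$ is fixed by $\sigma_u$, the Virasoro operator $L(0)$ acts identically on $\overline{L(k,i)}^{\sigma_r}\circ\sigma_u$ and on $\overline{L(k,i)}^{\sigma_r}$; matching conformal weights forces $\frac{j(j-k)}{4(k+2)}=\frac{i(i-k)}{4(k+2)}$, i.e.\ $j\in\{i,k-i\}$. To decide between the two candidates I would use the charge operator, namely the coefficient of $z^{-1}$ in $Y_{\sigma_r}(h^{(r)},z)$, which by \eqref{twist-h5} equals $h^{(r)}(0)+\frac{k}{2}$ and acts on the bottom vector $v^{r,i,i}$ by the scalar $\frac{k-2i}{2}$; since the scalars $\frac{k-2j}{2}$ are pairwise distinct as $j$ runs over $0,\dots,k$, this bottom charge determines the module. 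The decisive point is that $h^{(r)}$ is the $K$-eigenvector affording the character which is $+1$ on $\sigma_r$ and $-1$ on $\sigma_s,\sigma_t$, so that $\sigma_u(h^{(r)})=-h^{(r)}$ for $u\ne r$. Hence on $\overline{L(k,i)}^{\sigma_r}\circ\sigma_u$ the charge operator becomes the coefficient of $z^{-1}$ in $Y_{\sigma_r}(\sigma_u h^{(r)},z)=-Y_{\sigma_r}(h^{(r)},z)$, whose eigenvalue on the bottom is $-\frac{k-2i}{2}=\frac{k-2(k-i)}{2}$. Comparing with the list gives $j=k-i$, that is
\[
\overline{L(k,i)}^{\sigma_r}\circ\sigma_u\cong\overline{L(k,k-i)}^{\sigma_r},\qquad u\in\{s,t\}.
\]

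The stabilizer then follows at once: if $i\ne k-i$ the $K$-orbit is $\{\overline{L(k,i)}^{\sigma_r},\overline{L(k,k-i)}^{\sigma_r}\}$ and neither $\sigma_s$ nor $\sigma_t$ fixes the module, so $K_{\overline{L(k,i)}^{\sigma_r}}=\la\sigma_r\ra$; if $k=2i$ is even then $k-i=i$, so $\sigma_s,\sigma_t$ also fix the module and $K_{\overline{L(k,i)}^{\sigma_r}}=K$. For the isomorphism statement I would restrict the twisted-module isomorphism $\overline{L(k,i)}^{\sigma_r}\circ\sigma_u\cong\overline{L(k,k-i)}^{\sigma_r}$ to $L(k,0)^K$: since $\sigma_u$ fixes $L(k,0)^K$ pointwise, $Y_{M\circ\sigma_u}(v,z)=Y_M(\sigma_u v,z)=Y_M(v,z)$ for $v\in L(k,0)^K$, so $\overline{L(k,i)}^{\sigma_r}\circ\sigma_u$ and $\overline{L(k,i)}^{\sigma_r}$ coincide as $L(k,0)^K$-modules; therefore $\overline{L(k,i)}^{\sigma_r}\cong\overline{L(k,k-i)}^{\sigma_r}$ as $L(k,0)^K$-modules. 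This identification is grade-preserving (again $\omega$ is $K$-fixed), and the $+$ and $-$ summands are singled out by their $L(0)$-spectra lying in $\frac{i(i-k)}{4(k+2)}+\frac{k}{16}+\Z$ respectively in the same coset shifted by $\frac12$; hence the isomorphism carries $+$ to $+$ and $-$ to $-$, giving $\overline{L(k,i)}^{\sigma_r,\pm}\cong\overline{L(k,k-i)}^{\sigma_r,\pm}$.

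I expect the main obstacle to be precisely the determination $j=k-i$ rather than $j=i$: matching conformal weights only narrows the answer to $\{i,k-i\}$, and the decisive input is the sign-flip $\sigma_u(h^{(r)})=-h^{(r)}$ combined with the fact that the bottom charge $\frac{k-2j}{2}$ separates the two candidates. The one routine verification I would still supply is that the bottom level of $\overline{L(k,i)}^{\sigma_r}$ is exactly $\C\,v^{r,i,i}$, so that the charge operator is a genuine scalar there; this follows by minimizing the shifted grading $n+\tfrac14 c+\tfrac{k}{16}$ over states of $L(k,i)$ of old weight $n$ and $h^{(r)}(0)$-charge $c$, the minimum being attained uniquely at $v^{r,i,i}$.
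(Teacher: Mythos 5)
Your proof is correct, but it takes a genuinely different route from the paper's. The paper fixes $r=1$ and argues by contradiction: if $\overline{L(k,i)}^{\sigma_1}$ were $\sigma_2$-stable, then $v^{i,i}$ and $h(-1)v^{i,i}$ would have to lie in different summands of the decomposition into irreducible $L(k,0)^{K}$-modules (since $h(-1)\mathbbm{1}$ lies in $L(k,0)^{\la\sigma_1\ra}$ but is a $(-1)$-eigenvector of $\sigma_2$), whereas the computation $(h(-1)^2\mathbbm{1})_{-1}v^{i,i}=(k-2i)h(-1)v^{i,i}$, with $h(-1)^2\mathbbm{1}\in L(k,0)^{K}$, forces them into the same summand when $i\neq\frac{k}{2}$; the identification $\overline{L(k,i)}^{\sigma_1}\circ\sigma_2\cong\overline{L(k,k-i)}^{\sigma_1}$ is then read off from the conformal-weight lemma, and the case $i=\frac{k}{2}$ is handled separately by exhibiting explicit isomorphisms $\phi(\sigma_j)$. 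You instead compute the $K$-orbit directly: the sign flip $\sigma_u(h^{(r)})=-h^{(r)}$ for $u\neq r$ converts the zero-mode charge $\frac{k-2i}{2}$ on the one-dimensional bottom level into $\frac{k-2(k-i)}{2}$, which pins down $\overline{L(k,i)}^{\sigma_r}\circ\sigma_u\cong\overline{L(k,k-i)}^{\sigma_r}$ uniformly in $i$, after which both the stabilizer computation and the grade-preserving, $\pm$-respecting $L(k,0)^{K}$-isomorphisms fall out at once. The two arguments rest on the same numerical fact (the bottom charge $\frac{k-2i}{2}$, which vanishes exactly when $i=\frac{k}{2}$), but yours avoids the contradiction and the case split, and it makes the first assertion of the lemma explicit, where the paper obtains it somewhat implicitly from the $\circ\,\sigma_2$ identification. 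What the paper's route buys is the concrete maps $\phi(\sigma_j)$ at $i=\frac{k}{2}$, which are reused in Lemma \ref{twisted decomposition} to define the projective $K$-action and its $2$-cocycle; with your argument those maps would still need to be produced at that later stage (they exist by Schur's lemma, but the explicit form is what the subsequent proof manipulates). Your closing verification that the bottom level of $\overline{L(k,i)}^{\sigma_r}$ is exactly $\C v^{r,i,i}$ is indeed the one point that must be supplied for the charge invariant to be meaningful, and your minimization of $n+\tfrac{1}{4}c+\tfrac{k}{16}$ is the standard and correct way to do it.
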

\begin{proof}
We will prove the case for $r = 1$, the proof of the case for $r = 2, 3$ is similar. Note that $\overline{L(k,i)}^{\sigma_1} ( 0 \leqslant i \leqslant k )$ are all the irreducible $\sigma_1$-twisted $L(k,0)$-modules up to isomorphism.
Similar to \eqref{phi 0} and \eqref{phi 1}, we define $\phi(\sigma_0)$ maps $v^{i,i}$ to $v^{i,i}$ and $\phi(\sigma_1)$ maps $v^{i,i}$ to $(-1)^iv^{i,i}$ where we use the same notation $\phi$. Repeated application of \eqref{twist-h1}-\eqref{twist-f} along with the following relations
\[ e_n = e(n + \frac{1}{2}) , \quad f_n = f(n - \frac{1}{2}) , \quad n \in \frac{1}{2} + \mathbb{Z} , \quad h_n =
\begin{cases}
h(n),  & \mbox{if }  n \neq 0 ,  \\
h(0)+\frac{k}{2}, & \mbox{if } n = 0 ,
\end{cases}    \]
we can see that $\phi(\sigma_0)$ and $\phi(\sigma_1)$ are $\sigma_1$-twisted $L(k,0)$-module isomorphisms. Then we have the following $\sigma_{1}$-twisted $L(k,0)$-module isomorphisms
\[ \overline{L(k,i)}^{\sigma_1} \circ \sigma_0 \cong \overline{L(k,i)}^{\sigma_1}, \quad \overline{L(k,i)}^{\sigma_1} \circ \sigma_1 \cong \overline{L(k,i)}^{\sigma_1}, \]
\[ \overline{L(k,i)}^{\sigma_1} \circ \sigma_2  \cong (\overline{L(k,i)}^{\sigma_1} \circ \sigma_1) \circ \sigma_3 \cong \overline{L(k,i)}^{\sigma_1} \circ \sigma_3. \]
In order to prove ${K}_{\overline{L(k,i)}^{\sigma_1}} = \{ \sigma_0, \sigma_1 \}$ for $i \neq \frac{k}{2}$, we need to confirm that $\overline{L(k,i)}^{\sigma_1} ( i \neq \frac{k}{2} )$ are neither $\sigma_2$-stable nor $\sigma_3$-stable.
Otherwise, $\sigma_2$ belongs to ${K}_{\overline{L(k,i)}^{\sigma_1}}$.
Recall from Theorem \ref{Z_2 decomposition}, $\overline{L(k,i)}^{\sigma_1,+}$ is generated by the lowest weight vector $v^{i,i}$ as an irreducible $L(k,0)^{\langle \sigma_1 \rangle}$-module. Notice that $h(-1)\mathbbm{1} \in L(k,0)^{\langle \sigma_1 \rangle}$ and $h(-1)\mathbbm{1} \notin L(k,0)^{\langle \sigma_2 \rangle}$, then $v^{i,i}$ and $(h(-1)\mathbbm{1})_{-1}v^{i,i} = h(-1)v^{i,i}$ must belong to different direct summands of the decomposition of $\overline{L(k,i)}^{\sigma_1}$ into $L(k,0)^{K}$-modules. On the other hand, $h(-1)^2\mathbbm{1} \in L(k,0)^{K}$ and
\[ (h(-1)^2\mathbbm{1})_{-1}v^{i,i} = 2\sum_{m \geqslant 0}h_{-1-m}h_mv^{i,i} = 2h(-1)(h(0)+\frac{k}{2})v^{i,i} = (k-2i)h(-1)v^{i,i}. \]
As a result of $i \neq \frac{k}{2}$, we see that $h(-1)v^{i,i}$ belongs to the irreducible $L(k,0)^{K}$-module which is generated by $v^{i,i}$, yielding a contradiction. Together with Lemma \ref{twisted l.w.}, we deduce that
\begin{equation}
\overline{L(k,i)}^{\sigma_1} \circ \sigma_2  \cong \overline{L(k,i)}^{\sigma_1} \circ \sigma_3 \cong \overline{L(k,k-i)}^{\sigma_1},
\end{equation}
and
 $${K}_{\overline{L(k,i)}^{\sigma_1}} = \{ \sigma_0, \sigma_1\}$$ for $i \neq \frac{k}{2}$.

If $k \in 2\mathbb{Z}_{+}$, we define $\phi(\sigma_s)$ mapping $v^{\frac{k}{2},\frac{k}{2}}$ to $v^{\frac{k}{2},\frac{k}{2}}$ for $s = 0, 2,$ and $\phi(\sigma_t)$ mapping $v^{\frac{k}{2},\frac{k}{2}}$ to $(-1)^\frac{k}{2}v^{\frac{k}{2},\frac{k}{2}}$ for $t = 1, 3$ where we still use the same notation $\phi$.
It is easy to verify that $\phi(\sigma_j)$ for $j = 0$, $1$, $2$, $3$ are $\sigma_1$-twisted $L(k,0)$-module isomorphisms. Thus ${K}_{\overline{L(k,\frac{k}{2})}^{\sigma_1}} = K$.
\end{proof}

\begin{lem}   \label{lwv for twisted k/2}
If $k \in 2\mathbb{Z}_{+}$, for each $r = 1$, $2$, $3$, the vectors $v^{r,\frac{k}{2},\frac{k}{2}}$, $h^{(r)}(-1)v^{r,\frac{k}{2},\frac{k}{2}}$, $(e^{(r)} + f^{(r)})_{-\frac{1}{2}}v^{r,\frac{k}{2},\frac{k}{2}}$ and $(e^{(r)} - f^{(r)})_{-\frac{1}{2}}v^{r,\frac{k}{2},\frac{k}{2}}$ are four different lowest weight vectors in $\overline{L(k,\frac{k}{2})}^{\sigma_r}$ as $L(k,0)^{K}$-modules, respectively.
\end{lem}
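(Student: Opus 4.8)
The plan is to build on the structure already assembled in Lemma \ref{twisted G_M subgroup}, where it is shown that $K_{\overline{L(k,k/2)}^{\sigma_r}} = K$ and a projective representation $\phi$ of $K$ on $M := \overline{L(k,k/2)}^{\sigma_r}$ is constructed so that $v^{r,k/2,k/2}$ is a simultaneous eigenvector for every $\phi(\sigma_j)$, with $\phi(\sigma_0),\phi(\sigma_2)$ fixing it and $\phi(\sigma_1),\phi(\sigma_3)$ scaling it by $(-1)^{k/2}$. The first step is to show the twisted group algebra $\mathbb{C}^{\alpha_M}[K]$ is commutative. Since $M$ is an irreducible $\sigma_r$-twisted module, $v^{r,k/2,k/2}$ is a cyclic vector, so by Schur's lemma any two twisted-module endomorphisms implementing the same automorphism of $V$ and agreeing on $v^{r,k/2,k/2}$ must coincide. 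Both $\phi(\sigma_1)\phi(\sigma_2)$ and $\phi(\sigma_2)\phi(\sigma_1)$ implement $\sigma_1\sigma_2=\sigma_2\sigma_1=\sigma_3$, and evaluating on $v^{r,k/2,k/2}$ (where all $\phi(\sigma_j)$ act by scalars) both agree with $\phi(\sigma_3)$; hence $\phi(\sigma_1)\phi(\sigma_2)=\phi(\sigma_2)\phi(\sigma_1)=\phi(\sigma_3)$, so $\mathbb{C}^{\alpha_M}[K]\cong\mathbb{C}K$ has four one-dimensional irreducible modules. By Theorem \ref{DRX17 thm1}, $M$ is then a direct sum of four pairwise inequivalent irreducible $L(k,0)^{K}$-modules, one for each character of $K$.

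Next I would separate these four summands using the $L_0$-weight parity together with the $\phi(\sigma_s)$-eigenvalue for a fixed $s\neq r$. By Theorem \ref{Z_2 decomposition}, $M=M^0\oplus M^1$ with $M^0=\overline{L(k,k/2)}^{\sigma_r,+}$ carrying $\mathbb{Z}$-spaced weights above its lowest weight $\lambda_0$ and $M^1=\overline{L(k,k/2)}^{\sigma_r,-}$ carrying $(\tfrac12+\mathbb{Z})$-spaced weights. The vectors $v^{r,k/2,k/2}$ and $h^{(r)}(-1)v^{r,k/2,k/2}$ lie in $M^0$ at weights $\lambda_0$ and $\lambda_0+1$, while $(e^{(r)}\pm f^{(r)})_{-1/2}v^{r,k/2,k/2}$ lie in $M^1$ at weight $\lambda_0+\tfrac12$ (both nonzero and independent, since $e^{(r)}_{-1/2}v^{r,k/2,k/2}=v^{r,k/2,k/2-1}$ lives in the original lowest weight space while $f^{(r)}_{-1/2}v^{r,k/2,k/2}$ is a level-one descendant). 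Thus weight parity separates the first pair from the second. Using $\sigma_s h^{(r)}=-h^{(r)}$, the fact that $e^{(r)}+f^{(r)}$ and $e^{(r)}-f^{(r)}$ are $\sigma_s$-eigenvectors with opposite eigenvalues, and $\phi(\sigma_s)Y_{\sigma_r}(a,z)\phi(\sigma_s)^{-1}=Y_{\sigma_r}(\sigma_s a,z)$ with $\phi(\sigma_s)v^{r,k/2,k/2}=\pm v^{r,k/2,k/2}$, I would compute that the four vectors realize all four combinations of (weight parity, $\phi(\sigma_s)$-sign). Hence they lie in four distinct irreducible $L(k,0)^{K}$-summands.

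Finally I would verify that each vector sits at the bottom of its summand, so that it is genuinely a lowest weight vector. This is immediate for $v^{r,k/2,k/2}$ (the minimal weight of $M^0$) and for $(e^{(r)}\pm f^{(r)})_{-1/2}v^{r,k/2,k/2}$ (the minimal weight $\lambda_0+\tfrac12$ of $M^1$). The one delicate point is $h^{(r)}(-1)v^{r,k/2,k/2}$, which sits at weight $\lambda_0+1$, above the minimal weight of $M^0$; here I must rule out that its summand contains any lower-weight vector. That summand has $\phi(\sigma_s)$-eigenvalue opposite to $v^{r,k/2,k/2}$, so it contains nothing at weight $\lambda_0$ (where $M^0$ is one-dimensional, spanned by $v^{r,k/2,k/2}$), and $M^0$ has no weights strictly between $\lambda_0$ and $\lambda_0+1$; hence its minimal weight is exactly $\lambda_0+1$ and $h^{(r)}(-1)v^{r,k/2,k/2}$ is a lowest weight vector. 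The crucial input is the degeneration $(h^{(r)}(-1)^2\mathbbm{1})_{-1}v^{r,k/2,k/2}=(k-2\cdot\tfrac{k}{2})\,h^{(r)}(-1)v^{r,k/2,k/2}=0$ from the proof of Lemma \ref{twisted G_M subgroup}, which is precisely what makes $h^{(r)}(-1)v^{r,k/2,k/2}$ a new generator rather than an $L(k,0)^{K}$-descendant of $v^{r,k/2,k/2}$; its nonvanishing is standard for positive-level affine modules (otherwise that summand would vanish, contradicting the four-fold decomposition). I expect this last verification to be the main obstacle: pinning down that $h^{(r)}(-1)v^{r,k/2,k/2}$, and not some higher-level vector, is the lowest weight vector of its summand rests entirely on the weight/eigenvalue bookkeeping together with the vanishing $k-2i=0$ at $i=\tfrac{k}{2}$. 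The cases $r=2,3$ are identical after replacing $\{h,e,f\}$ and $\sigma_1$ by $\{h^{(r)},e^{(r)},f^{(r)}\}$ and $\sigma_r$.
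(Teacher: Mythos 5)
Your proposal is correct and takes essentially the same approach as the paper's proof: your separation of the four vectors via weight parity together with the $\phi(\sigma_s)$-eigenvalue is exactly the paper's use of the conjugation relation $\phi(\sigma)Y_{\sigma_r}(v,z)\phi(\sigma)^{-1}=Y_{\sigma_r}(\sigma v,z)$ applied to the generators $h^{(r)}(-1)\mathbbm{1}$, $(e^{(r)}\pm f^{(r)})(-1)\mathbbm{1}$ of the components $L(k,0)^{(j)}$, and your weight bookkeeping (one-dimensional lowest weight space of $\overline{L(k,\frac{k}{2})}^{\sigma_r,+}$, minimal half-integer weight in $\overline{L(k,\frac{k}{2})}^{\sigma_r,-}$) matches the paper's argument for placing each vector at the bottom of its summand. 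The only differences are harmless refinements rather than a different route: you prove commutativity of the twisted group algebra explicitly via Schur's lemma (the paper records this in Lemma \ref{twisted decomposition}), and you should justify $h^{(r)}(-1)v^{r,\frac{k}{2},\frac{k}{2}}\neq 0$ by the direct computation $h_{1}h_{-1}v^{r,\frac{k}{2},\frac{k}{2}}=2kv^{r,\frac{k}{2},\frac{k}{2}}$ rather than by the slightly circular appeal to the four-fold decomposition.
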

\begin{proof}
We will prove the case for $r = 1$, the proof of the case for $r = 2, 3$ is similar.
We know that $\mathbbm{1}$ (resp. $h(-1)\mathbbm{1}$, $e(-1)\mathbbm{1} + f(-1)\mathbbm{1}$, $e(-1)\mathbbm{1} - f(-1)\mathbbm{1}$) is the generator of $L(k,0)^{(0)}$ (resp. $L(k,0)^{(1)}$, $L(k,0)^{(2)}$, $L(k,0)^{(3)}$).
Since $\phi(\sigma)Y_{\sigma_1}(v, z){\phi(\sigma)}^{-1} = Y_{\sigma_1}(\sigma (v), z)$ for any $v \in L(k,0)$, $\sigma \in K$, we see that
$v^{\frac{k}{2},\frac{k}{2}}$,
$h_{-1}v^{\frac{k}{2},\frac{k}{2}} = h(-1)v^{\frac{k}{2},\frac{k}{2}}$,
$(e + f)_{-\frac{1}{2}}v^{\frac{k}{2},\frac{k}{2}}$ and
$(e - f)_{-\frac{1}{2}}v^{\frac{k}{2},\frac{k}{2}}$
must belong to different direct summands of the decomposition of $\overline{L(k,\frac{k}{2})}^{\sigma_1}$ into $L(k,0)^{K}$-modules.
Because the lowest weight space of $\overline{L(k,\frac{k}{2})}^{\sigma_1,+}$ is a one dimensional vector space spanned by $v^{\frac{k}{2},\frac{k}{2}}$, then $h(-1)v^{\frac{k}{2},\frac{k}{2}}$ is another lowest weight vector in $\overline{L(k,\frac{k}{2})}^{\sigma_1,+}$ as a $L(k,0)^{K}$-module besides $v^{\frac{k}{2},\frac{k}{2}}$. Observe that the top level of $\overline{L(k,\frac{k}{2})}^{\sigma_1,-}$ is a two dimensional vector space spanned by $e_{-\frac{1}{2}}v^{\frac{k}{2},\frac{k}{2}} = v^{\frac{k}{2},\frac{k}{2}-1}$ and $f_{-\frac{1}{2}}v^{\frac{k}{2},\frac{k}{2}}$, and $\{(e + f)_{-\frac{1}{2}}v^{\frac{k}{2},\frac{k}{2}}$, $(e - f)_{-\frac{1}{2}}v^{\frac{k}{2},\frac{k}{2}}\}$ form a basis of the top level of $\overline{L(k,\frac{k}{2})}^{\sigma_1,-}$.
Moreover, notice that $h(-1)\mathbbm{1} \in K^{\langle \sigma_{1} \rangle}\setminus L(k,0)^{K}$ and
\[ (h(-1)\mathbbm{1})_{0}(v^{\frac{k}{2},\frac{k}{2}-1} \pm f_{-\frac{1}{2}}v^{\frac{k}{2},\frac{k}{2}}) = 2(v^{\frac{k}{2},\frac{k}{2}-1} \mp f_{-\frac{1}{2}}v^{\frac{k}{2},\frac{k}{2}}).   \]
This implies that $(e + f)_{-\frac{1}{2}}v^{\frac{k}{2},\frac{k}{2}}$ and $(e - f)_{-\frac{1}{2}}v^{\frac{k}{2},\frac{k}{2}}$ must belong to different direct summands of the decomposition of $\overline{L(k,\frac{k}{2})}^{\sigma_1,-}$ into $L(k,0)^{K}$-modules. The lemma is proved.

\end{proof}

\begin{lem}  \label{twisted decomposition}
For $0 \leqslant i <  \frac{k}{2}$, we have
\begin{equation}  \label{twisted V^G dep.1}
\overline{L(k,i)}^{\sigma_r} = \overline{L(k,i)}^{\sigma_r,+} \oplus \overline{L(k,i)}^{\sigma_r,-}
\end{equation}
for each $r = 1$, $2$, $3$, where $\overline{L(k,i)}^{\sigma_r,+} ($resp. $\overline{L(k,i)}^{\sigma_r,-} )$ is the irreducible $L(k,0)^{K}$-module generated by the lowest weight vector $v^{r,i,i} ($resp. $f^{(r)}_{-\frac{1}{2}}v^{r,i,i} )$ with the lowest weight $\frac{i(i-k)}{4(k+2)} + \frac{k}{16} ($resp. $\frac{i(i-k)}{4(k+2)} + \frac{k}{16} + \frac{1}{2} )$.

\vskip 0.2cm
If $k \in 2\mathbb{Z}_{+}$, we have
\begin{equation}   \label{twisted V^G dep.2}
\overline{L(k,\frac{k}{2})}^{\sigma_r}=\bigoplus_{j=0}^{j=3}\overline{L(k,\frac{k}{2})}^{\sigma_r,(j)}
\end{equation}
for each $r = 1$, $2$, $3$, where $\overline{L(k,\frac{k}{2})}^{\sigma_r,(0)} ($resp. $\overline{L(k,\frac{k}{2})}^{\sigma_r,(1)}$, $\overline{L(k,\frac{k}{2})}^{\sigma_1,(2)}$, $\overline{L(k,\frac{k}{2})}^{\sigma_r,(3)} )$ is the irreducible $L(k,0)^{K}$-module generated by the lowest weight vector $v^{r,\frac{k}{2},\frac{k}{2}} ($resp. $h^{(r)}(-1)v^{r,\frac{k}{2},\frac{k}{2}}$, $(e^{(r)} + f^{(r)})_{-\frac{1}{2}}v^{r,\frac{k}{2},\frac{k}{2}}$, $(e^{(r)} - f^{(r)})_{-\frac{1}{2}}v^{r,\frac{k}{2},\frac{k}{2}} )$ with the lowest weight $\frac{k}{8(k+2)} ($resp. $\frac{k}{8(k+2)}+1$, $\frac{k}{8(k+2)} + \frac{1}{2}$, $\frac{k}{8(k+2)} + \frac{1}{2} )$.
\end{lem}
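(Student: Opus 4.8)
The plan is to feed each twisted module $\overline{L(k,i)}^{\sigma_r}$ into the general decomposition machinery of Theorem~\ref{DRX17 thm1}, using the stabilizer groups already computed in Lemma~\ref{twisted G_M subgroup}. As in the earlier lemmas I will carry out $r=1$ in detail, the cases $r=2,3$ being identical after replacing $\{h,e,f\}$ by $\{h^{(r)},e^{(r)},f^{(r)}\}$ and using the twisting formulas \eqref{twist-h5}--\eqref{twist-f}. The split into $0\leqslant i<\frac{k}{2}$ and $i=\frac{k}{2}$ is exactly the dichotomy $K_{\overline{L(k,i)}^{\sigma_r}}=\langle\sigma_r\rangle$ versus $K_{\overline{L(k,i)}^{\sigma_r}}=K$ from Lemma~\ref{twisted G_M subgroup}, and these two possibilities dictate the shape of the relevant twisted group algebra and hence the number of irreducible summands.

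For $0\leqslant i<\frac{k}{2}$ the stabilizer is $G_M=\langle\sigma_r\rangle\cong\Z_2$. Since every $2$-cocycle on $\Z_2$ is a coboundary, $\C^{\alpha_M}[\langle\sigma_r\rangle]\cong\C[\Z_2]$ has exactly two one-dimensional irreducible characters $\pm$. By the DRX classification (Theorems~\ref{DRX17 thm1}, \ref{DRX17 thm2} and \ref{DRX17 thm3}) the two characters produce precisely two irreducible $L(k,0)^{K}$-modules $M_{+},M_{-}$ with $\overline{L(k,i)}^{\sigma_r}=M_{+}\oplus M_{-}$, and comparison with \eqref{Z_2 decomposition eq2} identifies them with the $\Z_2$-orbifold modules $\overline{L(k,i)}^{\sigma_r,+}$ and $\overline{L(k,i)}^{\sigma_r,-}$, generated by $v^{r,i,i}$ and $f^{(r)}_{-\frac{1}{2}}v^{r,i,i}$ respectively. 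The lowest weight of $v^{r,i,i}$ is $\frac{i(i-k)}{4(k+2)}+\frac{k}{16}$ by Lemma~\ref{twisted l.w.}, and since \eqref{twist-f} makes $f^{(r)}_{-\frac{1}{2}}$ raise the conformal weight by $\frac{1}{2}$, the second generator has lowest weight $\frac{i(i-k)}{4(k+2)}+\frac{k}{16}+\frac{1}{2}$, as claimed.

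For $i=\frac{k}{2}$ (so $k\in 2\Z_+$) Lemma~\ref{twisted G_M subgroup} gives $G_M=K$, whence $V^{G_M}=L(k,0)^{K}$ and Theorem~\ref{DRX17 thm1}(2) yields irreducible $L(k,0)^{K}$-modules directly; the only point to settle is the structure of $\C^{\alpha_M}[K]$. I avoid computing the cocycle explicitly: Lemma~\ref{lwv for twisted k/2} already exhibits the four distinct lowest weight vectors $v^{r,\frac{k}{2},\frac{k}{2}}$, $h^{(r)}(-1)v^{r,\frac{k}{2},\frac{k}{2}}$, $(e^{(r)}+f^{(r)})_{-\frac{1}{2}}v^{r,\frac{k}{2},\frac{k}{2}}$ and $(e^{(r)}-f^{(r)})_{-\frac{1}{2}}v^{r,\frac{k}{2},\frac{k}{2}}$ lying in four different $L(k,0)^{K}$-summands. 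Thus $\overline{L(k,\frac{k}{2})}^{\sigma_r}$ contains at least four inequivalent irreducible $L(k,0)^{K}$-modules, so by Theorem~\ref{DRX17 thm1}(3) the algebra $\C^{\alpha_M}[K]$ has at least four inequivalent irreducible modules; since $\dim\C^{\alpha_M}[K]=4$ and $\sum_{\lambda}(\dim W_\lambda)^2=4$, every $W_\lambda$ is one-dimensional and there are exactly four summands. This gives $\overline{L(k,\frac{k}{2})}^{\sigma_r}=\bigoplus_{j=0}^{3}\overline{L(k,\frac{k}{2})}^{\sigma_r,(j)}$ generated by the four vectors above. Lemma~\ref{twisted l.w.} gives base weight $\frac{k}{8(k+2)}$, and the weight bookkeeping from \eqref{twist-h5}--\eqref{twist-f} ($h^{(r)}(-1)$ shifts by $1$ and $(e^{(r)}\pm f^{(r)})_{-\frac{1}{2}}$ by $\frac{1}{2}$) yields the stated weights $\frac{k}{8(k+2)}$, $\frac{k}{8(k+2)}+1$, $\frac{k}{8(k+2)}+\frac{1}{2}$, $\frac{k}{8(k+2)}+\frac{1}{2}$.

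The main obstacle is the case $0\leqslant i<\frac{k}{2}$: there $G_M=\langle\sigma_r\rangle$ is a \emph{proper} subgroup of $K$, so the pieces $\overline{L(k,i)}^{\sigma_r,\pm}$ are only manifestly irreducible over the larger algebra $L(k,0)^{\langle\sigma_r\rangle}$, and upgrading to irreducibility over the smaller algebra $L(k,0)^{K}$ is exactly what must be justified. I expect to settle this using the orbit description $\overline{L(k,i)}^{\sigma_r}\circ\sigma_2\cong\overline{L(k,k-i)}^{\sigma_r}$ of Lemma~\ref{twisted G_M subgroup} (which shows the $K$-orbit has size two for $i\neq\frac{k}{2}$), combined with the completeness statement of Theorem~\ref{DRX17 thm3}: the two characters of $\C^{\alpha_M}[\langle\sigma_r\rangle]$ must account for precisely two of the irreducible $L(k,0)^{K}$-modules attached to this orbit, forcing $M_{\pm}$ to remain irreducible over $L(k,0)^{K}$. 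Once this is in hand, the conformal-weight computations and the identification of generating vectors are routine consequences of Lemma~\ref{twisted l.w.} and the twisting formulas \eqref{twist-h5}--\eqref{twist-f}.
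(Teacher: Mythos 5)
Your proposal runs along essentially the same lines as the paper's proof: both cases are fed into the Dong--Ren--Xu machinery using the stabilizers computed in Lemma \ref{twisted G_M subgroup}, the case $0\leqslant i<\frac{k}{2}$ is identified with the $\mathbb{Z}_2$-orbifold decomposition \eqref{Z_2 decomposition eq2} of Theorem \ref{Z_2 decomposition}, and the lowest weights follow from Lemma \ref{twisted l.w.} together with \eqref{twist-h5}--\eqref{twist-f}. The one place where you genuinely deviate is $i=\frac{k}{2}$: the paper does not count summands as you do, but instead uses the explicit projective representation $\phi(\sigma_j)$ constructed in Lemma \ref{twisted G_M subgroup} to see that $\mathbb{C}^{\alpha_M}[K]$ is commutative, hence has four linear characters, and then names the four summands by their lowest weight vectors. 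Your alternative (Lemma \ref{lwv for twisted k/2} plus $\sum_{\lambda}(\dim W_{\lambda})^2=4$) is workable, but note that ``lying in four different summands'' does not by itself yield four \emph{inequivalent} irreducibles; you must also use the lowest weights $w$, $w+1$, $w+\frac{1}{2}$, $w+\frac{1}{2}$ to exclude the other solution of the dimension equation, namely a single two-dimensional $W_{\lambda}$ with $M\cong M_{\lambda}\oplus M_{\lambda}$, which would force all lowest weight vectors to have the same weight.

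The step you single out as ``the main obstacle'' is, however, not an obstacle in the paper's framework, and your proposed patch for it does not work as stated. In the results of \cite{DRX17} quoted as Theorems \ref{DRX17 thm2} and \ref{DRX17 thm3}, the multiplicity spaces $M_{\lambda}$ are already asserted to be irreducible $V^{G}$-modules, not merely $V^{G_M}$-modules; this stronger statement is the actual content of the DRX17 classification, and it is exactly what the paper invokes when it concludes from Theorem \ref{DRX17 thm1} that $\overline{L(k,i)}^{\sigma_1,\pm}$ are inequivalent irreducible $L(k,0)^{K}$-modules even though $K_{\overline{L(k,i)}^{\sigma_1}}=\langle\sigma_1\rangle$ is proper. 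Your suggested repair --- that by Theorem \ref{DRX17 thm3} ``the two characters must account for precisely two of the irreducible $L(k,0)^{K}$-modules attached to this orbit'' --- is circular: the completeness statement of Theorem \ref{DRX17 thm3} presupposes that each $M_{\lambda}$ is an irreducible $V^{G}$-module, and without that fact nothing bounds the number of irreducible $V^{K}$-constituents sitting inside the orbit, so no counting contradiction arises. The correct move is simply to cite the DRX17 theorem in its full strength (irreducibility of the multiplicity spaces over $V^{G}$), which is what the paper does; with that substitution your argument coincides with the paper's proof.
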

\begin{proof}
For $r=1$ and $i \neq \frac{k}{2}$, $K_{\overline{L(k,i)}^{\sigma_1}} = \{ \sigma_0, \sigma_1\}$  from Lemma \ref{twisted G_M subgroup}. Then the twisted group algebra $\mathbb{C}^{\alpha_{\overline{L(k,i)}^{\sigma_1}}}[K_{\overline{L(k,i)}^{\sigma_1}}]$ is a commutative semisimple associative algebra which has only two irreducible modules of dimension one. The decomposition of $\overline{L(k,i)}^{\sigma_1}$ into irreducible $L(k,0)^{K}$-modules is exactly \eqref{Z_2 decomposition eq2}.
And $\overline{L(k,i)}^{\sigma_1,+}$, $\overline{L(k,i)}^{\sigma_1,-}$ are inequivalent irreducible $L(k,0)^{K}$-modules due to Theorem \ref{DRX17 thm1}. Thus \eqref{twisted V^G dep.1} holds for $r=1$ and the lowest weight vectors with their lowest weights of these irreducible $L(K,0)^{K}$-modules are listed in Theorem \ref{Z_2 decomposition}.

If $k \in 2\mathbb{Z}_{+}$, $K_{\overline{L(k,\frac{k}{2})}^{\sigma_1}} = K$ from Lemma \ref{twisted G_M subgroup}. The $K$-orbit $\overline{L(k,\frac{k}{2})}^{\sigma_1} \circ K$ of $\overline{L(k,\frac{k}{2})}^{\sigma_1}$ only contains itself, and the map $\sigma_r \mapsto \phi(\sigma_r) ( r = 0, 1, 2, 3 )$ defined in Lemma \ref{twisted G_M subgroup} gives a projective representation of $K$ on $\overline{L(k,\frac{k}{2})}^{\sigma_1}$.
Let $\alpha_{\overline{L(k,\frac{k}{2})}^{\sigma_1}}$ be the corresponding $2$-cocycle in $C^2(K, \mathbb{C}^*)$.
Then $\overline{L(k,\frac{k}{2})}^{\sigma_1}$ is a module for the twisted group algebra $\mathbb{C}^{\alpha_{\overline{L(k,\frac{k}{2})}^{\sigma_1}}}[K]$. And $\mathbb{C}^{\alpha_{\overline{L(k,\frac{k}{2})}^{\sigma_1}}}[K]$ is a commutative semisimple associative algebra which has four irreducible modules of dimension one.
Let $\overline{L(k,\frac{k}{2})}^{\sigma_1,(0)}$ (resp. $\overline{L(k,\frac{k}{2})}^{\sigma_1,(1)}$, $\overline{L(k,\frac{k}{2})}^{\sigma_1,(2)}$, $\overline{L(k,\frac{k}{2})}^{\sigma_1,(3)}$ ) be the irreducible $L(k,0)^{K}$-module generated by the lowest weight vector $v^{\frac{k}{2},\frac{k}{2}}$ (resp. $h(-1)v^{\frac{k}{2},\frac{k}{2}}$, $(e + f)_{-\frac{1}{2}}v^{\frac{k}{2},\frac{k}{2}}$, $(e - f)_{-\frac{1}{2}}v^{\frac{k}{2},\frac{k}{2}}$ ) with the lowest weight $\frac{k}{8(k+2)}$ (resp. $\frac{k}{8(k+2)}+1$, $\frac{k}{8(k+2)} + \frac{1}{2}$, $\frac{k}{8(k+2)} + \frac{1}{2}$ ).
Then $\overline{L(k,\frac{k}{2})}^{\sigma_1,+} = \overline{L(k,\frac{k}{2})}^{\sigma_1,(0)} \oplus \overline{L(k,\frac{k}{2})}^{\sigma_1,(1)}$ and $\overline{L(k,\frac{k}{2})}^{\sigma_1,-} = \overline{L(k,\frac{k}{2})}^{\sigma_1,(2)} \oplus \overline{L(k,\frac{k}{2})}^{\sigma_1,(3)}$. As a result, \eqref{twisted V^G dep.2} holds for $r=1$.

In almost exactly the same way, the proof for the case $r = 2$ and $r = 3$ are similar to the case $r = 1$. Therefore, we obtain our result as desired.
\end{proof}

To sum up, if $k \in 2\mathbb{Z}_{+}-1$, there are $k+1$ irreducible $L(k,0)^{K}$-modules coming from $\sigma_r$-twisted $L(k,0)$-modules $\overline{L(k,i)}^{\sigma_r}, 0 \leqslant i\leqslant k$ for each $r = 1$, $2$ or $3$ up to isomorphism.
If $k \in 2\mathbb{Z}_{+}$, there are $k+4$ irreducible $L(k,0)^{K}$-modules coming from $\sigma_r$-twisted $L(k,0)$-modules $\overline{L(k,i)}^{\sigma_r}, 0 \leqslant i\leqslant k$ for each $r = 1$, $2$ or $3$ up to isomorphism.

\vskip0.3cm
From the above discussion and Theorem \ref{DRX17 thm3}, we obtain one of the main results of this paper.

\begin{thm}\label{main1}
(1) If $k \in 2\mathbb{Z}_{+}-1$, there are $\frac{11(k+1)}{2}$ inequivalent irreducible $L(k,0)^{K}$-modules as follows:
$$
L(k, i)^{+}, \ L(k,j)^{(l)}, \ 1\leqslant i\leqslant k, \ i\in 2\Z+1, \ 0\leqslant j\leqslant k, \ j\in 2\Z, \ 0\leqslant l\leqslant 3,
$$
$$
\overline{L(k,i)}^{\sigma_r,+}, \ \overline{L(k,i)}^{\sigma_r,-}, \ 0\leqslant i\leqslant \frac{k-1}{2}, \ r=1,2,3.
$$
(2) If $k \in 2\mathbb{Z}_{+}$, there are $\frac{11k+32}{2}$ inequivalent irreducible $L(k,0)^{K}$-modules as follows:
$$
L(k, i)^{+}, \ L(k,j)^{(l)}, \ 1\leqslant i\leqslant k, \ i\in 2\Z+1, \ 0\leqslant j\leqslant k, \ j\in 2\Z, \ 0\leqslant l\leqslant 3,
$$
$$
\overline{L(k,i)}^{\sigma_r,+}, \ \overline{L(k,i)}^{\sigma_r,-}, \ \overline{L(k,\frac{k}{2})}^{{\sigma_r}, (l)}, \ 0\leqslant i\leqslant \frac{k}{2}-1, \ r=1,2,3, \ 0\leqslant l\leqslant 3.
$$
\end{thm}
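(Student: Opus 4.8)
The plan is to obtain Theorem~\ref{main1} by assembling the orbit-by-orbit analysis of the preceding lemmas through the general classification machinery of \cite{DRX17} recorded in Theorems~\ref{DRX17 thm1}--\ref{DRX17 thm3}. Since $L(k,0)$ is regular, selfdual and of CFT type while $K=\mathbb{Z}_2\times\mathbb{Z}_2$ is abelian, hence solvable, Theorem~\ref{DRX17 thm3} guarantees that every irreducible $L(k,0)^K$-module occurs as a summand $M_\lambda$ of some irreducible $\sigma_r$-twisted $L(k,0)$-module $M$ (with $r\in\{0,1,2,3\}$, writing $\sigma_0=\mathrm{id}$) for some $\lambda\in\Lambda_{K_M,\alpha_M}$. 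It therefore suffices to run over a complete set of irreducible $\sigma_r$-twisted modules for each $r$, group them into $K$-orbits, decompose one representative of each orbit, and then count.

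First I would dispose of the untwisted case $r=0$. A complete list of ordinary irreducible modules is $L(k,i)$, $0\leqslant i\leqslant k$; by Lemma~\ref{irr K_M subgroup} each is $K$-stable, so each forms a one-element orbit with full stabilizer $K$, and Lemma~\ref{irr decomposition} (with Theorem~\ref{V decomposition} covering $i=0$) supplies the decomposition: a single module $L(k,i)^+$ for odd $i$ and four modules $L(k,i)^{(l)}$ for even $i$. Counting the odd and even values of $i$ in $\{0,\dots,k\}$ yields $\frac{5(k+1)}{2}$ such modules when $k$ is odd and $\frac{5k+8}{2}$ when $k$ is even.

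For $r=1,2,3$ the earlier lemma providing a complete list of irreducible $\sigma_r$-twisted modules gives them as $\overline{L(k,i)}^{\sigma_r}$, $0\leqslant i\leqslant k$, and Lemma~\ref{twisted G_M subgroup} identifies both the $K$-orbits $\{\overline{L(k,i)}^{\sigma_r},\overline{L(k,k-i)}^{\sigma_r}\}$ and their stabilizers ($\langle\sigma_r\rangle$ when $i\neq k/2$, all of $K$ when $i=k/2$ with $k$ even). Taking $0\leqslant i\leqslant\lfloor k/2\rfloor$ as orbit representatives and applying Lemma~\ref{twisted decomposition} (whose $i=k/2$ part rests on Lemma~\ref{lwv for twisted k/2}) produces two modules $\overline{L(k,i)}^{\sigma_r,\pm}$ from each orbit with $i<k/2$ and four modules $\overline{L(k,k/2)}^{\sigma_r,(l)}$ from the exceptional orbit, hence $k+1$ modules per $r$ when $k$ is odd and $k+4$ per $r$ when $k$ is even.

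Finally I would assemble the totals and check irredundancy. Within the decomposition of a single $M$ the summands $M_\lambda$ are irreducible and pairwise inequivalent by Theorem~\ref{DRX17 thm1}, so each orbit contributes exactly the stated number; across distinct orbits---between the untwisted family and any twisted family, and between the $\sigma_r$- and $\sigma_s$-families for $r\neq s$---Theorem~\ref{DRX17 thm2} forces inequivalence, the point being that abelianness of $K$ keeps a $\sigma_r$-twisted module inside an orbit of $\sigma_r$-twisted modules. Adding $\frac{5(k+1)}{2}$ to $3(k+1)$ gives $\frac{11(k+1)}{2}$ for $k$ odd, and adding $\frac{5k+8}{2}$ to $3(k+4)$ gives $\frac{11k+32}{2}$ for $k$ even, with the stated lists being exactly the summands of the chosen orbit representatives. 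The one genuinely delicate point---where I would take the most care---is the bookkeeping of orbits and stabilizers so that each orbit is counted once via a single representative; this is precisely what Lemma~\ref{twisted G_M subgroup} secures, while Theorem~\ref{DRX17 thm2} rules out any hidden coincidence among the three twisted families and the untwisted one.
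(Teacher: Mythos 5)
Your proposal is correct and takes essentially the same route as the paper's proof: completeness via Theorem \ref{DRX17 thm3}, the orbit-by-orbit decompositions supplied by Theorem \ref{V decomposition}, Lemmas \ref{irr K_M subgroup}, \ref{irr decomposition}, \ref{twisted G_M subgroup} and \ref{twisted decomposition}, and the identical count $\frac{5(k+1)}{2}+3(k+1)=\frac{11(k+1)}{2}$, resp.\ $\frac{5k+8}{2}+3(k+4)=\frac{11k+32}{2}$. The only difference is presentational: you spell out the cross-family inequivalence argument (Theorem \ref{DRX17 thm2} together with the observation that abelianness of $K$ keeps each $K$-orbit within a single twisting type), which the paper leaves implicit in the summary paragraphs preceding the theorem.
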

\begin{proof}
From Theorem \ref{DRX17 thm3}, we know that any irreducible $L(k,0)^{K}$-module occurs in an irreducible $\sigma_r$-twisted $L(k,0)$-module for some $r \in \{0, 1, 2, 3\}$. In more detail, $L(k,i)^+ ( 0 \leqslant i \leqslant k, i \in 2\mathbb{Z}+1 )$, $L(k,i)^{(j)} ( 0 \leqslant i \leqslant k, i \in 2\mathbb{Z}, j \in \{0, 1, 2, 3 \} )$, $\overline{L(k,i)}^{\sigma_r, \pm} ( 0 \leqslant i < \frac{k}{2}, r \in \{1, 2, 3\} )$ and $\overline{L(k,\frac{k}{2})}^{\sigma_r,(j)} ( r \in \{1, 2, 3\}, j \in \{0, 1, 2, 3\}, k \in 2\mathbb{Z} )$ give a complete list of inequivalent irreducible $L(k,0)^{K}$-modules. The lowest weight vectors with their lowest weights of these irreducible $L(k,0)^{K}$-modules are listed in Theorem \ref{V decomposition}, Lemma \ref{irr decomposition} and Lemma \ref{twisted decomposition}.
\end{proof}

\begin{rmk}
For $k=1$, the orbifold vertex operator algebra $L(1,0)^{K}$ can be realized as the fixed point subalgebra $V_{\Z\be}^+$ of the lattice vertex operator algebra $V_{\Z\be}$ associated to the positive definite even lattice $\Z\be$ with $(\be, \be) = 8$ under an automorphism of $V_{\Z\be}$ lifting the $-1$ isometry of ${\Z\be}$ \cite{DG98}. Moreover, the representations $V_{\Z\be}^+$ have been studied in \cite{DJ13} and \cite{DN99}. It turns out there are $11$ inequivalent irreducible $V_{\Z\be}^+$-modules which have been listed  explicitly in \cite{DJ13}.
\end{rmk}

\begin{rmk}
Notice that for $k=2$, $L(2,0)=V_{\Z\gamma}\otimes L_{Vir}(\frac{1}{2},0)\oplus V_{\Z\gamma + \frac{1}{2}\gamma}\otimes L_{Vir}(\frac{1}{2},\frac{1}{2})$ and  $L(2,0)^{\langle \sigma_1 \rangle}=V_{\Z\gamma}\otimes L_{Vir}(\frac{1}{2},0)$, where  $(\gm, \gm) = 4$, and $L_{Vir}(\frac{1}{2}, 0)$ is the simple vertex operator algebra associated to the Virasoro algebra with central charge $\frac{1}{2}$. Then it is easy to see that $L(2,0)^{K}$ is isomorphic to $V_{\Z\gm}^+ \otimes L_{Vir}(\frac{1}{2}, 0)$  Using the fact that $V_{\Z\gm}^+$ is isomorphic to $L_{Vir}(\frac{1}{2},0)^{\otimes 2}$ \cite{DGH98}, we have $L_{\widehat{\mathfrak{sl}_2}}(2,0)^{K} \cong L_{Vir}(\frac{1}{2},0)^{\otimes 3}$. Then there are exactly $27$ inequivalent irreducible $L_{Vir}(\frac{1}{2},0)^{\otimes 3}$-modules.

%All the irreducible $L_{Vir}(\frac{1}{2},0)^{\otimes 3}$-modules are $L_{Vir}(\frac{1}{2}, h_1) \otimes L_{Vir}(\frac{1}{2}, h_2) \otimes L_{Vir}(\frac{1}{2}, h_3)$ where $h_1, h_2, h_3 \in \{0, \frac{1}{2}, \frac{1}{16}\}$.
\end{rmk}

Recall from Theorem \ref{jl1} that for $m\in 2\Z_{\geqslant 2}+1$, $C_{{L_{\widehat{\mathfrak{so}_m}}(1,0)}^{\otimes 3}}({L_{\widehat{\mathfrak{so}_m}}(3,0)})\cong L(2m, 0)^{K}$, and $C_{{L_{\widehat{\mathfrak{so}_m}}(1,0)}^{\otimes 3}}({L_{\widehat{\mathfrak{so}_m}}(3,0)})\cong (L(2m, 0)\oplus L(2m, 2m))^{K}$ for $m\in 2\Z_{\geqslant 2}$. We are now in a position to give the other main result of this paper.
\begin{thm} (1) For $m\in 2\Z_{\geqslant 2}+1$, $C_{{L_{\widehat{\mathfrak{so}_m}}(1,0)}^{\otimes 3}}({L_{\widehat{\mathfrak{so}_m}}(3,0)})$ has $11m+16$ inequivalent irreducible modules as follows:
$$
L(2m, i)^{+}, \ L(2m,j)^{(l)}, \ 1\leqslant i\leqslant 2m, \ i\in 2\Z+1, \ 0\leqslant j\leqslant 2m, \ j\in 2\Z, \ 0\leqslant l\leqslant 3,
$$
$$
\overline{L(2m,i)}^{\sigma_r,+}, \ \overline{L(2m,i)}^{\sigma_r,-}, \ \overline{L(2m, m)}^{{\sigma_r}, (l)}, \ 0\leqslant i\leqslant m-1, \ r=1,2,3, \ 0\leqslant l\leqslant 3.
$$

(2) For $m\in 2\Z_{\geqslant 2}$, $C_{{L_{\widehat{\mathfrak{so}_m}}(1,0)}^{\otimes 3}}({L_{\widehat{\mathfrak{so}_m}}(3,0)})\cong L(2m,0)^K\oplus L(2m,2m)^K$ has $8m+32$ inequivalent irreducible modules as follows:
$$
L(2m, i)^{(j)}\oplus L(2m,2m-i)^{(j)},$$ \
where $0\leqslant i\leqslant m-2, \ i\in 2\Z$, $0\leqslant j\leqslant 3$  with the ``split'' type ones:
$$
(L(2m,m)^{(j)})^{\pm}, \ (\overline{L(2m,i)}^{\sigma_r,+})^{\pm}, \ (\overline{L(2m,i)}^{\sigma_r,-})^{\pm}, \ (\overline{L(2m, m)}^{{\sigma_r}, (j)})^{\pm},$$
 where $0\leqslant i\leqslant m-2, i\in 2\Z$, \ $r=1,2,3, \ 0\leqslant j\leqslant 3.$
\end{thm}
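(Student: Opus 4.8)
The plan is to deduce both parts from the classification of $L(2m,0)^{K}$-modules in Theorem \ref{main1}, transported through the realizations of Theorem \ref{jl1}. For part (1), when $m$ is odd Theorem \ref{jl1} gives $C_{L_{\wo}(1,0)^{\otimes 3}}(L_{\wo}(3,0))\cong L(2m,0)^{K}$. Since $m$ is odd, $k:=2m$ is even, so I would simply invoke Theorem \ref{main1}(2) with $k=2m$ (hence $\tfrac{k}{2}=m$): its list and its count $\tfrac{11\cdot 2m+32}{2}=11m+16$ are exactly those asserted, and nothing more is needed beyond carrying the classification across the vertex operator algebra isomorphism.

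For part (2), when $m$ is even Theorem \ref{jl1} gives $C_{L_{\wo}(1,0)^{\otimes 3}}(L_{\wo}(3,0))\cong(L(2m,0)\oplus L(2m,2m))^{K}=:\widetilde{V}^{K}$. First I would pin down the structure. The module $L(2m,2m)$ is the nontrivial order-two simple current of $L(2m,0)$, and its conformal weight $\tfrac{2m(2m+2)}{4(2m+2)}=\tfrac{m}{2}$ is a positive integer because $m$ is even; moreover $K$ preserves the subalgebra $L(2m,0)$, since the latter is generated by its weight-one space, to which $L(2m,2m)$ contributes nothing (as $\tfrac{m}{2}\geqslant 2$). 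Hence $K$ also preserves $L(2m,2m)$, and $\widetilde{V}^{K}=L(2m,0)^{K}\oplus L(2m,2m)^{(0)}$, where $L(2m,2m)^{(0)}=L(2m,2m)^{K}$ is the $K$-invariant summand produced by Lemma \ref{irr decomposition}(3) with $i=2m$. I would then note that the eight summands $L(2m,0)^{(j)}$ and $L(2m,2m)^{(j)}$ ($0\leqslant j\leqslant 3$) of $\widetilde{V}$ form a group of simple currents of $L(2m,0)^{K}$ under $\boxtimes_{L(2m,0)^{K}}$ (isomorphic to $\Z_{2}^{3}$, since $\widetilde V$ is their abelian extension); in particular $L(2m,2m)^{(0)}$ is a simple current, so $\widetilde{V}^{K}$ is a $\Z_{2}$ simple current extension of $L(2m,0)^{K}$.

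With this in hand I would apply Theorem \ref{sc} with base $L(2m,0)^{K}$ and simple current $M:=L(2m,2m)^{(0)}$, and the core of the argument becomes the computation of the fusion action $M\boxtimes_{L(2m,0)^{K}}(-)$ on every irreducible module of Theorem \ref{main1}(2). I would carry this out by lifting to $L(2m,0)$, where the fusion is the classical rule $L(2m,2m)\boxtimes L(2m,i)=L(2m,2m-i)$, and descending to $L(2m,0)^{K}$ by tracking the Klein isotypic labels through the $\Z_{2}$-orbifold fusion rules of Theorem \ref{fusion-aff} and the simple-current group $\{L(2m,0)^{(j)}\}_{j=0}^{3}\cong K$ of Theorem \ref{V decomposition}. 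On the even-index untwisted modules one expects $M\boxtimes L(2m,j)^{(l)}=L(2m,2m-j)^{(l)}$, a fixed point precisely when $j=m$ (yielding the split modules $(L(2m,m)^{(l)})^{\pm}$) and otherwise a two-element orbit pairing $j$ with $2m-j$ (yielding the nonsplit modules $L(2m,j)^{(l)}\oplus L(2m,2m-j)^{(l)}$, with $0\leqslant j\leqslant m-2$ even as representatives); the twisted modules $\overline{L(2m,i)}^{\sigma_{r},\pm}$ and $\overline{L(2m,m)}^{\sigma_{r},(l)}$ are expected to be fixed points, hence split, using the isomorphism $\overline{L(2m,i)}^{\sigma_{r},\pm}\cong\overline{L(2m,2m-i)}^{\sigma_{r},\pm}$ of Lemma \ref{twisted G_M subgroup}. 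Assembling the split and nonsplit modules and reading off the lowest-weight data from Theorem \ref{V decomposition}, Lemma \ref{irr decomposition} and Lemma \ref{twisted decomposition} should produce the explicit list and the total $8m+32$.

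I expect the principal obstacle to lie in the odd-index untwisted sector and the attendant bookkeeping of the Klein isotypic components. For odd $i$ the module $L(2m,i)$ carries the two-dimensional twisted-group-algebra factor $W(i)$ of Lemma \ref{irr decomposition}(1), so the behaviour of $M\boxtimes L(2m,i)^{+}$ does not fit the even-index pattern verbatim, and one must determine precisely how these modules are fixed, paired, or absorbed so that the count closes at exactly $8m+32$ with no omission or duplication. This is the step where the order-two simple current and the Klein orbifold interact most intricately, and I would expect to control it by combining the explicit top-level bases recorded after Lemma \ref{irr decomposition} with the fusion rules of Theorem \ref{fusion-aff}, cross-checking against the independent orbifold count obtained by applying Theorem \ref{DRX17 thm3} to $\widetilde{V}$ and $K$ through the $\sigma_{r}$-twisted sectors.
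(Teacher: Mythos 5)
Your part (1) is exactly the paper's argument (Theorem \ref{jl1} plus Theorem \ref{main1}(2) with $k=2m$) and is fine. In part (2), however, there is a genuine gap, and it sits precisely at the step you defer: the \emph{exclusion} of the odd-index modules is the heart of the paper's proof, and your anticipated answers there are missing in the untwisted case and wrong in the twisted case. For $i$ odd (and $m$ even) the lowest weights of $L(2m,i)^{+}$ and $L(2m,2m-i)^{+}$ differ by $\tfrac{i-m}{2}\in\Z+\tfrac12$, and because $2m+i-(2m-i)=2i\notin 4\Z$ the fusion rules \eqref{fusion.untwist1.} and \eqref{fusion.twist1} carry a sign flip:
\[
L(2m,2m)^{\sigma_1,+}\boxtimes L(2m,i)^{\sigma_1,\pm}=L(2m,2m-i)^{\sigma_1,\mp},\qquad
L(2m,2m)^{\sigma_1,+}\boxtimes \overline{L(2m,i)}^{\sigma_1,\pm}=\overline{L(2m,2m-i)}^{\sigma_1,\mp}.
\]
Combined with the weight-congruence requirement in case (1) of Theorem \ref{sc}, this shows that for odd $i$ these modules admit \emph{no} structure over the extension: they are neither split nor paired, and simply drop out of the list. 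Your proposal instead asserts that the twisted modules $\overline{L(2m,i)}^{\sigma_r,\pm}$ are ``fixed points, hence split,'' citing the isomorphism of Lemma \ref{twisted G_M subgroup}. But that isomorphism holds for \emph{all} $i$, including the odd $i$ with $0\leqslant i\leqslant m-1$ appearing in Theorem \ref{main1}(2), so your criterion would declare those split as well and inflate the count well beyond $8m+32$. The isomorphism class recorded in Lemma \ref{twisted G_M subgroup} does not determine the fusion action: because of the sign flip, fusion with the simple current sends $\overline{L(2m,i)}^{\sigma_r,+}$ to a module isomorphic to $\overline{L(2m,i)}^{\sigma_r,-}$, whose lowest weight differs by $\tfrac12$, so it is not a fixed point and cannot pair nonsplit either.

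There is also a methodological gap in how you propose to compute the fusion action at all. You apply Theorem \ref{sc} with base $L(2m,0)^{K}$ and simple current $L(2m,2m)^{(0)}$, which presupposes fusion rules for the $K$-orbifold and the simple-current property of $L(2m,2m)^{(0)}$ over $L(2m,0)^{K}$; neither is established in the paper, and your appeal to a $\Z_2^3$ group of simple currents is an unproven assertion. The paper avoids this entirely by working one $\Z_2$ at a time: it applies Theorem \ref{sc} to $L(2m,0)^{\langle\sigma_1\rangle}\subset \widetilde{V}^{\sigma_1}=L(2m,0)^{\langle\sigma_1\rangle}\oplus L(2m,2m)^{\langle\sigma_1\rangle}$, where Theorem \ref{fusion-aff} supplies every needed fusion rule, obtains the complete list of irreducible $\widetilde{V}^{\sigma_1}$-modules (nonsplit pairs for even $i$, splits at $i=m$, nothing at all from odd $i$), checks their $\sigma_2$-stability via \eqref{esc} and \eqref{phi 2}, and then performs a second orbifold step by $\sigma_2$ using \cite{DLM00}, \cite{DRX17}, \cite{MT04} to reach $\widetilde{V}^{K}$ and the count $8m+32$; the exclusion of the odd-index $L(2m,0)^{K}$-modules is then confirmed by the same weight argument at the $K$-level. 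To make your one-step approach rigorous you would first have to derive the $K$-orbifold fusion data, which essentially amounts to redoing this two-step argument.
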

\begin{proof}
(1) follows from Theorem \ref{main1} and Theorem \ref{jl1}.
We now prove (2). Since $C_{{L_{\widehat{\mathfrak{so}_m}}(1,0)}^{\otimes 3}}({L_{\widehat{\mathfrak{so}_m}}(3,0)})\cong (L(2m, 0)\oplus L(2m, 2m))^{K}$, it follows that the irreducible modules of $C_{{L_{\widehat{\mathfrak{so}_m}}(1,0)}^{\otimes 3}}({L_{\widehat{\mathfrak{so}_m}}(3,0)})$ occurs in the irreducible $\sigma_r$-twisted  modules of  $L(2m, 0)\oplus L(2m, 2m)$, $0\leqslant r\leqslant 3$.
 Denote
 $$\widetilde{V}^{\sigma_1}=L(2m, 0)^{\langle \sigma_{1} \rangle}\oplus L(2m, 2m)^{\langle \sigma_{1} \rangle}.$$

  Notice that $m\in 2\Z_{+}$, and for $1\leqslant i\leqslant m-1, i\in 2\Z+1$, the lowest weights of the $L(2m,0)^{\langle \sigma_{1} \rangle}$-module $L(2m,i)^{\sigma_1,+}$ is $\frac{i(i+2)}{2(2m+4)}$. Then it is easy to check that
  $$
  \frac{i(i+2)}{2(2m+4)}-\frac{(2m-i)(2m-i+2)}{2(2m+4)}\in \Z+\frac{1}{2}.
  $$
  By (\ref{fusion.untwist1.}) in Theorem \ref{fusion-aff} about  the fusion rules of the vertex operator algebra $L(2m,0)^{\langle \sigma_{1} \rangle}$, we have
  $$
  L(2m,2m)^{\sigma_1,+}\boxtimes L(2m,i)^{\sigma_1,\pm}=L(2m,2m-i)^{\sigma_1,\mp}.
 $$
 Then by Theorem \ref{sc}, for $1\leqslant i\leqslant m-1, i\in 2\Z+1$, $L(2m,i)^{\sigma_1,\pm}\oplus L(2m,2m-i)^{\sigma_1,\mp}$ are not modules of $\widetilde{V}^{\sigma_1}$. Similarly, by (\ref{fusion.twist1}),
 $$
 L(2m,2m)^{\sigma_1,+}\boxtimes \overline{L(2m,i)}^{\sigma_1,\pm}=\overline{L(2m,2m-i)}^{\sigma_1,\mp},
 $$
 for $1\leqslant i\leqslant m-1, i\in 2\Z+1$. Then we also deduce that
 $$
 \overline{L(2m,i)}^{\sigma_1,\pm}\oplus \overline{L(2m,2m-i)}^{\sigma_1,\mp}, \ 1\leqslant i\leqslant m-1, \  i\in 2\Z+1
 $$
 are not modules of $\widetilde{V}^{\sigma_1}$. By Theorem \ref{fusion-aff}, for $0\leqslant i\leqslant m$, $i\in 2\Z$,
 $$
 L(2m,2m)^{\sigma_1,+}\boxtimes L(2m,i)^{\sigma_1,\pm}=L(2m,2m-i)^{\sigma_1,\pm},$$
 $$
  L(2m,2m)^{\sigma_r,+}\boxtimes \overline{L(2m,i)}^{\sigma_r,\pm}=\overline{L(2m,2m-i)}^{\sigma_r,\pm}. \
 $$
 Then we  deduce that
 the ``nonsplit'' type modules
 $$
 L(2m, i)^{\sigma_1,\pm}\oplus L(2m, 2m-i)^{\sigma_1,\pm}, \ \overline{L(2m, i)}^{\sigma_1,\pm}\oplus \overline{L(2m, 2m-i)}^{\sigma_1,\pm},
 $$
 where $ 0\leqslant i\leqslant m-2, \ i\in 2\Z$, and the ``split'' type modules
 $$
 (L(2m, m)^{\sigma_1,+})^{\pm}, \ (L(2m, m)^{\sigma_1,-})^{\pm}, \ (\overline{L(2m, m)}^{\sigma_1,+})^{\pm}, \ (\overline{L(2m, m)}^{\sigma_1,-})^{\pm}
 $$
exhaust all the irreducible modules of $\widetilde{V}^{\sigma_{1}}$. By (\ref{esc}) and (\ref{phi 2}), all the above $\widetilde{V}^{\sigma_1}$-modules are stable under $\sigma_2$. Then by  \cite{DLM00}, \cite{DRX17}, and \cite{MT04} we deduce that $C_{{L_{\widehat{\mathfrak{so}_m}}(1,0)}^{\otimes 3}}({L_{\widehat{\mathfrak{so}_m}}(3,0)}) \cong (L(2m, 0)\oplus L(2m, 2m))^{K}$ has exactly $8m+32$ irreducible modules.

On the other hand, By Theorem \ref{sc},  for $r=1,2,3$, $1\leqslant i\leqslant m-1, i\in 2\Z+1$,
$$
 L(2m,i)^{+}\oplus  L(2m,2m-i)^{+},
 \
 \overline{L(2m,i)}^{\sigma_r,\pm}\oplus \overline{L(2m,2m-i)}^{\sigma_r,\mp},
 $$
can not be modules of the simple extension $L(2m,0)^K\oplus L(2m,2m)^K$. Then by (2) of Theorem \ref{main1} we have (2).

\end{proof}
\begin{rmk} For $m=6$,  we have
$$
C_{{L_{\widehat{\mathfrak{so}_6}}(1,0)}^{\otimes 3}}({L_{\widehat{\mathfrak{so}_6}}(3,0)})\cong (L(12, 0)\oplus L(12, 12))^{K}.
$$
On the other hand, from \cite{JLin} and \cite{Lam14} and the fact that $\frak{so_6}(\C)\cong \frak{sl}_{4}(\C)$, we have
$$C_{{L_{\widehat{\mathfrak{so}_6}}(1,0)}^{\otimes 3}}({L_{\widehat{\mathfrak{so}_6}}(3,0)})\cong K(\frak{sl}_3,4),$$
where $K(\frak{sl}_3, 4)$ is the parafermion vertex operator algebra associated with $\frak{sl}_{3}(\C)$ and the level $4$ \cite{ADJR18}, \cite{DLWY}, \cite{DLY}. Then
we have
$$(L(12, 0)\oplus L(12, 12))^{K}\cong K(\frak{sl}_3,4).$$
By \cite{ADJR18}, $K(\frak{sl}_3, 4)$ has exactly $80$ inequivalent irreducible modules.
\end{rmk}

\end{document}